\newtheorem{theo}{Theorem}[section]
\newtheorem{prop}[theo]{Proposition}
\theoremstyle{definition}
\theoremstyle{remark}
\newtheorem{remark}[theo]{Remark}
\newcommand{\be}{\begin{eqnarray}}
\newcommand{\ee}{\end{eqnarray}}
\newcommand{\bes}{\begin{eqnarray*}}
\newcommand{\ees}{\end{eqnarray*}}
\newcommand{\bi}{\begin{itemize}}
\newcommand{\ei}{\end{itemize}}
\newcommand{\ben}{\begin{enumerate}}
\newcommand{\een}{\end{enumerate}}
\newcommand{\La}{\mathcal{L}}
\newcommand{\D}{\mathcal{D}}
\newcommand{\B}{\mathcal{B}^{s,t}_\Omega}
\newcommand{\Bn}{\mathcal{B}^{s,t}_{\Omega_n}}
\newcommand{\BT}{\mathcal{B}^{s,t}_{\mathcal{T}}}
\newcommand{\Hcal}{\mathcal{H}}
\newcommand{\R}{\mathbb{R}}
\newcommand{\N}{\mathbb{N}}
\newcommand{\C}{\mathbb{C}}
\newcommand{\G}{\mathcal{G}}
\newcommand{\T}{\mathcal{T}}
\newcommand{\Ncal}{\mathcal{N}_{2-2s}^K}
\newcommand{\de}{\mathrm {d}}
\def\Ext{{\hbox{\rm Ext}}}
\def\einschr{\hbox{\kern1pt\vrule height 6pt\vrule  width6pt height 0.4pt depth0pt\kern1pt}}
\newcommand{\Lm}{\mathfrak{L}}
\DeclareMathOperator{\supp}{supp}
\newcommand{\uu}{u|_{\partial \Omega}}
\newcommand{\vv}{v|_{\partial \Omega}}
\title{\bf Dynamic boundary conditions for time dependent fractional operators on extension domains  }
\date{}
\begin{document}
\maketitle

\centerline{\scshape Simone Creo and Maria Rosaria Lancia}
\medskip
{\footnotesize

 \centerline{Dipartimento di Scienze di Base e Applicate per l'Ingegneria, Sapienza Universit\`{a} di Roma,
}
   \centerline{via Antonio Scarpa 16, 00161 Roma, Italy.}
   \centerline{E-mail: simone.creo@uniroma1.it,\quad maria.lancia@sbai.uniroma1.it}
}

\vspace{1cm}

\begin{abstract}
\noindent We consider a parabolic semilinear non-autonomous problem $(\tilde P)$ for a fractional time dependent operator $\mathcal{B}^{s,t}_\Omega$ with Wentzell-type boundary conditions in a possibly non-smooth domain $\Omega\subset\mathbb{R}^N$. We prove existence and uniqueness of the mild solution of the associated semilinear abstract Cauchy problem $(P)$ via an evolution family $U(t,\tau)$. We then prove that the mild solution of the abstract problem $(P)$ actually solves problem $(\tilde P)$ via a generalized fractional Green formula.
\end{abstract}

\medskip

\noindent\textbf{Keywords:} Extension domains, fractional operators, non-autonomous energy forms, evolution operators, semilinear parabolic equations, ultracontractivity.\\

\noindent{\textbf{2010 Mathematics Subject Classification:} Primary: 35R11, 47D06. Secondary: 35K90, 35K58, 28A80.}

\bigskip

\section*{Introduction}
\setcounter{equation}{0}

In this paper we consider a parabolic semilinear boundary value problem with dynamic boundary conditions for a generalized time dependent fractional operator in an extension domain $\Omega\subset\R^N$ having as boundary a $d$-set (we refer the reader to Section \ref{geometria} for the definitions). Problems of this type are also known as Wentzell-type problems. The problem is formally stated as follows:
\begin{equation}\notag
(\tilde P)\begin{cases}
\frac{\partial u}{\partial t}(t,x)+\B u(t,x)=J(u(t,x)) &\text{in $[0,T]\times\Omega$,}\\[2mm]
\frac{\partial u}{\partial t}(t,x)+C_s\Ncal u(t,x)+b(t,x)u(t,x)+\Theta^t_\alpha (u(t,x))=J(u(t,x))\, &\text{on $[0,T]\times\partial\Omega$},\\[2mm]
u(0,x)=\phi(x) &\text{in $\overline\Omega$},
\end{cases}
\end{equation}
where $0<s<1$, $\alpha$ is defined in \eqref{definizione alpha}, $\B$ and $\Theta^t_\alpha$ denote generalized time dependent fractional operators on $\Omega$ and $\partial\Omega$ (see \eqref{fracreglap} and \eqref{nonlocal-op.}) respectively, $T$ is a fixed positive number, $b$ is a suitable function depending also on $t$ which satisfies hypotheses \eqref{ipotesi b}, $\Ncal$ is the fractional conormal derivative defined in Theorem \ref{greenf}, $\phi$ is a given datum in a suitable functional space and $J$ is a mapping from $L^{2p}(\Omega,m)$ to $L^2(\Omega,m)$, for $p>1$, locally Lipschitz on bounded sets in $L^{2p}(\Omega,m)$ (see condition \eqref{LIPJ}), where $m$ is the measure defined in \eqref{defmisura}. We remark that $\B$ is a time dependent generalization of the regional fractional Laplacian $(-\Delta)^s_\Omega$ and $\Theta^t_\alpha$ plays the role of a regional fractional Laplacian of order $\alpha\in (0,1)$ on $\partial\Omega$ (see Section \ref{sezgreen}).

We approach this problem by proving that there exists a unique evolution family associated with the non-autonomous energy form $E[t,u]$ defined in \eqref{frattale}. More precisely, after introducing the energy form $E[t,u]$, we consider the following abstract Cauchy problem $(P)$ (see also \eqref{eq:5.1}):
\begin{equation}\notag
(P)\begin{cases}
\frac{\partial u(t)}{\partial t}=A(t)u(t)+J(u(t))\quad\text{for $t\in[0,T]$},\\
u(0)=\phi,
\end{cases}
\end{equation}
where $A(t)\colon D(A(t))\subset L^2(\Omega,m)\to L^2(\Omega,m)$ is the family of operators associated to $E[t,u]$.

Crucial tools for proving existence and uniqueness of the (mild) solution of the non-autonomous abstract Cauchy problem $(P)$ are a fractional version of the Nash inequality on $L^2(\Omega,m)$, which in turn allows us to prove the ultracontractivity of the evolution family $U(t,\tau)$ (see Theorem \ref{ultracontr}), and a contraction argument in suitable Banach spaces. A generalized fractional Green formula, proved in Section \ref{sezgreen}, then allows us to deduce that the mild solution of problem $(P)$ actually solves problem $(\tilde P)$ in a suitable weak sense, see Theorem \ref{esistfrattale}.

The literature on boundary value problems with dynamic boundary conditions in smooth domains is huge: we refer to \cite{AP-NAsurv,F-G-G-Ro,goldstein} and the references listed in. On the contrary, the study of Wentzell problems in extension domains (in particular with fractal boundaries and/or interfaces) is more recent; among the others, we refer to \cite{La-Ve3,JEE,CLNpar,JEEfraz,creoZAA}.

The study of autonomous semilinear problems in extension domains with Wentzell-type boundary conditions for fractional operators is a rather recent topic. We refer to \cite{JEEfraz} for the linear case and to \cite{CLVmosco,CLNODEA,CLVtrasm} for the case $p\geq 2$. The literature on fractional operators is huge since they mathematically describe the so-called anomalous diffusion. This topic appears also in finance and probability. We refer to the papers \cite{Ab-Th,Ja,MeMiMo,schneider,CSF,valdinoci,mandelbrot}, which deal with models describing such diffusion.

On the other side, to consider the corresponding non-autonomous problems allows to tackle more realistic problems, and it is indeed a challenging task. To our knowledge, the first results on non-autonomous semilinear Wentzell problems for the Laplace operator in irregular domains are contained in \cite{LVnonaut}.

When investigating semilinear problems, both autonomous and non-autonomous, the functional setting is given by an interpolation space between the domain of the generator $A(t)$ and $L^2(\Omega,m)$ or the domain of a fractional power of $A(t)$. In the case of extension domains, possibly with fractal boundary, the domain of $A(t)$ is unknown. Our aim here is to extend to the fractional non-autonomous case the ideas and methods of \cite{LVnonaut} and \cite{Daners} under suitable hypotheses on $J(u)$. In order to use a fixed point argument in Banach spaces, a crucial tool is to prove suitable mapping properties for $J(u)$, which in turn deeply rely on the ultracontractivity of the evolution family. We stress the fact that the techniques used in \cite{LVnonaut} to prove the ultracontractivity property cannot be applied to the present case, since the non-autonomous form $E[t,u]$ is nonlocal. Here the ultracontractivity property is obtained by an abstract argument which deeply relies on a fractional Nash inequality on $L^2(\Omega,m)$.
When giving the strong interpretation of problem $(P)$, this functional setting allows us to prove that the unknown $u$ satisfies a dynamic boundary condition on $\partial\Omega$, whereas it was not possible to achieve it in \cite{LVnonaut}, due to the presence of the \lq\lq fractal Laplacian" on the boundary. 

\medskip

The paper is organized as follows.
In Section \ref{preliminari} we introduce the geometry and the functional setting and we recall important general results, such that trace theorems, Sobolev-type embeddings for extension domains and Nash inequality (see Proposition \ref{Nash}).\\
In Section \ref{sezgreen} we introduce the time dependent operator $\B$ which governs the diffusion in the bulk and we introduce the notion of fractional conormal derivative $\Ncal$ via a generalized fractional Green formula (see Theorem \ref{greenf}).\\
In Section \ref{sec3} we introduce the nonlocal operator $\Theta^t_\alpha$ acting on $\partial\Omega$ and the non-autonomous energy form $E[t,u]$, we prove its properties and that there exists a unique evolution family $U(t,\tau)$ associated to $E[t,u]$.\\
In Section \ref{sec4} we prove some regularity properties of the evolution family, in particular its ultracontractivity (see Theorem \ref{ultracontr}).\\
In Section \ref{sec5} we consider the abstract Cauchy problem $(P)$ and we prove that it admits a unique local (mild) solution. We then prove that the unique solution is also global in time under suitable assumptions on the initial datum. Finally, we prove that the unique mild solution of $(P)$ solves in a suitable weak sense problem $(\tilde P)$.

\section{Preliminaries}\label{preliminari}
\setcounter{equation}{0}

\subsection{Functional spaces}\label{spazi funzionali}

Let $\G$ (resp. $\mathcal{S}$) be an open (resp. closed) set of $\R^N$.
By $L^p(\G)$, for $p\geq 1$, we denote the Lebesgue space with respect to the Lebesgue measure $\de\La_N$, which will be left to the context whenever that does not create ambiguity. By $L^p(\partial\G)$ we denote the Lebesgue space on $\partial\G$ with respect to a Hausdorff measure $\mu$ supported on $\partial \G$. By $\D(\G)$ we denote the space of infinitely differentiable functions with compact support on $\G$. By $C(\mathcal{S})$ we denote the space of continuous functions on $\mathcal{S}$ and by $C^{0,\vartheta}(\mathcal{S})$ we denote the space of H\"older continuous functions on $\mathcal{S}$ of order $0<\vartheta<1$.\\
By $H^s(\G)$, where $0<s<1$, we denote the fractional Sobolev space of exponent $s$. Endowed with the norm
\begin{equation*}
\|u\|^2_{H^s(\G)}=\|u\|^2_{L^2(\G)}+\iint_{\G\times\G} \frac{(u(x)-u(y))^2}{|x-y|^{N+2s}}\,\de\La_N(x)\de\La_N(y),
\end{equation*}
it becomes a Banach space. We denote by $|u|_{H^s(\G)}$ the seminorm associated to $\|u\|_{H^s(\G)}$ and by $(u,v)_{H^s(\G)}$ the scalar product induced by the $H^s$-norm. Moreover, we set
\begin{equation}\notag
(u,v)_s:=\iint_{\G\times\G}\frac{(u(x)-u(y))(v(x)-v(y))}{|x-y|^{N+2s}}\,\de\La_N(x)\de\La_N(y).
\end{equation}

In the following we will denote by $|A|$ the Lebesgue measure of a subset $A\subset\R^N$.
For $f\in H^{s}(\G)$, we define the trace operator $\gamma_0$ as
\begin{equation}\notag 
\gamma_0f(x):=\lim_{r\to 0}{1\over|B(x,r)\cap\G|}\int_{B(x,r)\cap\G}f(y)\,\de\La_N(y)
\end{equation}
at every point $x\in \overline{\G}$ where the limit exists. The above limit exists at quasi every $x\in \overline{\G}$ with respect to the $(s,2)$-capacity (see Definition 2.2.4 and Theorem 6.2.1 page 159 in \cite{AdHei}). From now on, we denote the trace operator simply by $f|_{\G}$; sometimes we will omit the trace symbol and the interpretation will be left to the context.
Moreover, we denote by $\Lm(X\to Y)$ the space of linear and continuous operators from a Banach space $X$ to a Banach space $Y$. If $X=Y$, we simply denote this space by $\Lm(X)$.

\noindent Throughout the paper, $C$ denotes possibly different constants. We give the dependence of constants on some parameters in parentheses. 

\subsection{$(\varepsilon,\delta)$ domains and trace theorems}\label{geometria}

We recall the definition of $(\varepsilon,\delta)$ domains. For details see \cite{Jones}.
\begin{definition}\label{defepsdelta} Let $\mathcal{F}\subset\R^N$ be open and connected. For $x\in\mathcal{F}$, let $\displaystyle d(x):=\inf_{y\in\mathcal{F}^c}|x-y|$. We say that $\mathcal{F}$ is an $(\varepsilon,\delta)$ domain if, whenever $x,y\in\mathcal{F}$ with $|x-y|<\delta$, there exists a rectifiable arc $\gamma\in\mathcal{F}$ of length $\ell(\gamma)$ joining $x$ to $y$ such that
\begin{center}
$\displaystyle\ell(\gamma)\leq\frac{1}{\varepsilon}|x-y|\quad$ and\quad $\displaystyle d(z)\geq\frac{\varepsilon|x-z||y-z|}{|x-y|}$ for every $z\in\gamma$.
\end{center}
\end{definition}

\medskip

\noindent We now recall the definition of $d$-set, referring to \cite{JoWa} for a complete discussion.
\begin{definition}\label{dset}
A closed nonempty set $\mathcal{S}\subset\R^N$ is a $d$-set (for $0<d\leq N$) if there exist a Borel measure $\mu$ with $\supp\mu=\mathcal{S}$ and two positive constants $c_1$ and $c_2$ such that
\begin{equation}\label{defindset}
c_1r^{d}\leq \mu(B(x,r)\cap\mathcal{S})\leq c_2 r^{d}\quad\text{for every }x \in\mathcal{S}.
\end{equation}
The measure $\mu$ is called a $d$-measure.
\end{definition}

\medskip

\noindent In this paper, we consider two particular classes of $(\varepsilon,\delta)$ domains $\Omega\subset\R^N$. More precisely, $\Omega$ can be a $(\varepsilon,\delta)$ domain having as boundary either a $d$-set or an arbitrary closed set in the sense of \cite{jonsson91}. For the sake of simplicity, from now on we restrict ourselves to the case in which $\partial\Omega$ is a $d$-set.


We suppose that $\Omega$ can be approximated by a sequence $\{\Omega_n\}$ of domains such that, for every $n\in\N$,
\begin{equation*}
(\Hcal)\begin{cases}
\Omega_n\text{ is bounded and Lipschitz;}\\[2mm]
\Omega_n\subseteq \Omega_{n+1};\\[2mm]
\Omega=\displaystyle\bigcup_{n=1}^\infty \Omega_n.
\end{cases}
\end{equation*}

\noindent The reader is referred to \cite{JEEfraz} and \cite{CLVmosco} for examples of such domains.


\bigskip

We recall the definition of Besov space specialized to our case. For generalities on Besov spaces, we refer to \cite{JoWa}.
\begin{definition}
Let $\mathcal{F}$ be a $d$-set with respect to a $d$-measure $\mu$ and $0<\alpha<1$. ${B^{2,2}_\alpha(\mathcal{F})}$ is the space of functions for which the following norm is finite,
$$
\|u\|^2_{B^{2,2}_\alpha(\mathcal{F})}=\|u\|^2_{L^2(\mathcal{F})}+\iint_{|x-y|<1}\frac{|u(x)-u(y)|^2}{|x-y|^{d+2\alpha}}\,\de\mu(x)\,\de\mu(y).
$$
\end{definition}

\noindent In the following, we will denote the dual of the Besov space $B^{2,2}_\alpha(\mathcal{F})$ with $(B^{2,2}_\alpha(\mathcal{F}))'$; we point out that this space coincides with the space $B^{2,2}_{-\alpha}(\mathcal{F})$ (see \cite{JoWa2}).

From now on, let 
\begin{equation}\label{definizione alpha}
\alpha:=s-\frac{N-d}{2}\in (0,1).
\end{equation}
We now state the trace theorem for functions in $H^s(\Omega)$, where $\Omega$ is a bounded $(\varepsilon,\delta)$ domain with boundary $\partial\Omega$ a $d$-set. For the proof, we refer to \cite[Theorem 1, Chapter VII]{JoWa}.

\begin{prop}\label{teotraccia} Let $\frac{N-d}{2}<s<1$ and $\alpha$ be as in \eqref{definizione alpha}. $B^{2,2}_\alpha(\partial\Omega)$ is the trace space of $H^{s}(\Omega)$ in the following sense:
\begin{enumerate}
\item[(i)] $\gamma_0$ is a continuous linear operator from $H^s(\Omega)$ to $B^{2,2}_\alpha(\partial\Omega)$;
\item[(ii)] there exists a continuous linear operator $\Ext$ from $B^{2,2}_\alpha(\partial\Omega)$ to $H^{s}(\Omega)$ such that $\gamma_0\circ \Ext$ is the identity operator in $B^{2,2}_\alpha(\partial\Omega)$.
\end{enumerate}
\end{prop}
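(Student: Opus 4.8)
The plan is to follow the classical trace theory of Jonsson and Wallin \cite{JoWa}, reducing everything to the whole space $\R^N$ by means of Jones' extension theorem \cite{Jones}, which applies since $\Omega$ is a bounded $(\varepsilon,\delta)$ domain. Note that the two standing hypotheses enter exactly to make the statement meaningful and the construction work: $s>\frac{N-d}{2}$ yields $\alpha>0$ and, together with $d\le N$, also $\alpha=s-\frac{N-d}{2}\le s<1$, so $B^{2,2}_\alpha(\partial\Omega)$ is a genuine fractional Besov space; moreover $2s>N-d$ will guarantee that the pointwise trace is well defined $\mu$-almost everywhere. First I would fix a bounded extension operator $E_0\colon H^s(\Omega)\to H^s(\R^N)$ (Jones); since $E_0f=f$ on $\Omega$ and $(\varepsilon,\delta)$ domains satisfy an interior corkscrew/plumpness condition, at $\mu$-a.e.\ point $x\in\partial\Omega$ (namely at the full-ball Lebesgue points of $E_0f$, which form a set of full $\mu$-measure by quasi-continuity and the capacity estimate below) the average $\frac{1}{|B(x,r)\cap\Omega|}\int_{B(x,r)\cap\Omega}E_0f\,\de\La_N$ has the same limit as the full average $\frac{1}{|B(x,r)|}\int_{B(x,r)}E_0f\,\de\La_N$, so the trace $\gamma_0 f$ in the sense of the paper coincides $\mu$-a.e.\ on $\partial\Omega$ with the full-ball trace of $E_0f$. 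Thus part (i) reduces to the global statement: for a $d$-set $\mathcal{S}$ with $d$-measure $\mu$ and $\frac{N-d}{2}<s<1$, the map $f\mapsto\gamma_0 f$ is bounded from $H^s(\R^N)$ into $B^{2,2}_\alpha(\mathcal{S})$.

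To prove this global statement I would argue as follows. The limit $\lim_{r\to0}\frac{1}{|B(x,r)|}\int_{B(x,r)}f\,\de\La_N$ exists quasi-everywhere with respect to the $(s,2)$-capacity; since sets of null $(s,2)$-capacity have Hausdorff dimension at most $N-2s<d$ they are $\mu$-null, so $\gamma_0 f$ is defined $\mu$-a.e. The bound $\int_{\mathcal{S}}|\gamma_0 f|^2\,\de\mu\le C\|f\|_{H^s(\R^N)}^2$ follows from the upper mass bound in \eqref{defindset} together with a maximal-function estimate. The estimate of the Besov seminorm is the technical core: one writes $\gamma_0 f(x)-\gamma_0 f(y)$ as a telescoping sum of increments of averages of $f$ over a dyadic chain of balls joining $x$ to $y$, and then estimates $\iint_{|x-y|<1}\frac{|\gamma_0 f(x)-\gamma_0 f(y)|^2}{|x-y|^{d+2\alpha}}\,\de\mu(x)\,\de\mu(y)$ by grouping the pairs with $|x-y|\sim 2^{-j}$, using \eqref{defindset} to count them and the Gagliardo seminorm $|f|_{H^s(\R^N)}$ to control the average oscillations; the exponent $\alpha=s-\frac{N-d}{2}$ is precisely the one making the resulting geometric series in $j$ summable.

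For part (ii) I would build the Whitney extension of Jonsson--Wallin. Let $\{Q_k\}$ be a Whitney decomposition of $\R^N\setminus\partial\Omega$, let $\{\varphi_k\}$ be a subordinate partition of unity with the usual derivative bounds, and for each $k$ choose a ball $B_k$ centred at a point of $\partial\Omega$ nearest to $Q_k$ with radius comparable to $\diam Q_k$; given $g\in B^{2,2}_\alpha(\partial\Omega)$, set
\[
(\Ext g)(x):=\begin{cases}\displaystyle\sum_k\varphi_k(x)\,\frac{1}{\mu(B_k\cap\partial\Omega)}\int_{B_k\cap\partial\Omega}g\,\de\mu, & x\in\Omega,\\[2mm] g(x), & x\in\partial\Omega.\end{cases}
\]
The bound $\|\Ext g\|_{H^s(\R^N)}\le C\|g\|_{B^{2,2}_\alpha(\partial\Omega)}$, and a fortiori $\|\Ext g\|_{H^s(\Omega)}\le C\|g\|_{B^{2,2}_\alpha(\partial\Omega)}$, is obtained by splitting the double Gagliardo integral of $\Ext g$ over $\R^N\times\R^N$ according to the positions of $x$ and $y$ relative to $\partial\Omega$ and to the Whitney cubes containing them: the near-boundary and near-diagonal pieces are controlled by the Besov seminorm of $g$ through the geometry of the Whitney cubes and \eqref{defindset}, while the remaining (far) pieces are dominated by $\|g\|_{L^2(\partial\Omega,\mu)}$. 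To conclude that $\gamma_0\circ\Ext$ is the identity on $B^{2,2}_\alpha(\partial\Omega)$, I would check that $\mu$-a.e.\ point $x\in\partial\Omega$ is a Lebesgue point of $\Ext g$ with $\lim_{r\to0}\frac{1}{|B(x,r)|}\int_{B(x,r)}\Ext g\,\de\La_N=g(x)$ — this uses the averaging construction together with the Besov regularity of $g$ — and then, exactly as in part (i), transfer this to the $\Omega$-averages defining $\gamma_0$ via the interior density bound of the $(\varepsilon,\delta)$ domain.

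The two genuinely delicate points are the chaining estimate for the Besov seminorm in part (i) and the matching splitting estimate for the Whitney extension in part (ii); the reduction via Jones' extension theorem and the identification of the two notions of boundary average are comparatively routine.
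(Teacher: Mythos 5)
Your outline is a faithful reconstruction of the argument the paper relies on: the paper gives no proof of its own but cites \cite[Theorem 1, Chapter VII]{JoWa}, and the Jonsson--Wallin proof is exactly the combination you describe (reduction to $H^s(\R^N)$ by the extension property of $(\varepsilon,\delta)$ domains, a dyadic chaining estimate for the Besov seminorm of the trace, and the averaged Whitney extension as right inverse). So the proposal is correct and takes essentially the same route as the paper's reference.
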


\noindent We point out that, if $\Omega\subset\R^N$ is a Lipschitz domain, its boundary $\partial\Omega$ is a $(N-1)$-set. Hence, the trace space of $H^s(\Omega)$ is $B^{2,2}_{s-\frac{1}{2}}(\partial\Omega)$, and the latter space coincides with $H^{s-\frac{1}{2}}(\partial\Omega)$.

The following result provides us with an equivalent norm on $H^s(\Omega)$. The proof can be achieved by adapting the proof of \cite[Theorem 2.3]{warmaCPAA}.
\begin{theorem}\label{equivalenza norme} Let $\Omega\subset\R^N$ be a $(\varepsilon,\delta)$ domain having as boundary a $d$-set, and let $\frac{N-d}{2}<s<1$. Then there exists a positive constant $C=C(\Omega,N,s,d)$ such that for every $u\in H^s(\Omega)$
\begin{equation}\label{norma eq}
\int_\Omega |u|^2\,\de\La_N\leq C\left(\frac{C_{N,s}}{2}\iint_{\Omega\times\Omega} \frac{|u(x)-u(y)|^2}{|x-y|^{N+2s}}\,\de\La_N(x)\de\La_N(y)+\int_{\partial\Omega}|u|^2\,\de\mu\right).
\end{equation}
\end{theorem}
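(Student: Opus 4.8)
The plan is to argue by contradiction, combining three facts: the compact embedding $H^s(\Omega)\hookrightarrow\hookrightarrow L^2(\Omega)$, valid for a bounded extension domain and recalled in Section \ref{preliminari}; the trace theorem, Proposition \ref{teotraccia}, which requires precisely $\frac{N-d}{2}<s<1$; and the elementary fact that the $d$-measure $\mu$ satisfies $\mu(\partial\Omega)>0$, since $\supp\mu=\partial\Omega\neq\emptyset$ (indeed $\mu(B(x,r)\cap\partial\Omega)\ge c_1 r^d$ for $x\in\partial\Omega$ by \eqref{defindset}). The factor $\tfrac{C_{N,s}}{2}$ plays no role and will be absorbed into $C$.

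Suppose \eqref{norma eq} fails for every constant $C$. Then there is a sequence $(u_n)_n\subset H^s(\Omega)$ with $\int_\Omega|u_n|^2\,\de\La_N=1$ for all $n$ and
\[
\frac{C_{N,s}}{2}\iint_{\Omega\times\Omega}\frac{|u_n(x)-u_n(y)|^2}{|x-y|^{N+2s}}\,\de\La_N(x)\,\de\La_N(y)+\int_{\partial\Omega}|u_n|^2\,\de\mu\;\le\;\frac1n .
\]
In particular $(u_n)_n$ is bounded in $H^s(\Omega)$, while $|u_n|_{H^s(\Omega)}\to0$ and $\int_{\partial\Omega}|u_n|^2\,\de\mu\to0$. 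By the compact embedding, along a subsequence $u_n\to u$ in $L^2(\Omega)$ and, passing to a further subsequence, $\La_N$-a.e.\ in $\Omega$; Fatou's lemma applied to the double integral yields $|u|_{H^s(\Omega)}\le\liminf_n|u_n|_{H^s(\Omega)}=0$. Hence $u(x)=u(y)$ for $(\La_N\otimes\La_N)$-a.e.\ $(x,y)\in\Omega\times\Omega$ and, since an $(\varepsilon,\delta)$ domain is connected, $u$ equals a constant $c$ a.e.\ in $\Omega$; passing to the limit in $\int_\Omega|u_n|^2\,\de\La_N=1$ gives $|c|^2|\Omega|=1$, so $c\neq0$. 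Finally $u_n\to c$ in $H^s(\Omega)$, since $\|u_n-c\|_{H^s(\Omega)}^2=\|u_n-c\|_{L^2(\Omega)}^2+|u_n|_{H^s(\Omega)}^2\to0$.

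It remains to reach a contradiction on $\partial\Omega$. By Proposition \ref{teotraccia}(i) the trace $\gamma_0\colon H^s(\Omega)\to B^{2,2}_\alpha(\partial\Omega)$ is continuous, and $B^{2,2}_\alpha(\partial\Omega)$ embeds continuously into $L^2(\partial\Omega,\mu)$; therefore $\gamma_0 u_n\to\gamma_0 u$ in $L^2(\partial\Omega,\mu)$, while $\int_{\partial\Omega}|u_n|^2\,\de\mu=\|\gamma_0 u_n\|_{L^2(\partial\Omega,\mu)}^2\to0$ forces $\gamma_0 u=0$ $\mu$-a.e.\ on $\partial\Omega$. On the other hand, since $\Omega$ is open, for every $x\in\partial\Omega$ and every $r>0$ the set $B(x,r)\cap\Omega$ is nonempty and open, so the average of the constant $c$ over it equals $c$; thus $\gamma_0 u(x)=\gamma_0 c(x)=c$ for every $x\in\overline\Omega$, in particular $\mu$-a.e.\ on $\partial\Omega$. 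Comparing the two identities gives $|c|^2\mu(\partial\Omega)=0$, and $\mu(\partial\Omega)>0$ forces $c=0$, contradicting $|c|^2|\Omega|=1$. This proves \eqref{norma eq}, with $C$ depending on $\Omega,N,s,d$ only through the compactness and trace constants. The only genuinely non-routine point is this last step, where the $d$-set hypothesis on $\partial\Omega$ is essential (the inequality is false as soon as $\mu(\partial\Omega)=0$, e.g.\ for $u$ constant): one must use both that $\mu$ charges $\partial\Omega$ and that the trace of a constant is that same constant, the latter being read off from the pointwise averaging definition of $\gamma_0$ given in Section \ref{spazi funzionali}.
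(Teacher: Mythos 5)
Your argument is correct and is essentially the proof the paper invokes (it refers to an adaptation of Warma's Theorem 2.3, which is exactly this contradiction--compactness scheme): normalize a sequence violating the inequality, use compactness and the vanishing Gagliardo seminorm to produce a nonzero constant limit, and contradict the vanishing boundary term via the trace and $\mu(\partial\Omega)>0$. The only caveat is that the compact embedding $H^s(\Omega)\hookrightarrow\hookrightarrow L^2(\Omega)$ is not literally recalled in Section \ref{preliminari}; it holds because $\Omega$ is a \emph{bounded} $H^s$-extension domain (Theorem \ref{teo estensione} plus the Rellich-type theorem in $\R^N$), and this boundedness, implicit in the paper's setting, is also what justifies your step $|c|^2|\Omega|=1$.
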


\noindent Here, $C_{N,s}$ is the positive constant defined in Section \ref{sezgreen}. Hence, from Theorem \ref{equivalenza norme} and Proposition \ref{teotraccia}, the following norm is equivalent to the \lq\lq usual" $H^s(\Omega)$-norm:
\begin{equation}\notag
|||u|||^2_{H^s(\Omega)}:=\frac{C_{N,s}}{2}\iint_{\Omega\times\Omega} \frac{|u(x)-u(y)|^2}{|x-y|^{N+2s}}\,\de\La_N(x)\de\La_N(y)+\int_{\partial\Omega}|u|^2\,\de\mu.
\end{equation}

\noindent Finally, we recall the following important extension property which holds for $(\varepsilon,\delta)$ domains having as boundary a $d$-set. For details, we refer to Theorem 1, page 103 and Theorem 3, page 155 in \cite {JoWa}.
\begin{theorem}\label{teo estensione} Let $0<s<1$. There exists a linear extension operator $\mathcal{E}{\rm xt}\colon\,H^s(\Omega)\to\,H^{s}(\R^N)$ such that
\begin{equation}\label{R-3d}
\|\mathcal{E}{\rm xt}\, w\|^2_{H^{s}(\R^N)}\leq C\|w\|^2_{H^{s}(\Omega)},
\end{equation}
where $C$ is a positive constant depending on $s$, where $\mathcal{E}{\rm xt}\, w=w$ on $\Omega$.
\end{theorem}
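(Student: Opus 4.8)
The final statement to prove is Theorem \ref{teo estensione}, the existence of a bounded linear extension operator $\mathcal{E}\mathrm{xt}\colon H^s(\Omega)\to H^s(\mathbb{R}^N)$ for an $(\varepsilon,\delta)$ domain $\Omega$ with $d$-set boundary. The paper explicitly defers to Jones's theorem and to Jonsson--Wallin, so the "proof" here should really be a short bridge to those references plus the observation that for $(\varepsilon,\delta)$ domains no hypothesis on the boundary is actually needed. Let me sketch what I'd write.

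The plan is to invoke Jones's extension theorem directly. First I would recall that Jones constructed, for any $(\varepsilon,\delta)$ domain $\mathcal{F}\subset\mathbb{R}^N$, a bounded linear extension operator $W^{k,p}(\mathcal{F})\to W^{k,p}(\mathbb{R}^N)$ for all $k\in\mathbb{N}$, $1\le p\le\infty$, with operator norm depending only on $\varepsilon,\delta,N,k,p$ (and on $\diam\Omega$ when $\delta<\infty$, which is harmless since here $\Omega$ is bounded). This construction proceeds via a Whitney decomposition of the complement, a reflection of Whitney cubes of $\mathcal{F}^c$ to Whitney cubes of $\mathcal{F}$, and polynomial averaging, and it does not require any regularity of $\partial\mathcal{F}$ whatsoever.

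Second, I would pass from integer-order to the fractional order $s\in(0,1)$ by real interpolation: since $H^s(\mathbb{R}^N)=(L^2(\mathbb{R}^N),H^1(\mathbb{R}^N))_{s,2}$ and the same interpolation identity holds on $\Omega$ — this is precisely where the $(\varepsilon,\delta)$ property enters, because it guarantees $W^{1,2}(\Omega)$ is an interpolation-compatible space via the simultaneous extension operator — Jones's operator, being bounded $L^2(\Omega)\to L^2(\mathbb{R}^N)$ and $H^1(\Omega)\to H^1(\mathbb{R}^N)$ simultaneously, interpolates to a bounded operator $H^s(\Omega)\to H^s(\mathbb{R}^N)$. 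The estimate \eqref{R-3d} follows with $C=C(s,\varepsilon,\delta,N,\diam\Omega)$, and one checks $\mathcal{E}\mathrm{xt}\,w=w$ on $\Omega$ from the construction. Alternatively, and this is the route Jonsson--Wallin take, one shows directly that Jones's operator respects the Gagliardo seminorm by estimating $\iint |E w(x)-Ew(y)|^2|x-y|^{-N-2s}$ over the three regions $\Omega\times\Omega$, $\Omega^c\times\Omega^c$, $\Omega\times\Omega^c$, using the geometry of the reflected Whitney cubes; for the mixed region one exploits that a boundary point can be approached by a chain of cubes whose sidelengths are comparable to their distance from $\partial\Omega$. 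I would simply cite \cite[Theorem 1, p.~103]{JoWa} for this and note that the $d$-set hypothesis on $\partial\Omega$ plays no role in the extension itself (it is needed only for the trace theorem, Proposition \ref{teotraccia}).

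The only genuine subtlety — the "main obstacle" if one were to write it out in full — is the mixed term $\Omega\times\Omega^c$ in the Gagliardo double integral, where one must control the interaction between the genuine values of $w$ and its reflected/averaged values across the rough boundary; this is the technical heart of both Jones's and Jonsson--Wallin's arguments and is exactly why one wants to quote it rather than reproduce it. Everything else is bookkeeping: boundedness on $L^2$ is immediate from the local finite-overlap of reflected cubes, and the near-diagonal part of the seminorm on each side is handled by standard Whitney-cube estimates. So my proposed "proof" is essentially two sentences: apply Jones's theorem to obtain the extension on $\Omega$ (valid for $(\varepsilon,\delta)$ domains with no boundary assumption), and read off \eqref{R-3d} from \cite[Theorem 1, p.~103 and Theorem 3, p.~155]{JoWa}, the latter reference giving the fractional-order version; conclude that $\mathcal{E}\mathrm{xt}\,w=w$ on $\Omega$ by construction.
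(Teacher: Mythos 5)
You end up exactly where the paper does: the paper gives no proof of Theorem \ref{teo estensione}, it simply quotes \cite[Theorem 1, p.~103 and Theorem 3, p.~155]{JoWa}, and your closing sentences do the same, so in substance this is the paper's approach, and your sketch of the Jonsson--Wallin/Whitney argument (with the mixed region $\Omega\times\Omega^c$ of the Gagliardo double integral as the technical core) is accurate. One caveat on the interpolation bridge you offer as the primary route: interpolating Jones's operator between $L^2$ and $H^1$ gives boundedness from $(L^2(\Omega),H^1(\Omega))_{s,2}$ into $H^s(\mathbb{R}^N)$, and a simultaneous extension for that pair identifies this interpolation space with the restriction space $H^s(\mathbb{R}^N)|_\Omega$, \emph{not} with the intrinsically defined Gagliardo space $H^s(\Omega)$ used throughout the paper; the missing embedding $H^s(\Omega)\hookrightarrow (L^2(\Omega),H^1(\Omega))_{s,2}$ on a rough domain is essentially equivalent to the theorem itself (on $\mathbb{R}^N$ one proves it by estimating the modulus of smoothness along segments, which is unavailable here), so that route is circular as stated. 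Rely on the direct construction you cite — which is precisely what the paper does — rather than on the interpolation shortcut.
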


\medskip

\noindent Domains $\Omega$ satisfying property \eqref{R-3d} are the so-called \emph{$H^s$-extension domains}.

\subsection{Sobolev embeddings and the Nash inequality}\label{sobolev embed}

We now recall some important Sobolev-type embeddings for the fractional Sobolev space $H^s(\Omega)$ where $\Omega$ is a $H^{s}$-extension domain with boundary a $d$-set, see \cite[Theorem 6.7]{hitch} and \cite[Lemma 1, p. 214]{JoWa} respectively. 

\noindent We set $$2^*:=\frac{2N}{N-2s}\quad\text{and}\quad\bar{2}:=\frac{2d}{N-2s}.$$
 
\begin{theorem}\label{immsobpstar} Let $s\in(0,1)$ such that $2s<N$. Let $\Omega\subseteq\R^N$ be a $H^s$-extension domain. Then $H^s(\Omega)$ is continuously embedded in $L^q(\Omega)$ for every $q\in [1,2^*]$, i.e. there exists a positive constant $C_1=C_1(N,s,\Omega)$ such that, for every $u\in H^{s}(\Omega)$,
\begin{equation}\label{sobolev p star}
\|u\|_{L^q(\Omega)}\leq C_1\|u\|_{H^s(\Omega)}.
\end{equation}
\end{theorem}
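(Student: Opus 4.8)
The plan is to deduce the embedding $H^s(\Omega)\hookrightarrow L^q(\Omega)$ from the corresponding embedding on the whole space $\R^N$ together with the extension property of Theorem \ref{teo estensione}. First I would recall the classical fractional Sobolev embedding on $\R^N$: for $0<s<1$ with $2s<N$, one has $H^s(\R^N)\hookrightarrow L^{2^*}(\R^N)$ with $2^*=\frac{2N}{N-2s}$, i.e. there exists $C=C(N,s)$ such that $\|v\|_{L^{2^*}(\R^N)}\le C\|v\|_{H^s(\R^N)}$ for all $v\in H^s(\R^N)$ (this is \cite[Theorem 6.7]{hitch}). Then, given $u\in H^s(\Omega)$, I would set $v:=\mathcal{E}\mathrm{xt}\,u\in H^s(\R^N)$, which satisfies $v=u$ on $\Omega$ and $\|v\|_{H^s(\R^N)}^2\le C\|u\|_{H^s(\Omega)}^2$ by \eqref{R-3d}. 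Combining the two facts gives
\begin{equation*}
\|u\|_{L^{2^*}(\Omega)}=\|v\|_{L^{2^*}(\Omega)}\le\|v\|_{L^{2^*}(\R^N)}\le C\|v\|_{H^s(\R^N)}\le C\|u\|_{H^s(\Omega)},
\end{equation*}
which is \eqref{sobolev p star} for the endpoint $q=2^*$.

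Next I would handle the full range $q\in[1,2^*]$. Since $\Omega$ is bounded (this is part of the standing geometric assumptions, as $\partial\Omega$ is a $d$-set and $\Omega$ is an $(\varepsilon,\delta)$ domain approximated by bounded $\Omega_n$), the Lebesgue measure $|\Omega|$ is finite, so for $1\le q\le 2^*$ Hölder's inequality yields $\|u\|_{L^q(\Omega)}\le|\Omega|^{\frac{1}{q}-\frac{1}{2^*}}\|u\|_{L^{2^*}(\Omega)}$. Chaining this with the endpoint estimate gives $\|u\|_{L^q(\Omega)}\le C_1\|u\|_{H^s(\Omega)}$ with $C_1=C_1(N,s,\Omega)$ absorbing both the Sobolev constant, the extension constant and the power of $|\Omega|$; note the dependence on $\Omega$ is exactly through the extension operator norm and the measure of $\Omega$.

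I do not expect a serious obstacle here: the argument is a completely standard "extend, apply the $\R^N$ embedding, restrict" scheme, and the only point requiring care is making sure the boundedness of $\Omega$ is genuinely available so that the interpolation down to $q=1$ is legitimate; if one did not want to assume boundedness one would only get the estimate for $q=2^*$ (and, by interpolation with $L^2$, for $q\in[2,2^*]$). The mildly delicate ingredient is simply citing the correct sharp fractional Sobolev inequality on $\R^N$ with the right constant, but since this is quoted from \cite{hitch} it requires no work. In short, the whole proof is: quote the Gagliardo–Nirenberg–Sobolev embedding for $H^s(\R^N)$, pull it back through $\mathcal{E}\mathrm{xt}$ using \eqref{R-3d}, and finish with Hölder on the bounded set $\Omega$.
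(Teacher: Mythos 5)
Your argument is correct and is essentially the paper's own route: the paper does not prove this statement but simply quotes \cite[Theorem 6.7]{hitch}, and the proof of that cited result is exactly your scheme of extending via Theorem \ref{teo estensione}, applying the whole-space embedding $H^s(\R^N)\hookrightarrow L^{2^*}(\R^N)$, restricting to $\Omega$, and using H\"older on the bounded set $\Omega$ to cover $q\in[1,2^*)$. Your remark that boundedness of $\Omega$ is genuinely needed for the range $q<2$ is accurate and consistent with the paper's standing assumption (cf.\ Proposition \ref{teotraccia}, stated for bounded $(\varepsilon,\delta)$ domains) and with the hypotheses of the cited theorem.
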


\begin{theorem}\label{immsobpbar} Let $s\in(0,1)$ be such that $N-d<2s<N$. Let $\Omega\subseteq\R^N$ be a $H^s$-extension domain having as boundary $\partial\Omega$ a $d$-set, for $0<d\leq N$. Then $H^s(\Omega)$ is continuously embedded in $L^q(\partial\Omega)$ for every $q\in [1,\bar{2}\,]$, i.e. there exists a positive constant $C_2=C_2(N,s,d,\Omega)$ such that, for every $u\in H^s(\Omega)$,
\begin{equation}\label{sobolev p bar}
\|u\|_{L^q(\partial\Omega)}\leq C_2\|u\|_{H^s(\Omega)}.
\end{equation}
\end{theorem}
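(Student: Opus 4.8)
The plan is to factor the embedding through the trace space of $H^s(\Omega)$ on $\partial\Omega$ and then apply a Sobolev-type inequality for Besov spaces living on the $d$-set $\partial\Omega$. First observe that the hypothesis $N-d<2s<N$ forces $\alpha=s-\frac{N-d}{2}\in(0,1)$ (indeed $\alpha>0$ since $2s>N-d$, and $\alpha<s<1$), and that
\[
d-2\alpha=d-2s+(N-d)=N-2s>0,\qquad \bar{2}=\frac{2d}{N-2s}=\frac{2d}{d-2\alpha}>2,
\]
so $\bar{2}$ is exactly the critical exponent attached to smoothness $\alpha$ and dimension $d$, and the interval $[1,\bar{2}\,]$ is nondegenerate.

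\emph{Main step.} By Proposition \ref{teotraccia}(i) the trace operator $\gamma_0$ is bounded from $H^s(\Omega)$ into $B^{2,2}_\alpha(\partial\Omega)$, hence $\|\gamma_0 u\|_{B^{2,2}_\alpha(\partial\Omega)}\le C\|u\|_{H^s(\Omega)}$ with $C=C(N,s,d,\Omega)$; alternatively one may extend $u$ to $H^s(\R^N)$ by Theorem \ref{teo estensione} and work with the restriction of the extension to $\partial\Omega$, which represents $\gamma_0 u$ $\mu$-a.e. Since $0<2\alpha<d$, the Besov space $B^{2,2}_\alpha(\partial\Omega)$ on the $d$-set $\partial\Omega$ is continuously embedded in $L^{\bar{2}}(\partial\Omega)$ (this is \cite[Lemma~1, p.~214]{JoWa}), so composing yields $\|u\|_{L^{\bar{2}}(\partial\Omega)}=\|\gamma_0 u\|_{L^{\bar{2}}(\partial\Omega)}\le C_2\|u\|_{H^s(\Omega)}$, which is the assertion for $q=\bar{2}$.

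\emph{Smaller exponents.} Since $\partial\Omega$ is a bounded, hence compact, $d$-set, covering it by finitely many balls and invoking the upper bound in \eqref{defindset} gives $\mu(\partial\Omega)<\infty$; therefore, for $1\le q\le\bar{2}$, Hölder's inequality yields $\|\gamma_0 u\|_{L^q(\partial\Omega)}\le\mu(\partial\Omega)^{\frac1q-\frac1{\bar{2}}}\|\gamma_0 u\|_{L^{\bar{2}}(\partial\Omega)}$, and this extends the estimate to every $q\in[1,\bar{2}\,]$, completing the proof.

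I expect the hard part to be the Besov–Sobolev embedding on the abstract $d$-set $\partial\Omega$. In $\R^N$ one would deduce the boundary inequality from $H^s(\R^N)\hookrightarrow L^{2^*}(\R^N)$, but for $d<N$ the set $\partial\Omega$ is Lebesgue-null, so no restriction of $L^{2^*}(\R^N)$-functions is available and the entire Sobolev argument must be run intrinsically on the metric measure space $(\partial\Omega,\mu)$, with the two-sided bound \eqref{defindset} replacing the Lebesgue measure in the relevant fractional maximal-function and covering estimates; this is the nontrivial input borrowed from \cite{JoWa}. The two soft steps (trace theorem and Hölder) are routine once it is in hand.
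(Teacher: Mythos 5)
Your proof is correct and is essentially the argument the paper intends: the paper offers no proof beyond the citation of \cite[Lemma 1, p.~214]{JoWa}, and your factorization of the embedding through the trace operator $\gamma_0\colon H^s(\Omega)\to B^{2,2}_\alpha(\partial\Omega)$ followed by the Jonsson--Wallin Besov--Sobolev embedding on the $d$-set (with $\bar 2=\frac{2d}{d-2\alpha}$) is exactly how that citation is meant to be used. The extra H\"older step for $q<\bar 2$ is fine in the paper's setting, where $\Omega$ is bounded so that $\mu(\partial\Omega)<\infty$.
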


\noindent We point out that $2^*\geq\bar{2}\geq 2$.

For $1\leq p\leq\infty$, we denote by $L^p(\Omega,m)$ the Lebesgue space with respect to the measure
\begin{equation}\label{defmisura}
\de m=\de\La_N+\de\mu,
\end{equation}
where $\mu$ is the $d$-measure supported on $\partial\Omega$. For $p\in[1,\infty)$, we endow $L^p(\Omega,m)$ with the following norm:
\begin{equation*}
\|u\|^p_{L^p(\Omega,m)}=\|u\|^p_{L^p(\Omega)}+\|\uu\|^p_{L^p(\partial\Omega)}.
\end{equation*}
For $p=\infty$, we endow $L^\infty(\Omega,m)$ with the following norm
$$\|u\|_{L^\infty(\Omega,m)}:=\max\left\{\|u\|_{L^\infty(\Omega)},\|\uu\|_{L^\infty(\partial\Omega)}\right\}.$$
With these definitions, $L^p(\Omega,m)$ becomes a Banach space for every $1\leq p\leq\infty$.

We now prove a version of the well known Nash inequality adapted to our setting.
\begin{prop}\label{Nash}
Let $u\in H^s(\Omega)$. Then there exists a positive constant $\bar{C}=\bar{C}(N,s,d,\Omega)$ such that the following Nash inequality holds,
\begin{equation}\label{Nashineq}
\|u\|_{L^2(\Omega,m)}^{2+\frac{4}{\lambda}}\leq\bar{C}\|u\|^2_{H^s(\Omega)}\|u\|_{L^1(\Omega,m)}^{\frac{4}{\lambda}},
\end{equation}
where $\lambda=\frac{2d}{d-N+2s}$.
\end{prop}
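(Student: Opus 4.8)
The plan is to combine the classical Nash inequality on $\R^N$ with the two Sobolev-type embeddings of Theorems \ref{immsobpstar} and \ref{immsobpbar}, splitting the $L^2(\Omega,m)$-norm into its bulk part $\|u\|_{L^2(\Omega)}$ and its boundary part $\|u\|_{L^2(\partial\Omega)}$ and handling each separately. For the bulk term, I would first extend $u$ to $\widetilde u=\mathcal{E}{\rm xt}\,u\in H^s(\R^N)$ using Theorem \ref{teo estensione}. On $\R^N$ the standard fractional Nash inequality reads
\begin{equation}\notag
\|\widetilde u\|_{L^2(\R^N)}^{2+\frac{4s}{N}}\leq C\,|\widetilde u|_{H^s(\R^N)}^2\,\|\widetilde u\|_{L^1(\R^N)}^{\frac{4s}{N}},
\end{equation}
which follows in the usual way from the Plancherel identity by splitting the frequency integral at a radius $R$, bounding low frequencies by $\|\widehat{\widetilde u}\|_{L^\infty}\leq\|\widetilde u\|_{L^1}$ and high frequencies by $R^{-2s}|\widetilde u|_{H^s}^2$, and optimizing in $R$. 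Restricting back to $\Omega$ and using \eqref{R-3d} together with $\|\widetilde u\|_{L^1(\R^N)}$ controlled on the (bounded) support-enlargement — more precisely the fact that $\mathcal{E}{\rm xt}$ can be taken to preserve $L^1$-type bounds on a fixed neighbourhood of $\overline\Omega$, or alternatively using Hölder on the bounded domain — gives $\|u\|_{L^2(\Omega)}^{2+\frac4\lambda}\leq C\,\|u\|_{H^s(\Omega)}^2\,\|u\|_{L^1(\Omega,m)}^{\frac4\lambda}$ once one checks that the exponent $\frac{4s}{N}$ is replaced, after interpolation with the boundary term, by $\frac4\lambda$ with $\lambda=\frac{2d}{d-N+2s}$.

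For the boundary term I would not try to prove a Nash inequality intrinsically on $\partial\Omega$ (which would need a heat-kernel or capacity argument on the $d$-set), but instead interpolate: by Hölder on $(\partial\Omega,\mu)$,
\begin{equation}\notag
\|u\|_{L^2(\partial\Omega)}\leq\|u\|_{L^{\bar2}(\partial\Omega)}^{\theta}\,\|u\|_{L^1(\partial\Omega)}^{1-\theta}
\end{equation}
for the appropriate $\theta\in(0,1)$ determined by $\tfrac12=\tfrac{\theta}{\bar2}+(1-\theta)$, and then bound $\|u\|_{L^{\bar2}(\partial\Omega)}$ by $C_2\|u\|_{H^s(\Omega)}$ via Theorem \ref{immsobpbar}. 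A parallel interpolation $\|u\|_{L^2(\Omega)}\leq\|u\|_{L^{2^*}(\Omega)}^{\theta'}\|u\|_{L^1(\Omega)}^{1-\theta'}$ with Theorem \ref{immsobpstar} handles the bulk term directly, avoiding the Fourier argument altogether; this is probably the cleaner route. The key bookkeeping is that both interpolation exponents, when one solves for the power of the $H^s$-norm, produce exactly the exponent $2$ on $\|u\|_{H^s(\Omega)}$ and a power $\frac4\lambda$ on the $L^1$-norm precisely when $\lambda=\frac{2d}{d-N+2s}$ — one verifies $1-\theta=\frac{2/\lambda}{1+2/\lambda}$ reduces to this identity using $\bar2=\frac{2d}{N-2s}$ (note $d-N+2s>0$ is exactly the condition $N-d<2s$ required in Theorem \ref{immsobpbar}, and $\lambda$ is the smaller of the two "effective dimensions", so the $d$-set term dictates the exponent). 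Then I would raise both estimates to the power $2+\frac4\lambda$, use $(a+b)^{q}\leq C_q(a^q+b^q)$, add, and finally bound $\|u\|_{L^1(\Omega)}^{\frac4\lambda}+\|u\|_{L^1(\partial\Omega)}^{\frac4\lambda}\leq C\|u\|_{L^1(\Omega,m)}^{\frac4\lambda}$.

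The main obstacle I anticipate is not any single estimate but the matching of exponents: one must confirm that the $L^1$-interpolation weight coming from the boundary embedding (governed by $\bar2$ and hence by $d$) and the one coming from the bulk embedding (governed by $2^*$ and hence by $N$) are compatible, i.e. that after choosing the worse of the two exponents one still gets a clean inequality rather than a sum of terms with different homogeneities. Since $2^*\geq\bar2$, the boundary term has the "smaller" critical exponent, so $\lambda$ is chosen to match the boundary interpolation and the bulk interpolation is then absorbed because its $L^1$-power is larger and $\|u\|_{L^1(\Omega,m)}$ can be assumed $\leq$ a fixed constant times $\|u\|_{L^2(\Omega,m)}$ after a scaling normalization (the inequality is homogeneous, so one may normalize $\|u\|_{L^1(\Omega,m)}=1$ from the start, which is the standard trick and makes the whole argument a matter of summing two embeddings). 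A secondary technical point is ensuring the constant $\bar C$ depends only on $N,s,d,\Omega$ and not on $u$; this is automatic since every ingredient — the extension constant, $C_1$, $C_2$, and the Hölder constants — has exactly that dependence.
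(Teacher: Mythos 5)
Your preferred (second) route is essentially sound and is close in spirit to the paper's argument, which also proceeds purely by interpolation plus the embeddings of Theorems \ref{immsobpstar} and \ref{immsobpbar}; the difference is in the choice of the bulk endpoint. The paper interpolates \emph{both} the bulk and the boundary $L^2$-norms between $L^{\bar 2}$ and $L^1$ --- legitimate on $\Omega$ too, since $\bar 2\le 2^*$ and Theorem \ref{immsobpstar} gives $H^s(\Omega)\hookrightarrow L^q(\Omega)$ for every $q\in[1,2^*]$ --- so both pieces carry the \emph{same} exponent $\mu=\frac{d-N+2s}{2d-N+2s}$, they sum directly to $\|u\|_{L^2(\Omega,m)}\le C\|u\|_{H^s(\Omega)}^{1-\mu}\|u\|_{L^1(\Omega,m)}^{\mu}$, and raising to the power $1+\frac{2}{\lambda}=\frac{1}{1-\mu}$ gives \eqref{Nashineq} at once, with no homogeneity mismatch to repair. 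By instead interpolating the bulk term against $L^{2^*}$ you produce the exponent $\frac{4s}{N}=\frac{4}{\lambda'}$ with $\lambda'=\frac{N}{s}\le\lambda$, and your absorption step is then the only delicate point: after normalizing $\|u\|_{L^1(\Omega,m)}=1$ you must treat the case $\|u\|_{L^2(\Omega)}<1$ separately (for a quantity below one, passing to the smaller power $2+\frac{4}{\lambda}$ goes the wrong way), and there you need the lower bound $1=\|u\|_{L^1(\Omega,m)}\le m(\overline\Omega)^{1/2}\|u\|_{L^2(\Omega,m)}\le C\|u\|_{H^s(\Omega)}$, which uses the finiteness of $m$ (boundedness of $\Omega$) and the trace embedding; with that case distinction your argument closes, but the bookkeeping is heavier than necessary and is only sketched in your write-up. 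The Fourier-extension version of the bulk estimate in your first paragraph is correct in principle but superfluous, as you yourself note. So: correct modulo spelling out the absorption, but the cleaner move --- and the one the paper makes --- is simply to use $L^{\bar 2}(\Omega)$ rather than $L^{2^*}(\Omega)$ for the bulk interpolation, which makes the whole proof a one-line summation of the two embeddings.
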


\begin{proof} We adapt the proof of Proposition 4.5 in \cite{Daners} to our context. We set $\lambda=\frac{2d}{d-N+2s}$. From interpolation inequalities (see e.g. \cite[Section 7.1]{giltrud}), we have that
\begin{equation}\label{interpolazione}
\|u\|_{L^2(\omega)}\leq\|u\|_{L^{\bar{2}}(\omega)}^{1-\mu}\|u\|_{L^1(\omega)}^\mu,
\end{equation}
with $\mu=\frac{d-N+2s}{2d-N+2s}=1-\frac{d}{2d-N+2s}$ and $\omega$ is either $\Omega$ or $\partial\Omega$. Hence, from Theorems \ref{immsobpstar} and \ref{immsobpbar} we obtain
\begin{equation}\label{somma1}
\|u\|_{L^2(\Omega,m)}\leq C_3\|u\|_{H^s(\Omega)}^{\frac{d}{2d-N+2s}}\|u\|_{L^1(\Omega,m)}^{1-\frac{d}{2d-N+2s}},
\end{equation}
where $C_3=C_3(N,s,d,\Omega)=\max\{C_1,C_2\}$ and $C_1$ and $C_2$ are the constants appearing in \eqref{sobolev p star} and \eqref{sobolev p bar} respectively. Therefore, since $\frac{2d-N+2s}{d}=\frac{1}{1-\mu}=1+\frac{2}{\lambda}$, from Theorem \ref{equivalenza norme} we have that there exists a positive constant $\bar{C}$ depending on $N$, $s$, $d$ and $\Omega$ such that
\begin{equation}\notag
\|u\|_{L^2(\Omega,m)}^{1+\frac{2}{\lambda}}\leq\bar{C}\|u\|_{H^s(\Omega)}\|u\|_{L^1(\Omega,m)}^{\frac{2}{\lambda}},
\end{equation}
i.e., \eqref{Nashineq} holds.
\end{proof}

\section{The time dependent generalized regional fractional Laplacian}\label{sezgreen}
\setcounter{equation}{0}

From now on, let $T>0$ be fixed. We introduce a suitable measurable function $K\colon [0,T]\times\Omega\times\Omega\to\R$ such that $K(t,\cdot,\cdot)$ is symmetric for every $t\in[0,T]$ and there exist two constants $0<k_1<k_2$ such that $k_1\leq K(t,x,y)\leq k_2$ for a.e. $t\in[0,T]$ and $x,y\in\Omega$.\\
For $u,v\in H^s(\Omega)$, we set
\begin{equation}\notag
|u|_{s,K}:=\frac{C_{N,s}}{2}\iint_{\Omega\times\Omega}K(t,x,y)\frac{(u(x)-u(y))^2}{|x-y|^{N+2s}}\,\de\La_N(x)\de\La_N(y)
\end{equation}
and
\begin{equation}\notag
(u,v)_{s,K}:=\frac{C_{N,s}}{2}\iint_{\Omega\times\Omega}K(t,x,y)\frac{(u(x)-u(y))(v(x)-v(y))}{|x-y|^{N+2s}}\,\de\La_N(x)\de\La_N(y).
\end{equation}
We point out that $|u|_{s,1}=|u|^2_{H^s(\Omega)}$ and $(u,v)_{s,1}=(u,v)_s$.

We introduce now a time dependent generalization of the regional fractional Laplacian $(-\Delta)^s_\Omega$. For the definition of regional fractional Laplacian, among the others we refer to \cite{BBC,chenkum,guan1,guan2,guan3}.

\noindent Let $s\in(0,1)$. For every fixed $t\in[0,T]$, we define the operator $\B$ acting on $H^s(\Omega)$ in the following way:
\begin{equation}\label{fracreglap}
\begin{split}
\B u(t,x) &=C_{N,s}{\rm P.V.}\int_\Omega K(t,x,y) \frac{u(t,x)-u(t,y)}{|x-y|^{N+2s}}\,\de\La_N(y)\\[2mm]
&=C_{N,s}\lim_{\epsilon\to 0^+}\int_{\{y\in\Omega\,:\,|x-y|>\epsilon\}} K(t,x,y)\frac{u(t,x)-u(t,y)}{|x-y|^{N+2s}}\,\de\La_N(y).
\end{split}
\end{equation}
The positive constant $C_{N,s}$ is defined by
\begin{equation}\notag
C_{N,s}=\frac{s2^{2s}\Gamma(\frac{N+2s}{2})}{\pi^\frac{N}{2}\Gamma(1-s)},
\end{equation}
where $\Gamma$ is the Euler function.
If $K\equiv 1$ on $[0,T]\times\Omega\times\Omega$, the operator $\B$ reduces to the usual regional fractional Laplacian $(-\Delta)^s_\Omega$.\\


We now introduce the notion of \emph{fractional conormal derivative} on $(\varepsilon,\delta)$ domains having as boundary a $d$-set and satisfying hypotheses $(\Hcal)$ in Section \ref{geometria}. We will generalize the notion of fractional normal derivative on irregular sets, which was introduced in \cite{JEEfraz} (see also \cite{CLVmosco,CLNODEA} for the nonlinear case).

\noindent We define the space
\begin{equation*}
V(\B,\Omega):=\{u\in H^s(\Omega)\,:\,\B u\in L^{2}(\Omega)\,\,\text{in the sense of distributions}\},
\end{equation*}
which is a Banach space equipped with the norm
\begin{equation}\notag
\|u\|^2_{V(\B,\Omega)}:=\|u\|^2_{H^s(\Omega)}+\|\B u\|^2_{L^{2}(\Omega)}.
\end{equation}

\noindent We define the fractional conormal derivative on Lipschitz domains.

\begin{definition}\label{derivatafrazlip}
Let $\T\subset\R^N$ be a Lipschitz domain. Let $u\in V(\BT,\T):=\{u\in H^s(\T)\,:\,\BT u\in L^{2}(\T)\,\,\text{in the sense of distributions}\}$. We say that $u$ has a weak fractional conormal derivative in $(H^{s-\frac{1}{2}}(\partial\T))'$ if there exists $g\in (H^{s-\frac{1}{2}}(\partial\T))'$ such that
\begin{align}
&\left\langle g,\vv\right\rangle_{(H^{s-\frac{1}{2}}(\partial\T))', H^{s-\frac{1}{2}}(\T)}
=-\int_{\T} \BT u\,v\,\de\La_N \label{fracgreenlip}\\[2mm]
&+\frac{C_{N,s}}{2}\iint_{\T\times\T}K(t,x,y)\frac{(u(x)-u(y))(v(x)-v(y))}{|x-y|^{N+2s}}\,\de\La_N(x)\de\La_N(y)\notag
\end{align}
for every $v\in H^s(\T)$. In this case, $g$ is uniquely determined and we call $C_{s}\Ncal u:=g$ the weak fractional conormal derivative of $u$, where
\begin{equation*}
C_s:=\frac{C_{1,s}}{2s(2s-1)}\int_0^\infty\frac{|z-1|^{1-2s}-(z\vee 1)^{1-2s}}{z^{2-2s}}\,\de z.
\end{equation*}
\end{definition}

\medskip

\noindent We point out that, if $K(t,x,y)\equiv1$, we recover the definition of fractional normal derivative on Lipschitz sets introduced in \cite{JEEfraz}. Moreover, if in addition to that we let $s\to 1^-$ in \eqref{fracgreenlip}, we obtain the Green formula for Lipschitz domains \cite{bregil}.

\medskip

\begin{theorem}[Generalized fractional Green formula]\label{greenf}
There exists a bounded linear operator $\Ncal$ from $V(\B,\Omega)$ to $(B^{2,2}_{\alpha}(\partial\Omega))'$.\\
The following generalized Green formula holds for every $u\in V(\B,\Omega)$ and $v\in H^s(\Omega)$,
\begin{equation}\label{fracgreen}
\begin{split}
&C_{s}\left\langle \Ncal u,\vv\right\rangle_{(B^{2,2}_{\alpha}(\partial\Omega))', B^{2,2}_{\alpha}(\partial\Omega)}
=-\int_\Omega \B u\,v\,\de\La_N\\[4mm]
&+\frac{C_{N,s}}{2}\iint_{\Omega\times\Omega}K(t,x,y)\frac{(u(x)-u(y))(v(x)-v(y))}{|x-y|^{N+2s}}\,\de\La_N(x)\de\La_N(y).
\end{split}
\end{equation}

\end{theorem}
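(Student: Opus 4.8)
The idea is to transfer the fractional Green formula from the Lipschitz approximating domains $\Omega_n$ to $\Omega$ by a limiting argument, using the trace theory and the extension property recalled in Section \ref{preliminari}.

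First I would fix $u\in V(\B,\Omega)$ and $v\in H^s(\Omega)$ and rewrite the bilinear expression
\begin{equation*}
\mathfrak{F}_\Omega(u,v):=-\int_\Omega \B u\,v\,\de\La_N+\frac{C_{N,s}}{2}\iint_{\Omega\times\Omega}K(t,x,y)\frac{(u(x)-u(y))(v(x)-v(y))}{|x-y|^{N+2s}}\,\de\La_N(x)\de\La_N(y).
\end{equation*}
The first step is to show that $|\mathfrak{F}_\Omega(u,v)|\leq C\|u\|_{V(\B,\Omega)}\|v\|_{H^s(\Omega)}$: the bulk integral is controlled by Cauchy--Schwarz in $L^2(\Omega)$, and the double integral by Cauchy--Schwarz in the measure $K(t,x,y)|x-y|^{-N-2s}\,\de\La_N\de\La_N$ together with the boundedness $K\leq k_2$, giving a bound by $|u|_{H^s(\Omega)}|v|_{H^s(\Omega)}$. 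Next, and this is the crucial point, I claim that $\mathfrak{F}_\Omega(u,v)$ depends on $v$ only through its trace $\vv\in B^{2,2}_\alpha(\partial\Omega)$: if $v\in H^s(\Omega)$ has zero trace, then $\mathfrak{F}_\Omega(u,v)=0$. To see this I would first verify the identity for $v\in\D(\Omega)$ directly from the definition of $\B u$ as a distribution (this is essentially the definition of $V(\B,\Omega)$, since for $v\in\D(\Omega)$ the double integral equals $\langle\B u,v\rangle$ up to the P.V.\ manipulation, which is justified because $K$ is bounded and symmetric), then pass to general zero-trace $v$ by density of $\D(\Omega)$ in the zero-trace subspace of $H^s(\Omega)$, using the continuity bound from the first step.

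Having established this, the map $\vv\mapsto \mathfrak{F}_\Omega(u,v)$ is a well-defined linear functional on $\gamma_0(H^s(\Omega))=B^{2,2}_\alpha(\partial\Omega)$ (by Proposition \ref{teotraccia}(i)): given $\psi\in B^{2,2}_\alpha(\partial\Omega)$, pick any $v\in H^s(\Omega)$ with $\vv=\psi$ and set $\langle C_s\Ncal u,\psi\rangle:=\mathfrak{F}_\Omega(u,v)$; this is independent of the choice of $v$ by the zero-trace claim. For continuity I would use the bounded right inverse $\Ext$ from Proposition \ref{teotraccia}(ii): taking $v=\Ext\psi$ gives $|\langle C_s\Ncal u,\psi\rangle|\leq C\|u\|_{V(\B,\Omega)}\|\Ext\psi\|_{H^s(\Omega)}\leq C\|u\|_{V(\B,\Omega)}\|\psi\|_{B^{2,2}_\alpha(\partial\Omega)}$, so $\Ncal u\in(B^{2,2}_\alpha(\partial\Omega))'$ with norm bounded by $C\|u\|_{V(\B,\Omega)}$, proving both the boundedness of $\Ncal$ and formula \eqref{fracgreen}.

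\textbf{Main obstacle.} The delicate step is the zero-trace claim, i.e.\ identifying which subspace of $H^s(\Omega)$ is the kernel of $\gamma_0$ and knowing that $\D(\Omega)$ is dense in it; on an irregular $(\varepsilon,\delta)$ domain with $d$-set boundary one cannot simply invoke the classical $H^s_0(\Omega)$ theory, so one must argue via the extension operator $\mathcal{E}\mathrm{xt}$ of Theorem \ref{teo estensione} and the capacitary characterization of the trace (the limit defining $\gamma_0$ exists $(s,2)$-quasi everywhere, per \cite{AdHei}), reducing matters to the fact that a function in $H^s(\R^N)$ whose quasi-everywhere boundary values vanish can be approximated by functions supported in $\Omega$. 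A secondary technical point is the P.V.\ versus absolutely convergent integral issue in matching $\langle\B u,v\rangle$ with the double integral for test $v$; this is handled by the symmetry of $K$ and a standard symmetrization that makes the singularity integrable, exactly as in the $K\equiv1$ case treated in \cite{JEEfraz}.
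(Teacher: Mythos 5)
Your first two steps coincide with the paper's: the bound $|\mathfrak{F}_\Omega(u,v)|\le C\|u\|_{V(\B,\Omega)}\|v\|_{H^s(\Omega)}$ and the use of the extension operator $\Ext$ of Proposition \ref{teotraccia} to realize the functional as an element of $(B^{2,2}_{\alpha}(\partial\Omega))'$ are exactly the opening of the paper's argument. Where you diverge is the well-definedness, and that is where your plan has a genuine gap. You reduce everything to the claim that $\mathfrak{F}_\Omega(u,v)=0$ whenever $\gamma_0 v=0$, to be obtained from density of $\D(\Omega)$ in the trace-kernel of $H^s(\Omega)$; but this density is precisely what is not proved, it is not available off the shelf for $(\varepsilon,\delta)$ domains with $d$-set boundary, and the route you sketch (a capacitary spectral-synthesis argument in the spirit of \cite{AdHei}) does not match the hypothesis you actually have. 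Vanishing of the trace as an element of $B^{2,2}_\alpha(\partial\Omega)$ means $\gamma_0 v=0$ only $\mu$-almost everywhere, whereas approximation theorems based on the $(s,2)$-capacity require vanishing quasi-everywhere on $\partial\Omega$; since $d>N-2s$, a subset of $\partial\Omega$ of Hausdorff dimension strictly between $N-2s$ and $d$ is $\mu$-null but has positive capacity, so the two null classes do not coincide and your reduction does not go through as stated. Until this approximation step (or a substitute) is supplied, the independence of $\langle C_s\Ncal u,\psi\rangle$ from the chosen $H^s(\Omega)$-extension of $\psi$ — the very point the theorem hinges on — is not established.

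For comparison, the paper does not argue through the kernel of the trace at all. It defines $\langle C_s\Ncal u,\psi\rangle:=\langle l(u),\Ext\psi\rangle$, with $l(u)$ the same functional as your $\mathfrak{F}_\Omega(u,\cdot)$, and then justifies \eqref{fracgreen} for arbitrary $v\in H^s(\Omega)$ by means of the invading Lipschitz domains $\Omega_n$ of hypotheses $(\Hcal)$: Definition \ref{derivatafrazlip} gives the Green identity on each $\Omega_n$, and the dominated convergence theorem allows passage to the limit, so the conormal derivative on $\Omega$ is obtained as the limit of the well-understood conormal derivatives on the $\Omega_n$. If you wish to keep your more intrinsic argument, you must prove the missing density theorem and bridge the $\mu$-a.e.\ versus quasi-everywhere discrepancy; otherwise the approximation-by-Lipschitz-domains route is the one the paper uses precisely to sidestep that difficulty.
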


\begin{proof} We adapt to our setting the proof of Theorem 2.2 in \cite{CLNODEA}, which we recall for the sake of completeness.
For $u\in V(\B,\Omega)$ and $v\in H^s(\Omega)$, we define
\begin{equation}\notag
\langle l(u),v\rangle:=-\int_\Omega \B u\,v\,\de\La_N +\frac{C_{N,s}}{2}(u,v)_{s,K}.
\end{equation}

\noindent From H\"older's inequality, the trace theorem and the hypotheses on $K(t,x,y)$, we get
\begin{align}
|\left\langle l(u),v\rangle\right|&\leq \|\B u\|_{L^{2}(\Omega)}\|v\|_{L^2(\Omega)}+k_2\frac{C_{N,s}}{2}\|u\|_{H^s(\Omega)}\|v\|_{H^{s}(\Omega)}\notag\\
&\leq C\,\|u\|_{V(\B,\Omega)}\|v\|_{H^s(\Omega)}\leq C\,\|u\|_{V(\B,\Omega)}\|v\|_{B^{2,2}_{\alpha}(\partial\Omega)}.\label{eqstima}
\end{align}
We prove that the operator $l(u)$ is independent from the choice of $v$ and it is an element of $(B^{2,2}_{\alpha}(\partial\Omega))'$. From Proposition \ref{teotraccia}, for every $v\in B^{2,2}_{\alpha}(\partial\Omega)$ there exists a function $\tilde w:=\Ext\,v\in H^s(\Omega)$ such that 
\begin{equation}\label{eq1}
\|\tilde w\|_{H^{s}(\Omega)}\leq C\|v\|_{B^{2,2}_{\alpha}(\partial\Omega)}
\end{equation}
and $\tilde w|_{\partial\Omega}=v$ $\mu$-almost everywhere. From \eqref{fracgreen} we have that
\begin{equation}\notag
C_{s}\left\langle \Ncal u,v\right\rangle_{(B^{2,2}_{\alpha}(\partial\Omega))', B^{2,2}_{\alpha}(\partial\Omega)}=\langle l(u),\tilde w\rangle.
\end{equation}
The conclusion follows from \eqref{eqstima} and \eqref{eq1}.

We now recall that $\Omega$ is approximated by a sequence of Lipschitz domains $\Omega_n$, for $n\in\N$, satisfying conditions $(\Hcal)$ in Section \ref{geometria}. From \eqref{fracgreenlip} we have that
\begin{align}
&C_{s}\left\langle \Ncal u,\vv\right\rangle_{(H^{s-\frac{1}{2}}(\partial\Omega_n))', H^{s-\frac{1}{2}}(\partial\Omega_n)}
=-\int_\Omega\chi_{\Omega_n}\Bn u\,v\,\de\La_N \notag\\[2mm]
&+\frac{C_{N,s}}{2}\iint_{\Omega\times\Omega}\chi_{\Omega_n}(x)\chi_{\Omega_n}(y)K(t,x,y) \frac{(u(x)-u(y))(v(x)-v(y))}{|x-y|^{N+2s}}\,\de\La_N(x)\de\La_N(y).\notag
\end{align}
From the dominated convergence theorem, we have
\begin{align*}\label{E:Lipschitz}
&\lim_{n\to\infty} C_{s}\left\langle \Ncal u,\vv\right\rangle_{(H^{s-\frac{1}{2}}(\partial\Omega_n))', H^{s-\frac{1}{2}}(\partial\Omega_n)}=\lim_{n\to\infty}\left(-\int_{\Omega_n} \Bn u\,v\,\de\La_N\right.\\[2mm]\notag
&+\left.\frac{C_{N,s}}{2}\iint_{\Omega_n\times\Omega_n}K(t,x,y)\frac{(u(x)-u(y))(v(x)-v(y))}{|x-y|^{N+2s}}\,\de\La_N(x)\de\La_N(y)\right)\notag\\
&=-\int_\Omega \B u\,v\,\de\La_N +\frac{C_{N,s}}{2}(u,v)_{s,K}=\left\langle l(u),v\right\rangle\notag
\end{align*}
for every $u\in V(\B,\Omega)$ and $v\in H^{s}(\Omega)$. Hence, we define the fractional conormal derivative on $\Omega$ as
\begin{equation}\notag
\langle C_{s}\Ncal u,\vv\rangle_{(B^{2,2}_{\alpha}(\partial\Omega))', B^{2,2}_{\alpha}(\partial\Omega)}:=-\int_\Omega \B u\,v\,\de\La_N+\frac{C_{N,s}}{2}(u,v)_{s,K}.
\end{equation}
\end{proof}

\begin{remark} As in the Lipschitz case, when $K(t,x,y)\equiv 1$, we recover the notion of fractional normal derivative on an irregular set introduced in \cite{JEEfraz}.
Moreover, from \cite[Remark 3.1]{JEEfraz}, when $s\to 1^-$ and $K(t,x,y)\equiv1$ in \eqref{fracgreen}, we recover the Green formula proved in \cite{LaVe2} for fractal domains.
\end{remark}


\section{The non-autonomous energy form and the evolution family}\label{sec3}
\setcounter{equation}{0}


From now on, let us suppose that $s\in (0,1)$ is such that $N-d<2s<N$. Let $b\colon(0,T)\times\partial\Omega\to\R$ be a function such that
\begin{equation}\label{ipotesi b}
\begin{cases}
b\in L^\infty([0,T]\times\partial\Omega),\\
\inf b(t,P)>b_0>0\quad \text{for every }(t,P)\in [0,T]\times\partial\Omega,\\
\text{there exists }\eta\in(\frac{1}{2}, 1): |b(t,P)-b(\tau,P)|\leq c |t-\tau|^\eta\quad\text{for every }P\in\partial\Omega.
\end{cases}
\end{equation}

\noindent Let $\zeta\colon[0,T]\times\partial\Omega\times\partial\Omega\to\R$ be such that $\zeta(t,\cdot,\cdot)$ is symmetric for every fixed $t\in[0,T]$ and $\zeta_1\leq\zeta(t,x,y)\leq\zeta_2$ for suitable constants $0<\zeta_1<\zeta_2$ and for a.e $(t,x,y)\in[0,T]\times\partial\Omega\times\partial\Omega$.

We now introduce a bounded linear operator $\Theta^t_{\alpha}\colon B^{2,2}_\alpha (\partial\Omega)\to (B^{2,2}_\alpha (\partial\Omega))'$ defined by
\begin{equation}\label{nonlocal-op.}
\langle\Theta^t_{\alpha}(u),v\rangle:=\iint_{\partial\Omega\times\partial\Omega}\zeta(t,x,y)\frac{(u(x)-u(y))(v(x)-v(y))}{|x-y|^{d+2\alpha}}\,\de\mu(x)\,\de\mu(y),
\end{equation}
where $\langle\cdot,\cdot\rangle$ denotes the duality pairing between $(B^{2,2}_\alpha (\partial\Omega))'$ and $B^{2,2}_\alpha (\partial\Omega)$ and $\alpha$ is defined in \eqref{definizione alpha}. From our hypotheses on $\zeta$, this nonlocal term on $\partial\Omega$ is equivalent to the seminorm of $B^{2,2}_\alpha (\partial\Omega)$; moreover, we point out that, if $\zeta\equiv 1$, it can be regarded as a regional fractional Laplacian of order $\alpha$ on the boundary.

\medskip


\noindent We suppose that the kernels $K(t,x,y)$ and $\zeta(t,x,y)$ appearing in the nonlocal terms in the bulk and on the boundary respectively are H\"older continuous with respect to $t$. More precisely, we suppose that there exists $\eta\in(\frac{1}{2},1)$ such that
\begin{equation}\label{holderianita nuclei}
|K(t,x,y)-K(\tau,x,y)|\leq C|t-\tau|^\eta\quad\text{and}\quad|\zeta(t,x,y)-\zeta(\tau,x,y)|\leq C|t-\tau|^\eta
\end{equation}
for a.e. $x,y\in\Omega$ and a.e. $x,y\in\partial\Omega$ respectively.
Obviously, one can take different H\"older exponents in \eqref{ipotesi b} and \eqref{holderianita nuclei}. For the sake of simplicity, we suppose that the third condition in \eqref{ipotesi b} and \eqref{holderianita nuclei} hold for the same exponent $\eta\in(\frac{1}{2},1)$.

\medskip

\noindent For every $u\in H:=L^2(\Omega,m)$, we introduce the following energy form, with effective domain $D(E)=[0,T]\times H^s(\Omega)$,
\begin{equation}\label{frattale}
E[t,u]:=
\begin{cases}
\,\displaystyle\frac{C_{N,s}}{2}\iint_{\Omega\times\Omega}K(t,x,y)\frac{|u(x)-u(y)|^2}{|x-y|^{N+2s}}\,\de\La_N(x)\de\La_N(y)\\[2mm]
+\displaystyle\int_{\partial\Omega} b(t,P)|\uu|^2\,\de\mu+\langle\Theta^t_{\alpha}(\uu),\uu\rangle&\text{if}\,\,u\in D(E),\\[2mm]
+\infty &\text{if}\,\,u\in H\setminus D(E),
\end{cases}
\end{equation}

\noindent We remark that, if $u\in H^s(\Omega)$, from \eqref{definizione alpha} and Proposition \ref{teotraccia}, its trace $\uu$ is well-defined.

We now prove some properties of the form $E$.

\begin{prop}\label{coer} For every $t\in[0,T]$, the form $E[t,u]$ is continuous and coercive on $H^s(\Omega)$.
\end{prop}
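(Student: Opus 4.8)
The plan is to establish the two required properties — continuity and coercivity of $E[t,\cdot]$ on $H^s(\Omega)$ for each fixed $t\in[0,T]$ — by reducing each of the three summands in \eqref{frattale} to the equivalent norm $|||\cdot|||_{H^s(\Omega)}$ introduced after Theorem \ref{equivalenza norme}.

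First I would address \emph{continuity}, understood as boundedness of the associated bilinear form. Writing $E[t,u]$ as the quadratic form of the symmetric bilinear form
\[
\mathcal{E}[t,u,v]=(u,v)_{s,K}+\int_{\partial\Omega}b(t,P)\,(\uu)(\vv)\,\de\mu+\langle\Theta^t_\alpha(\uu),\vv\rangle,
\]
I would bound each term separately. For the bulk term, the hypothesis $K(t,x,y)\le k_2$ gives $|(u,v)_{s,K}|\le k_2\frac{C_{N,s}}{2}|u|_{H^s(\Omega)}|v|_{H^s(\Omega)}\le k_2\,|||u|||_{H^s(\Omega)}|||v|||_{H^s(\Omega)}$. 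For the boundary zero-order term, $b\in L^\infty([0,T]\times\partial\Omega)$ together with the trace theorem (Proposition \ref{teotraccia}) and Theorem \ref{equivalenza norme} yields $\big|\int_{\partial\Omega}b\,(\uu)(\vv)\,\de\mu\big|\le\|b\|_\infty\|\uu\|_{L^2(\partial\Omega)}\|\vv\|_{L^2(\partial\Omega)}\le C\,|||u|||_{H^s(\Omega)}|||v|||_{H^s(\Omega)}$. For the nonlocal boundary term, the bound $\zeta(t,x,y)\le\zeta_2$ and the definition of the Besov seminorm give $|\langle\Theta^t_\alpha(\uu),\vv\rangle|\le\zeta_2|\uu|_{B^{2,2}_\alpha(\partial\Omega)}|\vv|_{B^{2,2}_\alpha(\partial\Omega)}$, which by the trace continuity of $\gamma_0$ is controlled by $C\,|||u|||_{H^s(\Omega)}|||v|||_{H^s(\Omega)}$. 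Summing the three estimates and using the norm equivalence proves continuity.

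Next I would prove \emph{coercivity}, i.e. $E[t,u]\ge c\,\|u\|^2_{H^s(\Omega)}$ for some $c>0$ independent of $u$ (and, here, of $t$). Drop the nonlocal boundary term $\langle\Theta^t_\alpha(\uu),\uu\rangle\ge 0$, which is nonnegative by symmetry and positivity of $\zeta$. Use $K(t,x,y)\ge k_1$ and $b(t,P)\ge b_0>0$ from \eqref{ipotesi b} to get
\[
E[t,u]\ge k_1\frac{C_{N,s}}{2}\iint_{\Omega\times\Omega}\frac{|u(x)-u(y)|^2}{|x-y|^{N+2s}}\,\de\La_N(x)\de\La_N(y)+b_0\int_{\partial\Omega}|\uu|^2\,\de\mu\ge\min\{k_1,b_0\}\,|||u|||^2_{H^s(\Omega)}.
\]
Then invoke the equivalence of $|||\cdot|||_{H^s(\Omega)}$ with the standard $H^s(\Omega)$-norm (Theorem \ref{equivalenza norme} together with Proposition \ref{teotraccia}) to conclude $E[t,u]\ge c\,\|u\|^2_{H^s(\Omega)}$.

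I do not expect any genuine obstacle: the proof is essentially bookkeeping, and every analytic ingredient — the two-sided kernel bounds, the lower bound on $b$, the trace theorem, and the norm equivalence of Theorem \ref{equivalenza norme} — is already available in the excerpt. The only point requiring a small amount of care is making sure that all constants are uniform in $t\in[0,T]$, which holds because $k_1,k_2,\zeta_1,\zeta_2,b_0$ and $\|b\|_\infty$ are $t$-independent; the Hölder-in-$t$ hypotheses \eqref{holderianita nuclei} and the third line of \eqref{ipotesi b} are not needed for this proposition (they will matter later for the construction of the evolution family). I would close by remarking that the same computation shows $E[t,\cdot]$ is a nonnegative closed symmetric form on $H=L^2(\Omega,m)$ with domain $H^s(\Omega)$, which is what is needed to associate the operator $A(t)$.
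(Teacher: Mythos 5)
Your argument is correct and follows essentially the same route as the paper: upper bounds $K\le k_2$, $\|b\|_{L^\infty}$, $\zeta\le\zeta_2$ together with the trace theorem and the norm equivalence of Theorem \ref{equivalenza norme} for continuity, and lower bounds $K\ge k_1$, $b\ge b_0$ with the nonnegative term $\langle\Theta^t_\alpha(\uu),\uu\rangle$ dropped for coercivity. The only cosmetic difference is that you estimate the associated bilinear form term by term via Cauchy--Schwarz, while the paper bounds the quadratic form $E[t,u]$ directly, which is an equivalent formulation.
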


\begin{proof} We start by proving the continuity of $E$. Since $b\in L^\infty([0,T]\times\partial\Omega)$ and $K$ and $\zeta$ are bounded from above, for every $t\in[0,T]$ we have
\begin{equation}\notag
\begin{split}
&E[t,u]\leq k_2|u|^2_{H^s(\Omega)}+\|b\|_{L^\infty([0,T]\times\partial\Omega)}\|u\|^2_{L^2(\partial\Omega)}+\zeta_2\iint_{\partial\Omega\times\partial\Omega}\frac{(u(x)-u(y))^2}{|x-y|^{d+2\alpha}}\,\de\mu(x)\,\de\mu(y)\\[2mm]
&\leq k_2|u|^2_{H^s(\Omega)}+\max\{\|b\|_{L^\infty([0,T]\times\partial\Omega)},\zeta_2\}\|u\|^2_{B^{2,2}_\alpha(\partial\Omega)}\\[2mm]
&\leq\max\{k_2,C\|b\|_{L^\infty([0,T]\times\partial\Omega)},C\zeta_2\}\,\|u\|^2_{H^s(\Omega)},
\end{split}
\end{equation}
where the last inequality follows from the trace theorem.\\
We now prove the coercivity. By using again the hypotheses on $b$, $K$ and $\zeta$, for every $t\in[0,T]$ we have
\begin{equation}\notag
\begin{split}
&E[t,u]\geq k_1|u|^2_{H^s(\Omega)}+b_0\|u\|^2_{L^2(\partial\Omega)}+\langle\Theta^t_\alpha (u),u\rangle\\[2mm]
&\geq\min\{k_1,b_0\}\left(|u|^2_{H^s(\Omega)}+\|u\|^2_{L^2(\partial\Omega)}\right)\geq\beta\|u\|^2_{H^s(\Omega)},
\end{split}
\end{equation}
for a suitable constant $\beta>0$, where the last inequality follows from Theorem \ref{equivalenza norme}.
\end{proof}

\begin{prop}\label{chiusura} For every $t\in[0,T]$, $E[t,u]$ is closed on $L^2(\Omega,m)$.
\end{prop}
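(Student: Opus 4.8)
The plan is to show that $E[t,\cdot]$ is a closed quadratic form on $H=L^2(\Omega,m)$ with domain $D(E)=H^s(\Omega)$, i.e. that $H^s(\Omega)$ equipped with the form norm
\[
\|u\|_{E,t}^2:=E[t,u]+\|u\|_{L^2(\Omega,m)}^2
\]
is complete, and that this is equivalent to the statement that the form is lower semicontinuous along $H$-convergent sequences. The key observation, already available from Proposition \ref{coer}, is that for every fixed $t\in[0,T]$ the form norm $\|\cdot\|_{E,t}$ is equivalent to the usual norm $\|\cdot\|_{H^s(\Omega)}$: the continuity estimate gives $\|u\|_{E,t}^2\le C\|u\|_{H^s(\Omega)}^2$, while the coercivity estimate gives $\beta\|u\|_{H^s(\Omega)}^2\le E[t,u]\le\|u\|_{E,t}^2$. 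So the main content is to transfer the known completeness of $H^s(\Omega)$ to closedness of the form.

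Concretely, I would proceed as follows. First, recall that closedness of $E[t,\cdot]$ means: whenever $\{u_k\}\subset D(E)=H^s(\Omega)$ satisfies $u_k\to u$ in $L^2(\Omega,m)$ and $E[t,u_k-u_j]\to 0$ as $k,j\to\infty$ (a Cauchy condition in the form norm), then $u\in D(E)$ and $E[t,u_k-u]\to 0$. Take such a sequence. By the coercivity in Proposition \ref{coer}, $\{u_k\}$ is Cauchy in $H^s(\Omega)$, hence converges to some $w\in H^s(\Omega)$ by completeness of the fractional Sobolev space. The continuous embedding $H^s(\Omega)\hookrightarrow L^2(\Omega)$ (Theorem \ref{immsobpstar}) and the continuity of the trace operator $\gamma_0\colon H^s(\Omega)\to B^{2,2}_\alpha(\partial\Omega)\hookrightarrow L^2(\partial\Omega)$ (Proposition \ref{teotraccia} together with Theorem \ref{immsobpbar}) imply $u_k\to w$ in $L^2(\Omega)$ and $u_k|_{\partial\Omega}\to w|_{\partial\Omega}$ in $L^2(\partial\Omega)$, hence $u_k\to w$ in $L^2(\Omega,m)$. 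By uniqueness of limits in $L^2(\Omega,m)$ we get $u=w\in H^s(\Omega)=D(E)$. Finally, the continuity estimate applied to $u_k-u$ gives $E[t,u_k-u]\le C\|u_k-u\|_{H^s(\Omega)}^2\to 0$, which is exactly form-convergence. This proves closedness.

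I would also remark that, since $E[t,\cdot]$ is a nonnegative symmetric bilinear form, the form norm defines an inner product, and the argument above shows $(H^s(\Omega),\|\cdot\|_{E,t})$ is a Hilbert space continuously and bijectively comparable to $(H^s(\Omega),\|\cdot\|_{H^s(\Omega)})$; equivalently one may phrase closedness as lower semicontinuity: if $u_k\to u$ in $L^2(\Omega,m)$ with $\liminf_k E[t,u_k]<\infty$, then along a subsequence $\{u_k\}$ is bounded in $H^s(\Omega)$ by coercivity, hence has a weakly convergent subsequence in $H^s(\Omega)$ whose weak limit must be $u$ (weak $H^s$-convergence forces $L^2(\Omega,m)$-weak convergence via the compact or continuous trace and interior embeddings), and then $E[t,u]\le\liminf_k E[t,u_k]$ by weak lower semicontinuity of the nonnegative quadratic form; this again yields $u\in D(E)$ and the desired closedness.

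The proof is essentially soft and the only point requiring a little care is the interaction between the two pieces of the measure $m=\La_N+\mu$: one must make sure that $H^s(\Omega)$-convergence controls both the bulk $L^2(\Omega)$-part and the boundary $L^2(\partial\Omega)$-part of the $L^2(\Omega,m)$-norm simultaneously, which is precisely guaranteed by the continuity of the trace operator in Proposition \ref{teotraccia} and the embedding Theorem \ref{immsobpbar}. I do not expect any genuine obstacle here; the result is an immediate consequence of Propositions \ref{coer}, \ref{teotraccia} and Theorems \ref{immsobpstar}, \ref{immsobpbar}, \ref{equivalenza norme}, and the completeness of $H^s(\Omega)$.
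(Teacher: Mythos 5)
Your proposal is correct and follows essentially the same route as the paper: coercivity of $E[t,\cdot]$ turns form-Cauchy sequences into $H^s(\Omega)$-Cauchy sequences, completeness of $H^s(\Omega)$ plus the continuity of the embedding into $L^2(\Omega)$ and of the trace into $L^2(\partial\Omega)$ identify the limit with the $L^2(\Omega,m)$-limit, and boundedness of the form by the $H^s(\Omega)$-norm gives $E[t,u_k-u]\to 0$. The only cosmetic difference is that you invoke the global continuity estimate of Proposition \ref{coer} at the last step, whereas the paper bounds the bulk, $b$-weighted boundary, and $\Theta^t_\alpha$ terms separately (citing an earlier result for the $b$-term), which amounts to the same estimates.
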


\begin{proof}
For every fixed $t\in[0,T]$, we have to prove that for every sequence $\{u_k\}\subseteq H^s(\Omega)$ such that
\begin{equation}\label{cauchyseq}
E[t,u_k-u_j]+\|u_k-u_j\|_{L^2(\Omega,m)}\to 0\quad\text{for}\,\,k,j\to +\infty,
\end{equation}
there exists $u\in H^s(\Omega)$ such that
\begin{center}
$\displaystyle E[t,u_k-u]+\|u_k-u\|_{L^2(\Omega,m)}\to 0\quad$ for $k\to +\infty$.
\end{center}
This means that we should prove that
\begin{equation}\label{tesi chiusura}
\begin{split}
&|u_k-u|_{s,K}+\int_{\partial\Omega}b(t,P)|u_k-u|^2\,\de\mu\\[2mm]
&+\langle\Theta^t_{\alpha}(u_k-u),u_k-u\rangle+\|u_k-u\|_{L^2(\Omega,m)}\to 0\quad\text{for }k\to +\infty.
\end{split}
\end{equation}
We point out that \eqref{cauchyseq} infers that $\{u_k\}$ is a Cauchy sequence in $L^2(\Omega,m)$ and, since $L^2(\Omega,m)$ is a Banach space, there exists $u\in L^2(\Omega,m)$ such that
\begin{center}
$\|u_k-u\|_{L^2(\Omega,m)}\xrightarrow[k\to +\infty]{} 0$.
\end{center}
Moreover, since $|u_k-u_j|_{s,K}+\|u_k-u_j\|_{L^2(\Omega)}$ is equivalent to the $H^s(\Omega)$-norm of $u_k-u_j$, \eqref{cauchyseq} implies that $\{u_k\}$ is a Cauchy sequence also in $H^s(\Omega)$. Since $H^s(\Omega)$ is a Banach space, then also $\|u_k-u\|^2_{H^s(\Omega)}\to 0$ when $k\to+\infty$. Hence, since $|u_k-u|_{s,K}\leq k_2|u_k-u|^2_{H^s(\Omega)}$, the first term on the left-hand side of \eqref{tesi chiusura} vanishes as $k\to+\infty$.

\noindent From hypotheses \eqref{ipotesi b}, for every fixed $t\in [0,T]$ the thesis follows from \cite[Proposition 4.1]{JEEfraz} for the second term on the left-hand side of \eqref{tesi chiusura}. As to the term $\langle\Theta^t_{\alpha}(u_k-u),u_k-u\rangle$, we point out that from the hypotheses on $\zeta$ and the trace theorem we have 
\begin{equation}\notag
\begin{split}
\langle\Theta^t_\alpha (u_k-u),u_k-u\rangle&=\iint_{\partial\Omega\times\partial\Omega}\zeta(t,x,y)\frac{|u_k(x)-u(x)-(u_k(y)-u(y))|^2}{|x-y|^{d+2\alpha}}\,\de\mu(x)\,\de\mu(y)\\[2mm]
&\leq\zeta_2\|u_k-u\|^2_{B^{2,2}_\alpha (\partial\Omega)}\leq\,C\|u_k-u\|^2_{H^s(\Omega)},
\end{split}
\end{equation}
and the last term tends to 0 when $k\to +\infty$ because $u_k\to u$ in $H^s(\Omega)$.
\end{proof}

\begin{theorem}\label{dirform} For every $t\in[0,T]$, $E[t,u]$ is Markovian, hence it is a Dirichlet form on $L^2(\Omega,m)$.
\end{theorem}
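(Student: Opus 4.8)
The plan is to verify that $E[t,\cdot]$ satisfies the Markovian (unit contraction) property: for every $u\in H^s(\Omega)$, the truncation $\bar u:=0\vee u\wedge 1$ lies in $H^s(\Omega)$ and satisfies $E[t,\bar u]\le E[t,u]$. Once this is established, combining it with Propositions \ref{coer} and \ref{chiusura} (which already give that $E[t,\cdot]$ is a densely defined, closed, nonnegative symmetric bilinear form on $L^2(\Omega,m)$) yields immediately that $E[t,\cdot]$ is a Dirichlet form on $L^2(\Omega,m)$ for each fixed $t\in[0,T]$, by the standard characterization of Dirichlet forms (see e.g.\ Fukushima--Oshima--Takeda).

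The key point is that $E[t,\cdot]$ is a sum of three nonnegative terms, each of which is separately decreased under the unit contraction. First I would recall the elementary pointwise inequality
\begin{equation}\notag
|\bar a-\bar b|\le|a-b|\qquad\text{for all }a,b\in\R,
\end{equation}
where $\bar a=0\vee a\wedge 1$. Applying this with $a=u(x)$, $b=u(y)$ and integrating against the (nonnegative) kernels $C_{N,s}K(t,x,y)|x-y|^{-N-2s}$ on $\Omega\times\Omega$ and $\zeta(t,x,y)|x-y|^{-d-2\alpha}$ on $\partial\Omega\times\partial\Omega$ shows that $|\bar u|_{s,K}\le|u|_{s,K}$ and $\langle\Theta^t_\alpha(\bar u|_{\partial\Omega}),\bar u|_{\partial\Omega}\rangle\le\langle\Theta^t_\alpha(u|_{\partial\Omega}),u|_{\partial\Omega}\rangle$; here one uses that the trace commutes with the truncation, i.e.\ $\bar u|_{\partial\Omega}=0\vee(u|_{\partial\Omega})\wedge 1$ $\mu$-a.e., which follows from the pointwise definition of $\gamma_0$ in Section \ref{spazi funzionali}. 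For the boundary potential term, since $b(t,P)\ge b_0>0$ and $|\bar u|\le|u|$ pointwise we get $\int_{\partial\Omega}b(t,P)|\bar u|^2\,\de\mu\le\int_{\partial\Omega}b(t,P)|u|^2\,\de\mu$. One also needs $\bar u\in H^s(\Omega)$: this is standard, since $|\bar u|\le 1$ gives $\bar u\in L^2(\Omega)$ and the Gagliardo seminorm of $\bar u$ is controlled by that of $u$ via the same pointwise contraction inequality.

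Adding the three estimates gives $E[t,\bar u]\le E[t,u]$, which is precisely the Markovian property; combined with the previously established closedness and density of the domain, this identifies $E[t,\cdot]$ as a Dirichlet form. I do not expect any serious obstacle here: the only mildly delicate point is the compatibility of the trace operator with truncation, but this is immediate from the averaged pointwise definition of $\gamma_0$, and in any case the decrease of each nonlocal term under unit contraction can alternatively be deduced from the fact that the corresponding forms on $\Omega$ and on $\partial\Omega$ (equivalent to the $H^s$-seminorm and the $B^{2,2}_\alpha(\partial\Omega)$-seminorm respectively) are themselves Dirichlet, together with the fact that a nonnegative multiple and a sum of Markovian forms remains Markovian.
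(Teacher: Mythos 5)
Your argument is correct and is essentially the proof the paper has in mind: the paper only cites the adaptation of \cite[Theorem 3.4]{nostroMMAS} and \cite[Lemma 2.7]{warma2015}, which verify exactly this unit-contraction property term by term using the pointwise inequality $|\bar a-\bar b|\le|a-b|$ and the nonnegativity of the kernels and of $b$, combined with closedness. The only point worth stating slightly more carefully is the compatibility $\overline{u}|_{\partial\Omega}=0\vee(\uu)\wedge 1$ $\mu$-a.e., which follows from the fact that quasi-every point is a Lebesgue point in the strong sense (averages of $|u(y)-\gamma_0u(x)|$ tend to $0$), so that the $1$-Lipschitz truncation passes to the limit of means.
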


\medskip

\noindent The proof follows by adapting the one of Theorem 3.4 in \cite{nostroMMAS}, see also \cite[Lemma 2.7]{warma2015}.

\bigskip


\noindent By $E(t,u,v)$ we denote the corresponding bilinear form
\begin{equation}\label{frattalebilineare}
E(t,u,v)=(u,v)_{s,K}+\int_{\partial\Omega} b(t,P) \uu\,\vv \, \de\mu+\langle\Theta^t_{\alpha}(\uu),\vv\rangle
\end{equation}
defined on $[0,T]\times H^s(\Omega)\times H^s(\Omega)$.

\begin{theorem} For every $u,v\in H^s(\Omega)$ and for every $t\in [0,T]$, $E(t,u,v)$ is a closed symmetric bilinear form on $L^2(\Omega,m)$. Then there exists a unique selfadjoint non-positive operator $A(t)$ on $L^2(\Omega,m)$ such that
\begin{equation}\label{corr}
E(t,u,v)=(-A(t)u,v)_{L^2(\Omega,m)}\quad\text{for every }u\in D(A(t)),\,v\in H^s(\Omega),
\end{equation}
where $D(A(t))\subset H^s(\Omega)$ is the domain of $A(t)$ and it is dense in $L^2(\Omega,m)$.
\end{theorem}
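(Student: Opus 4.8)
The plan is to invoke the standard correspondence between closed symmetric forms (bounded below) and selfadjoint operators, see e.g. Kato or the Fukushima--Oshima--Takeda monograph, which is the natural framework here since $E(t,\cdot,\cdot)$ has already been shown to be symmetric, continuous and coercive on $H^s(\Omega)$ (Proposition \ref{coer}) and closed on $L^2(\Omega,m)$ (Proposition \ref{chiusura}). First I would fix $t\in[0,T]$ and record that, by symmetry of $K(t,\cdot,\cdot)$, of $\zeta(t,\cdot,\cdot)$ and of the multiplication by $b(t,\cdot)$, the bilinear form \eqref{frattalebilineare} is symmetric; positivity (indeed strict coercivity) follows from the lower bound established in the proof of Proposition \ref{coer}, namely $E(t,u,u)\geq\beta\|u\|^2_{H^s(\Omega)}\geq\beta\|u\|^2_{L^2(\Omega,m)}$ by Theorem \ref{equivalenza norme} together with the trace bound, so in particular $E(t,\cdot,\cdot)$ is a nonnegative closed symmetric form on the Hilbert space $L^2(\Omega,m)$.

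Next I would apply the representation theorem for such forms: there is a unique selfadjoint operator $A(t)$ on $L^2(\Omega,m)$, with domain $D(A(t))\subset D(E)=H^s(\Omega)$, such that $E(t,u,v)=(-A(t)u,v)_{L^2(\Omega,m)}$ for all $u\in D(A(t))$ and $v\in H^s(\Omega)$, and $A(t)$ is characterised by: $u\in D(A(t))$ iff $u\in H^s(\Omega)$ and there exists $f\in L^2(\Omega,m)$ with $E(t,u,v)=(f,v)_{L^2(\Omega,m)}$ for every $v\in H^s(\Omega)$, in which case $-A(t)u=f$. Since $E(t,u,u)\geq0$, we get $(-A(t)u,u)_{L^2(\Omega,m)}\geq0$, i.e. $A(t)$ is non-positive. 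Density of $D(A(t))$ in $L^2(\Omega,m)$ is part of the representation theorem, and can alternatively be seen from density of $H^s(\Omega)$ in $L^2(\Omega,m)$ (note $\D(\Omega)\subset H^s(\Omega)$ is dense in $L^2(\Omega)$, and the trace part is handled because $\uu$ ranges over a dense subset of $L^2(\partial\Omega)$) combined with the fact that the resolvent range $(I-A(t))^{-1}L^2(\Omega,m)=D(A(t))$ is dense whenever the form is densely defined and closed.

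The only genuine points requiring care, rather than obstacles, are: (i) verifying that the ambient form domain $H^s(\Omega)$ is indeed dense in $L^2(\Omega,m)$ for the measure $m=\La_N+\mu$ with $\mu$ supported on the lower-dimensional set $\partial\Omega$ — here one uses that a function in $H^s(\Omega)$ is specified by its bulk values together with its trace in $B^{2,2}_\alpha(\partial\Omega)$, and by Proposition \ref{teotraccia}(ii) the trace can be prescribed arbitrarily in the dense subspace $B^{2,2}_\alpha(\partial\Omega)\subset L^2(\partial\Omega)$, so pairs (bulk datum, boundary datum) from a dense set of $L^2(\Omega)\times L^2(\partial\Omega)=L^2(\Omega,m)$ are attained; and (ii) making sure the closedness in Proposition \ref{chiusura} is exactly closedness relative to the norm $\big(E(t,u,u)+\|u\|^2_{L^2(\Omega,m)}\big)^{1/2}$, which it is by inspection of \eqref{cauchyseq}. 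Once these are in place the statement is an immediate application of the abstract theorem, so I would keep the proof to a couple of lines and cite \cite{JoWa} or a standard reference for the form--operator correspondence; I do not expect any substantive difficulty beyond bookkeeping.
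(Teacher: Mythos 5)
Your proposal is correct and follows essentially the same route as the paper, which simply invokes the representation theorem for closed symmetric forms bounded below (Theorem 2.1, Chapter 6 in \cite{kato}) on the basis of Propositions \ref{coer} and \ref{chiusura}; your extra checks on density of $H^s(\Omega)$ in $L^2(\Omega,m)$ and on the form norm are harmless bookkeeping. Only note that the reference for the form--operator correspondence should be \cite{kato} (or \cite{fukush}), not \cite{JoWa}, which concerns function spaces on subsets of $\mathbb{R}^n$.
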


\medskip

\noindent For the proof we refer to Theorem 2.1, Chapter 6 in \cite{kato}.

\begin{prop}\label{propertiesenergyform}
For every $t\in[0,T]$, the form $E(t,u,v)$ has the square root property, i.e. $D(A(t))^{\frac{1}{2}}=H^s(\Omega)$. Moreover, there exists a constant $C>0$ such that, for every $\eta\in (\frac{1}{2},1)$,
\begin{equation}\label{holderianita forma in t}
 |E(t,u,v)-E(\tau,u,v)|\leq C|t-\tau|^\eta\|u\|_{H^s(\Omega)}\|v\|_{H^s(\Omega)}, \quad 0\leq\tau,t\leq T.
\end{equation}
\end{prop}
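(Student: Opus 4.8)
The plan is to prove the two assertions separately. For the square root property, the idea is to invoke the Kato square root theorem in the symmetric (form) setting: since $E(t,\cdot,\cdot)$ is a closed, symmetric, coercive bilinear form on $H^s(\Omega)$ with form domain $H^s(\Omega)$ independent of $t$, the associated non-positive selfadjoint operator $A(t)$ satisfies $D((-A(t))^{1/2})=H^s(\Omega)$ with equivalence of norms $\|(-A(t))^{1/2}u\|_{L^2(\Omega,m)}^2 = E(t,u,u) \asymp \|u\|_{H^s(\Omega)}^2$, the last equivalence being exactly Propositions \ref{coer} and \ref{chiusura} together with Theorem \ref{equivalenza norme}. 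This is the standard fact that for a symmetric form the form domain coincides with the domain of the square root of the generator; I would cite \cite{kato} (Chapter 6) or the monograph used elsewhere in the paper, and note that the constants in the norm equivalence can be chosen uniformly in $t\in[0,T]$ because the coercivity and continuity constants $\beta$, $\max\{k_2,C\|b\|_{L^\infty},C\zeta_2\}$ from Proposition \ref{coer} do not depend on $t$.

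For the H\"older estimate \eqref{holderianita forma in t}, I would write the difference $E(t,u,v)-E(\tau,u,v)$ as the sum of three pieces coming from the three terms in \eqref{frattalebilineare}:
\begin{equation}\notag
\begin{split}
E(t,u,v)-E(\tau,u,v)&=\frac{C_{N,s}}{2}\iint_{\Omega\times\Omega}\bigl(K(t,x,y)-K(\tau,x,y)\bigr)\frac{(u(x)-u(y))(v(x)-v(y))}{|x-y|^{N+2s}}\,\de\La_N(x)\de\La_N(y)\\
&\quad+\int_{\partial\Omega}\bigl(b(t,P)-b(\tau,P)\bigr)\uu\,\vv\,\de\mu\\
&\quad+\iint_{\partial\Omega\times\partial\Omega}\bigl(\zeta(t,x,y)-\zeta(\tau,x,y)\bigr)\frac{(u(x)-u(y))(v(x)-v(y))}{|x-y|^{d+2\alpha}}\,\de\mu(x)\,\de\mu(y).
\end{split}
\end{equation}
Each piece is estimated by pulling out the $t$-increment of the coefficient in $L^\infty$: by \eqref{holderianita nuclei} the first and third kernel-differences are bounded by $C|t-\tau|^\eta$, and by the third line of \eqref{ipotesi b} so is $|b(t,P)-b(\tau,P)|$. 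After factoring out $C|t-\tau|^\eta$, the remaining integrals are controlled by Cauchy--Schwarz: the bulk term by $|u|_{H^s(\Omega)}|v|_{H^s(\Omega)}$, the boundary mass term by $\|\uu\|_{L^2(\partial\Omega)}\|\vv\|_{L^2(\partial\Omega)}$, and the boundary nonlocal term by $|\uu|_{B^{2,2}_\alpha(\partial\Omega)}|\vv|_{B^{2,2}_\alpha(\partial\Omega)}$. Applying the trace theorem (Proposition \ref{teotraccia}) to the two boundary contributions bounds everything by $C|t-\tau|^\eta\|u\|_{H^s(\Omega)}\|v\|_{H^s(\Omega)}$, which is \eqref{holderianita forma in t}.

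The routine part is the H\"older estimate, which is just the three-term split plus Cauchy--Schwarz and the trace theorem already available in the excerpt. The only genuinely delicate point is the square root property: one must make sure to use that $E(t,\cdot,\cdot)$ is \emph{symmetric} (so that the Kato square root problem is trivially solvable and $D((-A(t))^{1/2})$ equals the form domain), rather than appealing to the hard non-symmetric Kato conjecture; and one should record that the equivalence constants are $t$-uniform, since this uniformity is what will later be needed to build the evolution family $U(t,\tau)$ and to apply the Acquistapace--Terreni / Tanabe-type theory. I expect this identification of the form domain with $D((-A(t))^{1/2})$, and the citation making it rigorous in the present abstract $L^2(\Omega,m)$ setting, to be the main thing to get right.
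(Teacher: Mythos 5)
Your proposal is correct and follows essentially the same route as the paper: the square root property is obtained from the symmetry and boundedness (closedness/coercivity) of the form, exactly as in the paper's one-line justification, and the H\"older estimate \eqref{holderianita forma in t} is proved by the identical three-term decomposition of $E(t,u,v)-E(\tau,u,v)$ estimated via \eqref{ipotesi b}, \eqref{holderianita nuclei}, Cauchy--Schwarz and the trace theorem. Your extra remarks (Kato's second representation theorem for symmetric forms and the $t$-uniformity of the equivalence constants) only make explicit what the paper leaves implicit.
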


\begin{proof} The square root property follows since the form is symmetric and bounded. As to \eqref{holderianita forma in t}, we have that
\begin{equation}\notag
\begin{split}
&|E(t,u,v)-E(\tau,u,v)|\\[2mm]
&\leq\frac{C_{N,s}}{2}\iint_{\Omega\times\Omega}|K(t,x,y)-K(\tau,x,y)|\frac{|u(x)-u(y)||v(x)-v(y)|}{|x-y|^{N+2s}}\,\de\La_N(x)\de\La_N(y)\\[2mm]
&+\int_{\partial\Omega} |b(t,P)-b(\tau,P)|\,|u(P)|\,|v(P)|\,\de\mu\\[2mm]
&+\iint_{\partial\Omega\times\partial\Omega}|\zeta(t,x,y)-\zeta(\tau,x,y)|\frac{|u(x)-u(y)||v(x)-v(y)|}{|x-y|^{d+2\alpha}}\,\de\mu(x)\de\mu(y)\\[2mm]
&\leq C|t-\tau|^\eta\|u\|_{H^s(\Omega)}\|v\|_{H^s(\Omega)},
\end{split}
\end{equation}
where the last inequality follows from the hypotheses \eqref{ipotesi b} on $b$, from \eqref{holderianita nuclei} and the trace theorem.
\end{proof}

\begin{prop}\label{propertiesoperatorA}
For every $t \in [0,T]$ and $\tau\geq 0$, $A(t)\colon\D(A(t))\to L^2(\Omega,m)$ is the generator of a semigroup $e^{\tau A(t)}$ on $L^2(\Omega,m)$ which is strongly continuous, contractive and analytic with angle $\omega_{A(t)}>0$.
\end{prop}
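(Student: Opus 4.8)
The plan is to invoke the standard correspondence between closed, densely defined, symmetric, coercive bilinear forms and the semigroups they generate, using the structural facts already established. For each fixed $t\in[0,T]$, Proposition \ref{coer} and Proposition \ref{chiusura} say that $E(t,\cdot,\cdot)$ is a continuous, coercive, closed bilinear form on $H^s(\Omega)$ viewed inside the Hilbert space $H=L^2(\Omega,m)$, and by the theorem preceding Proposition \ref{propertiesenergyform} the associated operator $A(t)$ is selfadjoint and non-positive with domain dense in $H$. So the first step is simply to record that $(-A(t))$ is a non-negative selfadjoint operator; by the spectral theorem it generates the analytic contraction semigroup $e^{\tau A(t)}=\int_0^\infty e^{-\tau\lambda}\,\de P_\lambda$, which is automatically strongly continuous on $H$. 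This already gives strong continuity, contractivity, and analyticity.

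The second step is to pin down the analyticity angle. Because $-A(t)$ is non-negative selfadjoint, its spectrum lies in $[0,\infty)$ and its numerical range lies in $[0,\infty)$; hence $A(t)$ is sectorial of angle arbitrarily close to $\pi/2$, so $e^{\tau A(t)}$ extends to a bounded analytic semigroup on the sector $\{|\arg\tau|<\pi/2\}$. Thus one may take any $\omega_{A(t)}\in(0,\pi/2)$; in particular $\omega_{A(t)}>0$ as claimed. It is worth noting that, alternatively, analyticity follows from the fact that $E(t,\cdot,\cdot)$ is a symmetric form, so the generated semigroup is a symmetric submarkovian semigroup (by Theorem \ref{dirform}, $E[t,\cdot]$ is a Dirichlet form), and symmetric strongly continuous contraction semigroups on Hilbert space are always analytic. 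Either route closes the argument.

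Concretely, I would phrase the proof as: fix $t\in[0,T]$; by Propositions \ref{coer} and \ref{chiusura} the form $E(t,\cdot,\cdot)$ is closed, continuous and coercive, and by the theorem above Proposition \ref{propertiesenergyform} the associated operator $A(t)$ is selfadjoint and non-positive on $H$ with dense domain. Then $-A(t)\ge 0$ is selfadjoint, so by the spectral theorem (or e.g.\ \cite{kato}, Chapter IX, together with the Lumer--Phillips theorem for contractivity) $A(t)$ generates a strongly continuous contraction semigroup $e^{\tau A(t)}$ on $L^2(\Omega,m)$; since the spectrum of $-A(t)$ is contained in $[0,\infty)$, this semigroup is analytic in the open right half-plane, i.e.\ analytic of angle $\omega_{A(t)}>0$.

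The main obstacle here is essentially bookkeeping rather than mathematics: everything reduces to results already in hand, and the only subtlety is to make sure the coercivity constant $\beta>0$ from Proposition \ref{coer} is used to get $-A(t)$ bounded below (in fact bounded below by $\beta$ times the form norm relative to the $H$-norm, so $-A(t)$ is even strictly positive), which is what guarantees $0$ is in the resolvent set and makes the contraction/analyticity statements uniform. Strictly speaking, for a non-autonomous family one would eventually also want uniformity of constants in $t$, but that is not asserted in this statement and follows later from \eqref{holderianita forma in t}; here it suffices to argue pointwise in $t$.
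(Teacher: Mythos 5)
Your argument is correct, but it takes a slightly different route than the paper. You exploit the selfadjointness and non-positivity of $A(t)$ (from the representation theorem preceding Proposition \ref{propertiesenergyform}) and conclude everything at once from the spectral theorem: $-A(t)\geq 0$ selfadjoint generates a strongly continuous contraction semigroup that is analytic on the half-plane, so the angle is in fact $\pi/2$, which is more than the claimed $\omega_{A(t)}>0$; your alternative remark that a symmetric sub-Markovian (by Theorem \ref{dirform}) contraction semigroup is automatically analytic is also sound. The paper instead argues property by property through form/semigroup theory without invoking selfadjointness: analyticity is deduced from the coercivity of $E[t,\cdot]$ (citing Showalter \cite{showalter}), contractivity from the Lumer--Phillips theorem \cite{pazy}, and strong continuity from the Dirichlet-form theory of Fukushima--Oshima--Takeda \cite{fukush}. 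Your spectral-theorem route is more self-contained and gives the sharper angle, while the paper's route has the advantage of being the one that survives if the form were perturbed to a non-symmetric (merely sectorial, coercive) one, where selfadjointness is unavailable; since the form here is symmetric and closed, both arguments are complete, and your observation that coercivity makes $-A(t)$ strictly positive (so $0\in\rho(A(t))$) matches what the paper later records in Proposition \ref{propertiesAstructutural}.
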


\begin{proof}
The analyticity follows from the coercivity of $E[t,u]$ (see Theorem 6.2, Chapter 4 in \cite{showalter}). The contraction property follows from Lumer-Phillips Theorem (see Theorem 4.3, Chapter 1 in \cite{pazy}). The strong continuity follows from Theorem 1.3.1 in \cite{fukush}
\end{proof}


\begin{proposition}\label{propertiesAstructutural}
For every $t\in [0,T]$, the operator $A(t)$ satisfies the following \mbox{properties}:
\begin{enumerate}
  \item[1)] the spectrum of $A(t)$ is contained in a sectorial open domain
	$$\sigma(A(t))\subset \Sigma_\omega=\{\mu\in \C\,:\,|\rm{Arg}\,\mu|<\omega\}$$ for some fixed angle $0<\omega<\frac{\pi}{2}$.
	The resolvent satisfies the estimate
	$$\|\left(\mu-A(t)\right)^{-1}\|_{\Lm(L^2(\Omega,m))}\leq\frac{M}{|\mu|}$$ for $M\geq 1$ independent from t and $\mu\notin \Sigma_\omega \cup 0$;
	moreover, $A(t)$ is invertible and $\|A(t)^{-1}\|\leq M_1$ with $M_1$ independent from $t$;

  \item[2)] $D(A(t))\subset D(A(\tau))^{\frac{1}{2}}=H^s(\Omega),\quad 0\leq\tau\leq t\leq T$; in particular, $D(A(t))\subset D(A(\tau))^{\nu}$ for every $\nu$ such that $0<\nu\leq \frac{1}{2}$;

  \item[3)] $A(t)^{-1}$ is H\"older continuous in $t$ in the sense of Yagi, i.e.,
  \begin{equation}\label{HolderA(t)-1}
  \left\|A(t)^{\frac{1}{2}} \left(A(t)^{-1}-A(\tau)^{-1}\right)\right\|_{\Lm(L^2(\Omega,m))}\leq C|t-\tau|^\eta,
  \end{equation}
  with some fixed exponent $\eta\in \left(\frac{1}{2},1\right]$ and $C>0$.
\end{enumerate}
\end{proposition}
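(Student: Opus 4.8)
The plan is to verify the three properties one at a time, deriving each from results already established for the family $\{A(t)\}$ — coercivity and continuity of $E[t,\cdot]$ (Proposition \ref{coer}), the square root property and Hölder continuity in $t$ (Proposition \ref{propertiesenergyform}), and the generation and analyticity statement (Proposition \ref{propertiesoperatorA}). For property 1), I would argue as follows. Since $E[t,u,v]$ is a closed symmetric coercive form, $-A(t)$ is a nonnegative selfadjoint operator with $E(t,u,u)\geq\beta\|u\|^2_{H^s(\Omega)}\geq\beta'\|u\|^2_{L^2(\Omega,m)}$ by Theorem \ref{equivalenza norme}; hence $0\notin\sigma(A(t))$ and $A(t)$ is invertible with $\|A(t)^{-1}\|_{\Lm(L^2(\Omega,m))}\leq\beta'^{-1}=:M_1$, a bound independent of $t$ because $\beta$ in Proposition \ref{coer} depends only on $k_1$, $b_0$ and $\Omega$, not on $t$. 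For the sectoriality and the resolvent estimate I would invoke the standard fact (e.g.\ Ouhabaz, or Theorem~6.2, Chapter~4 in \cite{showalter}, already cited) that a selfadjoint operator associated with a symmetric, continuous and coercive form generates an analytic semigroup, and that the sector angle and resolvent constant $M$ depend only on the continuity constant and the coercivity constant of the form; both of these are uniform in $t$ by the computations in the proof of Proposition \ref{coer} (the upper bound $\max\{k_2,C\|b\|_{L^\infty},C\zeta_2\}$ and the lower bound $\beta$ are $t$-independent). Therefore one fixed angle $0<\omega<\frac{\pi}{2}$ and one constant $M\geq 1$ work for all $t\in[0,T]$.

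Property 2) is essentially immediate: by the square root property in Proposition \ref{propertiesenergyform}, $D(A(t)^{\frac12})=H^s(\Omega)$ for every $t$, and since $D(A(t))\subset D(A(t)^{\frac12})=H^s(\Omega)$ trivially, we get $D(A(t))\subset D(A(\tau)^{\frac12})=H^s(\Omega)$ for all $\tau,t$ — the effective domain $H^s(\Omega)$ of the form does not depend on $t$, which is what makes this work. The inclusion $D(A(t))\subset D(A(\tau)^{\nu})$ for $0<\nu\leq\frac12$ then follows from the interpolation inequality $D(A(\tau)^{\frac12})\subset D(A(\tau)^{\nu})$ valid for nonnegative selfadjoint operators and $\nu\leq\frac12$, together with the moment/interpolation inequality for fractional powers.

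Property 3) is where the real work lies. The goal is the Yagi-type estimate \eqref{HolderA(t)-1}. I would start from the second resolvent-type identity $A(t)^{-1}-A(\tau)^{-1}=A(t)^{-1}\bigl(A(\tau)-A(t)\bigr)A(\tau)^{-1}$, interpreted via the forms: for $f,g\in L^2(\Omega,m)$, writing $u=A(\tau)^{-1}f$ and testing, one gets
\begin{equation}\notag
\bigl((A(t)^{-1}-A(\tau)^{-1})f,g\bigr)_{L^2(\Omega,m)}=E(\tau,A(\tau)^{-1}f,A(t)^{-1}g)-E(t,A(\tau)^{-1}f,A(t)^{-1}g),
\end{equation}
so that the left-hand side is controlled by $|E(t,\cdot,\cdot)-E(\tau,\cdot,\cdot)|$ evaluated at $A(\tau)^{-1}f$ and $A(t)^{-1}g$. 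Applying \eqref{holderianita forma in t} gives a bound by $C|t-\tau|^\eta\|A(\tau)^{-1}f\|_{H^s(\Omega)}\|A(t)^{-1}g\|_{H^s(\Omega)}$. Now using the square root property, $\|A(\tau)^{-1}f\|_{H^s(\Omega)}\sim\|A(\tau)^{\frac12}A(\tau)^{-1}f\|_{L^2(\Omega,m)}=\|A(\tau)^{-\frac12}f\|_{L^2(\Omega,m)}\leq M_1^{1/2}\|f\|_{L^2(\Omega,m)}$, and similarly $\|A(t)^{-1}g\|_{H^s(\Omega)}\leq C\|A(t)^{\frac12}A(t)^{-1}g\|_{L^2(\Omega,m)}$; to land exactly on the norm $\|A(t)^{\frac12}(A(t)^{-1}-A(\tau)^{-1})\|$ appearing on the left of \eqref{HolderA(t)-1}, I would instead choose $g$ so that $A(t)^{-1}g$ is replaced by a fixed test function $v$ with $\|v\|_{H^s(\Omega)}\leq 1$ and rewrite the pairing as $\langle A(t)^{\frac12}(A(t)^{-1}-A(\tau)^{-1})f, A(t)^{-\frac12}(\cdot)\rangle$, absorbing one $A(t)^{-\frac12}$ via the square root property into an $H^s$-bound; the uniform bounds on $\|A(t)^{-1/2}\|$ from property 1) then make all constants $t$-independent. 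The main obstacle is precisely this bookkeeping: distributing the fractional powers $A(t)^{\pm\frac12}$ correctly between the two form-difference slots so that what remains is exactly the operator norm in \eqref{HolderA(t)-1}, while keeping every constant uniform in $t$ — this is the point where one must use the square root property, the uniform coercivity, and the $t$-independence of $M_1$ all at once. This is standard in the Kato–Tanabe–Yagi theory (cf.\ \cite{Daners} and \cite{LVnonaut}), and the argument is a direct adaptation; no new idea beyond combining the already-proven Propositions \ref{coer}, \ref{propertiesenergyform} and \ref{propertiesoperatorA} is needed.
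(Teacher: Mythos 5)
Your proposal is correct, and for parts 1) and 2) it simply fills in the details that the paper dispatches in one line by citing Propositions \ref{propertiesenergyform} and \ref{propertiesoperatorA}. For part 3) you take the same core step as the paper — the difference of inverses is rewritten, through \eqref{corr} and symmetry, as a difference of forms $E(t,\cdot,\cdot)-E(\tau,\cdot,\cdot)$ evaluated at $A(\tau)^{-1}f$ in one slot, so that \eqref{holderianita forma in t} yields the factor $|t-\tau|^\eta$ (your version has the two forms in the opposite order, which is immaterial since only the modulus is used) — but you handle the half power differently. The paper introduces the extrapolated operator $\mathcal{A}(t)\colon H^s(\Omega)\to H^{-s}(\Omega)$, proves H\"older continuity of $\mathcal{A}(t)[\mathcal{A}(t)^{-1}-\mathcal{A}(\tau)^{-1}]$ in $\Lm(H^{-s}(\Omega))$ using the bound $\|\mathcal{A}(\tau)^{-1}\|_{\Lm(H^{-s}(\Omega)\to H^{s}(\Omega))}\leq C$ from Laasri, and then returns to $L^2(\Omega,m)$ via $A^{\frac12}(\cdot)z=\mathcal{A}^{-\frac12}(\cdot)\mathcal{A}(\cdot)z$ and the adjoint $(\mathcal{A}(t)^{-\frac12})'$; you instead stay in $L^2(\Omega,m)$ and distribute $A(t)^{\pm\frac12}$ by duality, using the square root property together with $E(t,v,v)=\|A(t)^{\frac12}v\|^2_{L^2(\Omega,m)}\geq\beta\|v\|^2_{H^s(\Omega)}$ and $\|A(\tau)^{-1}\|\leq M_1$ from part 1). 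Both routes rest on the same two ingredients (the form estimate \eqref{holderianita forma in t} and a $t$-uniform bound on an inverse); yours is more self-contained, avoiding the dual-space operator and the external citation, and makes the $t$-uniformity of all constants explicit, while the paper's stays closer to the Yagi/Laasri framework it invokes elsewhere. The only precision your sketch needs is in the duality step: rather than testing against an arbitrary $v\in H^s(\Omega)$ with $\|v\|_{H^s(\Omega)}\leq 1$ (which would only give a bound in the dual of $H^s(\Omega)$), one takes $g$ in the unit ball of $L^2(\Omega,m)$ intersected with $H^s(\Omega)$ and puts $A(t)^{-\frac12}g$ in the test slot, whose $H^s$-norm is at most $\beta^{-\frac12}$, while the other slot is bounded by $\|A(\tau)^{-1}f\|_{H^s(\Omega)}\leq\beta^{-\frac12}\|A(\tau)^{-\frac12}f\|_{L^2(\Omega,m)}\leq\beta^{-\frac12}M_1^{\frac12}\|f\|_{L^2(\Omega,m)}$; this is exactly what your displayed pairing indicates, so the argument closes as you intend.
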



\begin{proof} The first two properties follow from Propositions \ref{propertiesenergyform} and \ref{propertiesoperatorA}. In order to prove the H\"older continuity, one can proceed as in \cite[Chapter 3, section 7.1]{Yagi}

Let $\mathcal{A}(t)\colon H^s(\Omega)\to H^{-s}(\Omega)$ denote the sectorial operator with angle $\omega_{\mathcal{A}(t)}\leq \omega_A <\frac{\pi}{2}$ associated with $E(t,u,v)$,
$$E(t,u,v)=-\langle\mathcal{A}(t)u, v\rangle_{H^{-s}(\Omega),H^s(\Omega)},$$
with $u\in D(\mathcal{A}(t))=H^s(\Omega)$.

Let $\phi\in H^{-s}(\Omega)$ and $u\in H^s(\Omega)$. We have that
\begin{equation}\notag
\begin{split}
&\langle\mathcal{A}(t)[\mathcal{A}(t)^{-1}-\mathcal{A}(\tau)^{-1}]\phi,u\rangle_{H^{-s}(\Omega),H^s(\Omega)}\\
=&-\langle[\mathcal{A}(t)-\mathcal{A}(\tau)]\mathcal{A}(\tau)^{-1}\phi,u\rangle_{H^{-s}(\Omega),H^s(\Omega)}\\
=&E(t,\mathcal{A}(\tau)^{-1}\phi, u)-E(\tau,\mathcal{A}(\tau)^{-1}\phi, u).
\end{split}
\end{equation}
From \eqref{holderianita forma in t}, we obtain
\begin{equation}\notag
\|\mathcal{A}(t)[\mathcal{A}(t)^{-1}-\mathcal{A}(\tau)^{-1}]\phi\|_{H^{-s}(\Omega)}\leq C|t-\tau|^\eta\|\mathcal{A}(\tau)^{-1}\|_{\Lm(H^{-s}(\Omega)\to H^{s}(\Omega))}\|\phi\|_{H^{-s}(\Omega)}.
\end{equation}
From \cite[page 190]{Laasri}, we have that $\|\mathcal{A}(\tau)^{-1}\|_{\Lm(H^{-s}(\Omega)\to H^{s}(\Omega))}\leq C$ for a suitable positive constant $C$. Hence, we conclude that $$\|\mathcal{A}(t)[\mathcal{A}(t)^{-1}-\mathcal{A}(\tau)^{-1}]\phi\|_{H^{-s}(\Omega)}\leq C|t-\tau|^\eta\|\phi\|_{H^{-s}(\Omega)}.$$
In order to prove condition \eqref{HolderA(t)-1}, we note that for every $z\in H^s(\Omega)$ it holds that $A^{\frac{1}{2}}(\cdot)z= \mathcal{A}^{-\frac{1}{2}}(\cdot)\mathcal{A}(\cdot)z$. Therefore, we have
\begin{equation}\notag
\begin{split}
(A(t)^{\frac{1}{2}}[A(t)^{-1}-A(\tau)^{-1}]\phi,u)_{L^2(\Omega,m)}= E\left(t,A(\tau)^{-1}\phi,({\mathcal A}(t)^{-\frac{1}{2}})'u\right)\\
-E\left(\tau,A(\tau)^{-1}\phi,({\mathcal A}(t)^{-\frac{1}{2}})'u\right).
\end{split}
\end{equation}
Since adjoint operators have the same norm, taking into account \cite[page 190]{Laasri}, condition \eqref{HolderA(t)-1} holds.
\end{proof}


\medskip

\noindent From the above results we deduce the following.

\begin{theo}\label{theorem1} For every $t\in [0,T]$, let $A(t)\colon D(A(t))\to L^2(\Omega,m)$ be the linear unbounded operator defined in \eqref{corr}. Then there exists a unique family of evolution operators $U(t,\tau)\in \Lm(L^2(\Omega,m))$ such that
\begin{enumerate}\label{propU}
  \item[1)] $U(\tau,\tau)=\rm{Id},\quad 0\leq\tau\leq T$;
  \item[2)] $U(t,\tau)U(\tau,\sigma)=U(t,\sigma),\quad 0\leq\sigma\leq\tau\leq  t\leq T$;
  \item[3)] for every $0\leq\tau\leq t\leq T$ one has
		\begin{equation}\label{estimatesU}
		\|U(t,\tau)\|_{\Lm(L^2(\Omega,m))}\leq 1,
		\end{equation}
	and $U(t,\tau)$ for $0\leq\tau\leq t<T$ is a strongly contractive family on $L^2(\Omega,m)$;
  \item[4)] the map $t\mapsto U(t,\tau)$ is differentiable in $(\tau,T]$ with values in $\Lm(L^2(\Omega,m))$ and $\frac{\partial U(t,\tau)}{\partial t}= A(t)U(t,\tau)$;
	\item[5)] $A(t)U(t,\tau)$ is a $\Lm(L^2(\Omega,m))$-valued continuous function for $0\leq\tau<t\leq T$. Moreover, there exists a constant $C>0$ such that
	\begin{equation}\label{estimate AU}
	\|A(t)U(t,\tau)\|_{\Lm(L^2(\Omega,m))}\leq\frac{C}{t-\tau},\quad 0\leq\tau<t<T.
	\end{equation}
\end{enumerate}
\end{theo}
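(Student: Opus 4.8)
The plan is to derive Theorem~\ref{theorem1} from the general theory of non-autonomous evolution equations of parabolic type in the sense of Kato--Tanabe--Yagi, using the structural properties of the family $A(t)$ collected in Proposition~\ref{propertiesAstructutural}. The three conditions there---uniform sectoriality with a resolvent bound $M/|\mu|$ independent of $t$, the inclusion $D(A(t))\subset D(A(\tau))^{1/2}=H^s(\Omega)$, and the H\"older continuity \eqref{HolderA(t)-1} of $A(t)^{-1}$ in the sense of Yagi with exponent $\eta\in(\tfrac12,1]$---are exactly the hypotheses under which an evolution family can be constructed. So the first step is to invoke this construction (e.g.\ \cite{Yagi}, Chapter~3, or the classical Tanabe--Sobolevskii parametrix method): one writes $U(t,\tau)$ as a perturbation series built from the frozen-coefficient semigroups $e^{(t-\tau)A(\tau)}$, correcting the mismatch $A(t)-A(\tau)$ via a Volterra integral equation whose kernel is controlled by the H\"older estimate. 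Convergence of this series in $\Lm(L^2(\Omega,m))$ gives existence, and properties 1), 2), 4) and the continuity/analyticity-type bound \eqref{estimate AU} in property 5) come out of the standard estimates on the parametrix, using that each $e^{\tau A(t)}$ is analytic (Proposition~\ref{propertiesoperatorA}) so that $\|A(t)e^{\sigma A(t)}\|\leq C/\sigma$.

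Next I would address the contractivity claims, \eqref{estimatesU} and the assertion that $\{U(t,\tau)\}$ is a strongly contractive family---this is the one place where the general Yagi machinery, which typically only yields $\|U(t,\tau)\|\leq C$, is not enough and the Dirichlet-form structure must be used. Here I would argue as follows: for each fixed $t$, $E[t,\cdot]$ is a Dirichlet form (Theorem~\ref{dirform}) and $A(t)$ generates a Markovian, sub-Markovian contraction semigroup on $L^2(\Omega,m)$ (Proposition~\ref{propertiesoperatorA}). The evolution family $U(t,\tau)$ is characterized variationally: $u(t)=U(t,\tau)\phi$ is the unique solution of $\frac{d}{dt}(u(t),v)_H=-E(t,u(t),v)$ for all $v\in H^s(\Omega)$, $u(\tau)=\phi$. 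Testing with $v=u(t)$ and using the coercivity/positivity of $E[t,\cdot]$ (Proposition~\ref{coer}, which gives $E[t,u]\geq 0$) yields $\frac{d}{dt}\|u(t)\|_H^2=-2E(t,u(t),u(t))\leq 0$, hence $\|U(t,\tau)\phi\|_H\leq\|\phi\|_H$, which is \eqref{estimatesU}. Strong continuity of $\tau\mapsto U(t,\tau)\phi$ and $t\mapsto U(t,\tau)\phi$ follows from the same energy estimate combined with the density of $D(A(\tau))$ in $H$ and the regularizing bound \eqref{estimate AU}.

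Finally I would assemble property 5) in full: the bound $\|A(t)U(t,\tau)\|\leq C/(t-\tau)$ is obtained by differentiating the parametrix representation and estimating term by term, each contribution being of the form $A(t)e^{(t-\sigma)A(\sigma)}$ (bounded by $C/(t-\sigma)$) convolved against a H\"older-bounded kernel, so that the singularities integrate to the stated $1/(t-\tau)$ rate; continuity of $(\tau,t)\mapsto A(t)U(t,\tau)$ on $0\le\tau<t\le T$ follows from dominated convergence applied to the same series. Uniqueness of the evolution family is standard: if $\tilde U$ is another family with properties 1)--5), then $w(t)=U(t,\sigma)\phi-\tilde U(t,\sigma)\phi$ solves the homogeneous Cauchy problem with zero initial datum, and the energy estimate $\frac{d}{dt}\|w(t)\|_H^2\le 0$ together with $w(\sigma)=0$ forces $w\equiv 0$.

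The main obstacle I anticipate is \emph{not} the abstract construction---that is essentially a citation to \cite{Yagi}---but rather reconciling the two descriptions of $U(t,\tau)$: the analytic parametrix (needed for the smoothing estimate \eqref{estimate AU} and differentiability in property 4)) and the variational/form description (needed for the sharp contraction constant $1$ in \eqref{estimatesU} and for the Markov property to play any role). One must check that these coincide, i.e.\ that the parametrix solution is indeed the variational solution; this is where the square root property $D(A(t))^{1/2}=H^s(\Omega)$ from Proposition~\ref{propertiesenergyform}, uniform in $t$, is doing the real work, since it identifies the ``interpolation space'' in which the mismatch $A(t)-A(\tau)$ acts boundedly. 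Once that identification is in place, all five properties drop out of estimates that are by now routine in the non-autonomous parabolic literature.
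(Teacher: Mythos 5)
Your proposal is correct and follows essentially the same route as the paper: the paper proves this theorem purely by citation to Section 5.3 of \cite{Yagi}, relying on the structural properties of $A(t)$ established in Proposition \ref{propertiesAstructutural}, which is exactly the machinery you invoke. Your additional sketch of the parametrix construction, and the Dirichlet-form energy estimate yielding the contraction constant $1$ in \eqref{estimatesU} (a point the general Yagi theory alone would not give and which the paper leaves implicit), simply fills in details behind that citation rather than taking a different path.
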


\noindent For the proof, see Section 5.3 in \cite{Yagi}.








We now consider the abstract Cauchy problem
\begin{equation}\label{cauchyproblem for U}
\begin{cases}
\frac{\partial u(t)}{\partial t}=A(t)u(t)\quad\text{for $t\in(0,T]$},\\
u(0)=u_0,
\end{cases}
\end{equation}
with $u_0\in L^2(\Omega,m)$.

\begin{theo} For every $u_0\in L^2(\Omega,m)$ there exists a unique $u\in C([0,T]; L^2(\Omega,m))\cap C^1((0,T]; L^2(\Omega,m))$, such that $A(t)u\in C((0,T];L^2(\Omega,m))$ and
$$\|u(t)\|_{L^2(\Omega,m)}+t\left\|\frac{\partial u(t)}{\partial t}\right\|_{L^2(\Omega,m)}+t\|A(t)u(t)\|_{L^2(\Omega,m)}\leq C\|u_0\|_{L^2(\Omega,m)},$$
for $0<t\leq T$, where $C$ is a positive constant.
Moreover, one has $u(t)=U(t,0)u_0$.
\end{theo}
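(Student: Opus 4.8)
The plan is to take $u(t):=U(t,0)u_0$, with $U(t,\tau)$ the evolution family furnished by Theorem~\ref{theorem1}, to verify that it has all the asserted properties, and then to establish uniqueness by an energy argument; the representation formula will then be a byproduct.

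First I would read off existence and regularity directly from Theorem~\ref{theorem1}. Property~1) gives $u(0)=U(0,0)u_0=u_0$, and the strong continuity of the evolution family (built into its construction in~\cite{Yagi}) yields $u\in C([0,T];L^2(\Omega,m))$. For $t\in(0,T]$, property~4) shows that $t\mapsto u(t)$ is differentiable with $\frac{\partial u}{\partial t}=A(t)U(t,0)u_0=A(t)u(t)$, so that $u$ solves \eqref{cauchyproblem for U} and lies in $C^1((0,T];L^2(\Omega,m))$; property~5) then gives $A(t)u(\cdot)=A(t)U(\cdot,0)u_0\in C((0,T];L^2(\Omega,m))$. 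The a priori estimate follows by combining \eqref{estimatesU} (giving $\|u(t)\|_{L^2(\Omega,m)}\leq\|u_0\|_{L^2(\Omega,m)}$) with \eqref{estimate AU} (giving $t\|A(t)u(t)\|_{L^2(\Omega,m)}\leq C\|u_0\|_{L^2(\Omega,m)}$, extending the construction to some $[0,T+\varepsilon]$ if one wants the bound at $t=T$ as well), noting that $t\|\partial_t u(t)\|_{L^2(\Omega,m)}$ equals the same quantity.

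Next I would prove uniqueness by a dissipativity estimate exploiting the non-positivity of $A(t)$. Given two solutions $u_1,u_2$ with the stated regularity, put $w:=u_1-u_2$; then $w\in C([0,T];L^2(\Omega,m))\cap C^1((0,T];L^2(\Omega,m))$, $w(0)=0$, $w(t)\in D(A(t))\subset H^s(\Omega)$ and $w'(t)=A(t)w(t)$ for $t\in(0,T]$. Since $t\mapsto\|w(t)\|_{L^2(\Omega,m)}^2$ is then $C^1$ on $(0,T]$, using the correspondence \eqref{corr} with $v=w(t)$ together with the coercivity (hence non-negativity) of $E$ from Proposition~\ref{coer}, I obtain
\[
\frac{\de}{\de t}\,\|w(t)\|_{L^2(\Omega,m)}^2=2\bigl(A(t)w(t),w(t)\bigr)_{L^2(\Omega,m)}=-2\,E(t,w(t),w(t))\leq 0 .
\]
Hence $\|w(t)\|_{L^2(\Omega,m)}^2$ is non-increasing on $(0,T]$; by continuity up to $t=0$ and $w(0)=0$ this forces $w\equiv0$, i.e.\ $u_1=u_2$. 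Since $U(\cdot,0)u_0$ is itself a solution, it must coincide with the unique solution, which gives $u(t)=U(t,0)u_0$.

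The argument is genuinely short once Theorem~\ref{theorem1} is available, so the only points needing care are technical: the continuity of $U(t,0)u_0$ as $t\downarrow0$ for a general datum $u_0\in L^2(\Omega,m)$ (not lying in any domain), which must be quoted from the construction of the evolution family rather than re-derived, and the fact that \eqref{estimate AU} is stated only for $t<T$, which is circumvented by enlarging the time interval slightly. I do not anticipate any substantive obstacle beyond the bookkeeping of these two points.
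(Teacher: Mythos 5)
Your argument is correct, but it is not the route the paper takes: the paper offers no proof at all for this statement, simply citing Theorem~3.9 of \cite{Yagi}, which covers general (not necessarily symmetric) sectorial families and also the inhomogeneous problem. What you do instead is reconstruct the result from the properties of $U(t,\tau)$ already recorded in Theorem~\ref{theorem1}: existence, the regularity $u\in C^1((0,T];L^2(\Omega,m))$ with $A(t)u\in C((0,T];L^2(\Omega,m))$, and the a priori bound all follow from items 1), 3), 4), 5) and \eqref{estimatesU}, \eqref{estimate AU}, while uniqueness is obtained by a dissipativity estimate that exploits the specific variational structure of this paper --- $A(t)$ self-adjoint and non-positive via \eqref{corr} and the coercivity of $E$ from Proposition~\ref{coer} --- rather than the general backward-derivative identity $\partial_\tau U(t,\tau)=-U(t,\tau)A(\tau)$ used in Yagi's abstract setting. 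This buys a short, self-contained proof in the symmetric case at the price of less generality. Two technical points you flag deserve the care you give them: strong continuity of $t\mapsto U(t,0)u_0$ at $t=0$ for a general $u_0\in L^2(\Omega,m)$ is not literally contained in the statement of Theorem~\ref{theorem1} and must indeed be quoted from the construction of the evolution family; and the bound \eqref{estimate AU} at the endpoint $t=T$ can be obtained more simply than by enlarging the interval, since item 5) gives continuity of $A(t)U(t,\tau)$ up to $t=T$, so the estimate $\|A(T)U(T,0)\|_{\Lm(L^2(\Omega,m))}\leq C/T$ follows by letting $t\to T^-$. With these observations your proof is complete and compatible with the statement as given.
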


\noindent For the proof, see Theorem 3.9 in \cite{Yagi}.

\section{Ultracontractivity property}\label{sec4}
\setcounter{equation}{0}

In this section we investigate the regularity of the evolution family $U(t,\tau)$. We set $\Xi:=\{(t,\tau)\in(0,T)^2\,:\,\tau<t\}$.

We recall some useful definitions, specialized to our setting.
\begin{definition}\label{proprieta U} An evolution family $\{U(t,\tau)\}_{(t,\tau)\in\Xi}$ on $L^2(\Omega,m)$ is
\begin{itemize}
	\item[1)] \emph{positive preserving} if for every $0\leq u\in L^2(\Omega,m)$ one has $U(t,\tau)u\geq 0$ for every $(t,\tau)\in\Xi$;
	\item[2)] \emph{$L^p$-contractive}, for $1\leq p\leq+\infty$, if $U(t,\tau)$ maps the set $\{u\in L^2(\Omega,m)\cap L^p(\Omega,m)\,:\,\|u\|_{L^p(\Omega,m)}\leq 1\}$ into itself for every $(t,\tau)\in\Xi$;
	\item[3)] \emph{completely contractive} if it is both $L^1$-contractive and $L^\infty$-contractive;
	\item[4)] \emph{sub-Markovian} if it is positive preserving and $L^\infty$-contractive;
	\item[5)] \emph{Markovian} if it is sub-Markovian and $\|U(t,\tau)\|_{\Lm(L^\infty(\Omega,m))}=1$.
\end{itemize}
\end{definition}

\noindent We point out that, since for every $t\in[0,T]$ the energy form $E[t,u]$ is Markovian by Theorem \ref{dirform}, the associated evolution family $U(t,\tau)$ is Markovian. In particular, this implies that the evolution family $U(t,\tau)$ is positive preserving and $L^\infty$-contractive.

Moreover, since for every $t\in[0,T]$ the bilinear form $E(t,u,v)$ is symmetric, it follows that $U(t,\tau)$ is also $L^1$-contractive, hence the evolution family is completely contractive. Therefore, by the Riesz-Thorin theorem, $\{U(t,\tau)\}_{(t,\tau)\in\Xi}$ is $L^p$-contractive for every $p\in[1,+\infty]$ and the following result holds.


\begin{theo}\label{theorem3}
For every $p\in [1,+\infty]$ there exists an operator $U_p(t,\tau)\in\Lm(L^p(\Omega,m))$ such that
$$U_p(t,\tau)u_0=U(t,\tau)u_0\quad\text{for every }(t,\tau)\in\Xi\,,\,\text{for every }u_0\in  L^p(\Omega,m)\cap L^2(\Omega,m).$$
Moreover, for every $\tau\geq 0$ the map $U_p(\cdot,\tau)$ is strongly continuous from $(\tau,\infty)$ to $\Lm(L^p(\Omega,m))$ for every $t\geq\tau$ and
\begin{equation}\label{Stima Contrazione U}
\|U_p(t,\tau)\|_{\Lm(L^p(\Omega,m))}\leq 1\quad\text{for every }p\geq 1.
\end{equation}
\end{theo}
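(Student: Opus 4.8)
The plan is to construct the operators $U_p(t,\tau)$ by the standard density-and-interpolation argument already prepared by the preceding discussion. First I would fix $p\in[1,+\infty]$ and observe that, by the $L^p$-contractivity established above (via Markovianity, symmetry, and Riesz--Thorin), for every $(t,\tau)\in\Xi$ the operator $U(t,\tau)$ maps the dense subspace $L^p(\Omega,m)\cap L^2(\Omega,m)$ of $L^p(\Omega,m)$ into $L^p(\Omega,m)$, and satisfies $\|U(t,\tau)u_0\|_{L^p(\Omega,m)}\leq\|u_0\|_{L^p(\Omega,m)}$ on that subspace. Hence $U(t,\tau)$ restricted to $L^p(\Omega,m)\cap L^2(\Omega,m)$ is a linear contraction for the $L^p$-norm and, since $L^p(\Omega,m)\cap L^2(\Omega,m)$ is dense in $L^p(\Omega,m)$ for every finite $p$, it extends uniquely to a bounded linear operator $U_p(t,\tau)\in\Lm(L^p(\Omega,m))$ with $\|U_p(t,\tau)\|_{\Lm(L^p(\Omega,m))}\leq 1$; this gives \eqref{Stima Contrazione U}. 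The case $p=\infty$ is handled separately: here $L^\infty(\Omega,m)\cap L^2(\Omega,m)$ need not be dense in $L^\infty(\Omega,m)$, so one instead notes that $U_1(t,\tau)$ is already constructed, that each $U(t,\tau)$ is self-adjoint on $L^2(\Omega,m)$ (the form $E(t,\cdot,\cdot)$ being symmetric), and defines $U_\infty(t,\tau):=(U_1(t,\tau))^*$ as the Banach-space adjoint acting on $L^\infty(\Omega,m)=(L^1(\Omega,m))'$; its norm is then $\|U_1(t,\tau)\|_{\Lm(L^1(\Omega,m))}\leq 1$, and consistency $U_\infty(t,\tau)u_0=U(t,\tau)u_0$ on $L^\infty\cap L^2$ follows by testing against $L^1\cap L^2$ functions and using self-adjointness on $L^2(\Omega,m)$.

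Next I would verify the consistency statement $U_p(t,\tau)u_0=U(t,\tau)u_0$ for $u_0\in L^p(\Omega,m)\cap L^2(\Omega,m)$: for finite $p$ this is immediate since $U_p(t,\tau)$ is by construction the continuous extension of the restriction of $U(t,\tau)$, so the two agree on the dense common domain and in particular on $u_0$ itself. For $p=\infty$ it follows from the adjoint identification described above. One should also record that these extensions are mutually consistent for different values of $p$ (if $u_0\in L^p\cap L^q\cap L^2$ then $U_p(t,\tau)u_0=U_q(t,\tau)u_0=U(t,\tau)u_0$), which again is automatic from the defining property; this is what justifies using the common symbol $U(t,\tau)$.

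For the strong continuity of $U_p(\cdot,\tau)$ from $(\tau,\infty)$ to $\Lm(L^p(\Omega,m))$, fix $\tau\geq 0$ and $u_0\in L^p(\Omega,m)$. Given $\varepsilon>0$, choose $v\in L^p(\Omega,m)\cap L^2(\Omega,m)$ with $\|u_0-v\|_{L^p(\Omega,m)}<\varepsilon$. For $t,t'>\tau$ one splits
\begin{equation}\notag
\|U_p(t,\tau)u_0-U_p(t',\tau)u_0\|_{L^p(\Omega,m)}\leq \|U_p(t,\tau)(u_0-v)\|_{L^p}+\|U_p(t,\tau)v-U_p(t',\tau)v\|_{L^p}+\|U_p(t',\tau)(v-u_0)\|_{L^p},
\end{equation}
and bounds the first and third terms by $\varepsilon$ using \eqref{Stima Contrazione U}. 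For the middle term, on $v\in L^p\cap L^2$ one has $U_p(t,\tau)v=U(t,\tau)v$, and the strong continuity of $U(\cdot,\tau)$ on $L^2(\Omega,m)$ (property 4) in Theorem \ref{theorem1}, which in particular gives $\Lm(L^2(\Omega,m))$-continuity of $t\mapsto U(t,\tau)$ on $(\tau,T]$) gives $U(t,\tau)v\to U(t',\tau)v$ in $L^2(\Omega,m)$; combined with the uniform $L^\infty$-bound coming from $L^\infty$-contractivity (when $p<\infty$, interpolating the $L^2$-convergence against the uniform $L^\infty$-bound yields $L^p$-convergence, e.g. $\|w\|_{L^p}\leq\|w\|_{L^2}^{\theta}\|w\|_{L^\infty}^{1-\theta}$ for suitable $\theta$ when $2\leq p<\infty$, and a symmetric argument pairing $L^2$ with $L^1$ for $1\leq p<2$), one gets $\|U(t,\tau)v-U(t',\tau)v\|_{L^p(\Omega,m)}\to 0$ as $t\to t'$. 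The case $p=\infty$ is the delicate one and I expect it to be the main obstacle: one does not get strong continuity on $L^\infty$ directly this way, so one argues by duality, using that $t\mapsto U_1(t,\tau)$ is strongly continuous on $L^1$ and that $U_\infty$ is its adjoint — but adjoints of strongly continuous families need not be strongly continuous, so one must instead invoke the analyticity and the smoothing estimate \eqref{estimate AU}, which for $t>\tau$ maps $L^2$ into $D(A(t))\subset H^s(\Omega)$ and, through the Sobolev embeddings of Theorems \ref{immsobpstar}--\ref{immsobpbar} and the ultracontractivity to be proved in Theorem \ref{ultracontr}, into better spaces, allowing the $L^\infty$-continuity to be deduced for $t$ bounded away from $\tau$. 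This is precisely why the statement restricts to $(\tau,\infty)$ rather than $[\tau,\infty)$.
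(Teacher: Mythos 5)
Your construction is essentially the paper's own argument: the paper deduces the theorem directly from the observations immediately preceding it (Markovianity gives positivity and $L^\infty$-contractivity, symmetry gives $L^1$-contractivity, Riesz--Thorin then gives $L^p$-contractivity for all $p\in[1,+\infty]$), with the density extension, the consistency of the extensions, and the strong continuity treated as the standard extrapolation facts that you spell out explicitly. Your additional discussion of strong continuity for $p=\infty$ goes beyond what the paper actually proves or later uses (it only needs the operators $U_p$, complete contractivity, and the duality relation from \cite{Daners}); note also that your appeal to Theorem \ref{ultracontr} is a forward reference, though not a circular one, since the ultracontractivity proof does not use $L^\infty$ strong continuity.
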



\bigskip

\noindent We now prove the ultracontractivity of the evolution family $U(t,\tau)$.

\begin{theo}\label{ultracontr} The evolution operator $U(t,\tau)$ is ultracontractive, i.e., for every $f\in L^1(\Omega,m)$ and $(t,\tau)\in\Xi$,
\begin{equation}\label{stima ultracontr}
\|U_1(t,\tau)f(\tau)\|_{L^\infty(\Omega,m)}\leq\left(\frac{\lambda\bar{C}}{2\beta}\right)^\frac{\lambda}{2} (t-\tau)^{-\frac{\lambda}{2}}\|f(\tau)\|_{L^1(\Omega,m)},
\end{equation}
where we recall that $\lambda=\frac{2d}{d-N+2s}$, $\bar{C}$ is the positive constant depending on $N$, $s$, $d$ and $\Omega$ appearing in \eqref{Nashineq} and $\beta>0$ is the coercivity constant of $E$.
\end{theo}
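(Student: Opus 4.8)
The plan is to derive the ultracontractivity estimate \eqref{stima ultracontr} from the fractional Nash inequality of Proposition \ref{Nash} by the classical Nash--Moser iteration argument, in the non-autonomous form used by Daners. First I would fix $\tau\in[0,T)$ and $0\le f=f(\tau)\in L^1(\Omega,m)$ (by linearity and positivity preservation it suffices to treat nonnegative data), and set $u(t):=U_1(t,\tau)f$ for $t>\tau$. From Theorem \ref{theorem3} we know $\|u(t)\|_{L^1(\Omega,m)}\le\|f\|_{L^1(\Omega,m)}$ for all $t>\tau$, and from Theorem \ref{theorem1} the function $u$ solves $u'(t)=A(t)u(t)$ in $L^2(\Omega,m)$ for $t>\tau$, with $u(t)\in D(A(t))\subset H^s(\Omega)$. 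The starting point of the iteration is the $L^2$ energy identity: for $t>\tau$,
\begin{equation*}
\frac{1}{2}\frac{\de}{\de t}\|u(t)\|_{L^2(\Omega,m)}^2=(A(t)u(t),u(t))_{L^2(\Omega,m)}=-E[t,u(t)]\le-\beta\|u(t)\|_{H^s(\Omega)}^2,
\end{equation*}
where the last step is the coercivity from Proposition \ref{coer}.

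Next I would insert the Nash inequality. Writing $\varphi(t):=\|u(t)\|_{L^2(\Omega,m)}^2$ and $L:=\|f\|_{L^1(\Omega,m)}$, Proposition \ref{Nash} gives $\|u(t)\|_{H^s(\Omega)}^2\ge\bar{C}^{-1}\varphi(t)^{1+2/\lambda}L^{-4/\lambda}$, so that
\begin{equation*}
\varphi'(t)\le-\frac{2\beta}{\bar{C}}L^{-4/\lambda}\varphi(t)^{1+\frac{2}{\lambda}},\qquad t>\tau.
\end{equation*}
This is a scalar Bernoulli-type differential inequality; dividing by $\varphi^{1+2/\lambda}$ and integrating from $\tau$ to $t$ (the endpoint value at $\tau$ only helps, since the corresponding term has a favourable sign) yields $\varphi(t)^{-2/\lambda}\ge\frac{4\beta}{\lambda\bar{C}}L^{-4/\lambda}(t-\tau)$, i.e.
\begin{equation*}
\|u(t)\|_{L^2(\Omega,m)}\le\left(\frac{\lambda\bar{C}}{4\beta}\right)^{\frac{\lambda}{4}}(t-\tau)^{-\frac{\lambda}{4}}\|f\|_{L^1(\Omega,m)}.
\end{equation*}
This is the $L^1\to L^2$ smoothing estimate with the correct exponent $\lambda/4$.

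To upgrade to $L^1\to L^\infty$, I would exploit the two-parameter (evolution) structure together with duality and the semigroup law $U(t,\tau)=U(t,\sigma)U(\sigma,\tau)$ from Theorem \ref{theorem1}. Since each $E(t,\cdot,\cdot)$ is symmetric, the adjoint of $U(t,\tau)$ is the backward evolution family, which satisfies the same $L^1\to L^2$ bound; hence by duality one also gets an $L^2\to L^\infty$ estimate $\|U(t,\sigma)g\|_{L^\infty(\Omega,m)}\le\left(\frac{\lambda\bar C}{4\beta}\right)^{\lambda/4}(t-\sigma)^{-\lambda/4}\|g\|_{L^2(\Omega,m)}$. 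Composing at the midpoint $\sigma=\frac{t+\tau}{2}$, so that $t-\sigma=\sigma-\tau=\frac{t-\tau}{2}$, gives
\begin{equation*}
\|U_1(t,\tau)f\|_{L^\infty(\Omega,m)}\le\left(\frac{\lambda\bar C}{4\beta}\right)^{\frac{\lambda}{2}}\left(\frac{t-\tau}{2}\right)^{-\frac{\lambda}{2}}\|f\|_{L^1(\Omega,m)}=\left(\frac{\lambda\bar C}{2\beta}\right)^{\frac{\lambda}{2}}(t-\tau)^{-\frac{\lambda}{2}}\|f\|_{L^1(\Omega,m)},
\end{equation*}
which is exactly \eqref{stima ultracontr}. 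I expect the main technical obstacle to be the rigorous justification of the differentiability and of the energy identity $\frac{1}{2}\frac{\de}{\de t}\|u(t)\|^2=-E[t,u(t)]$ up to using it together with the $L^1$ bound — one must know $u(t)$ lies in $L^1(\Omega,m)$ with controlled norm for $t$ down to $\tau$ (supplied by Theorem \ref{theorem3}) and that $t\mapsto\|u(t)\|_{L^2(\Omega,m)}^2$ is absolutely continuous on $(\tau,T)$ with the stated derivative, which follows from Theorem \ref{theorem1}(4) and $D(A(t))\subset H^s(\Omega)$; the duality/midpoint step is then routine given the symmetry of the forms.
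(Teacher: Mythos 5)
Your proposal is correct and follows essentially the same route as the paper's proof: the energy identity combined with coercivity and the Nash inequality of Proposition \ref{Nash}, together with $L^1$-contractivity, yields the $L^1\to L^2$ bound with exponent $\lambda/4$; then symmetry of $E(t,\cdot,\cdot)$ gives the dual $L^2\to L^\infty$ bound, and the midpoint composition produces exactly the constant $\left(\frac{\lambda\bar{C}}{2\beta}\right)^{\lambda/2}(t-\tau)^{-\lambda/2}$. The only cosmetic difference is that the paper runs the differential inequality for $f$ in the dense class $H^s(\Omega)$ (so that the $L^2$-valued equation from Theorem \ref{theorem1} applies directly) and then passes to operator norms, whereas you start from $f\in L^1(\Omega,m)$ and defer this approximation point to a remark, which is a routine density adjustment.
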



\begin{proof}
We adapt to our setting the proof of \cite[Theorem 4.3]{mugnolo}, see also \cite[Proposition 3.8]{arendtelst}.

Let $f\in H^s(\Omega)$ and let $\tau\in [0,T)$ be fixed. From \cite[Proposition III.1.2]{showalter} it holds that
\begin{equation}\notag
\frac{\de}{\de t}\|F(\cdot)\|_{L^2(\Omega,m)}^2=2\left(\frac{\de F(\cdot)}{\de t},F(\cdot)\right)_{L^2(\Omega,m)}\quad\text{for every }F\in H^s(\Omega).
\end{equation}

\noindent We remark that, if $f\in H^s(\Omega)$, then $f\in L^1(\Omega,m)$. Hence, for every $f\in H^s(\Omega)$ and a.e. $(t,\tau)\in\Xi$, from Theorem \ref{theorem1}, \eqref{corr} and the coercivity of $E[t,u]$ we have that
\begin{equation}\notag
\begin{split}
\frac{\partial}{\partial t}\|U(t,\tau)f\|_{L^2(\Omega,m)}^2&=2\left(\frac{\partial U(t,\tau)f}{\partial t},U(t,\tau)f\right)_{L^2(\Omega,m)}=2\left(A(t)U(t,\tau)f,U(t,\tau)f\right)_{L^2(\Omega,m)}\\[2mm]
&=-2E[t,U(t,\tau)f]\leq -2\beta\|U(t,\tau)f\|^2_{H^s(\Omega)},
\end{split}
\end{equation}
where $\beta$ is the (positive) coercivity constant of $E$. Then, from Nash inequality \eqref{Nashineq}, recalling that $\lambda=\frac{2d}{d-N+2s}$, it follows that
\begin{equation}\label{ultracont1}
\frac{\partial}{\partial t}\|U(t,\tau)f\|_{L^2(\Omega,m)}^2\leq -\frac{2\beta}{\bar{C}}\|U(t,\tau)f\|_{L^2(\Omega,m)}^{2+\frac{4}{\lambda}}\|U(t,\tau)f\|_{L^1(\Omega,m)}^{-\frac{4}{\lambda}},
\end{equation}
where $\bar{C}$ is the positive constant in \eqref{Nashineq} depending on $N$, $s$, $d$ and $\Omega$. Therefore, since $U(t,\tau)$ is completely contractive, from \eqref{ultracont1} we have
\begin{equation}\label{ultracont2}
\begin{split}
&\frac{\partial}{\partial t}\left(\|U(t,\tau)f\|_{L^2(\Omega,m)}^2\right)^{-\frac{2}{\lambda}}=-\frac{2}{\lambda}\|U(t,\tau)f\|_{L^2(\Omega,m)}^{-2-\frac{4}{\lambda}}\frac{\partial}{\partial t}\|U(t,\tau)f\|_{L^2(\Omega,m)}^2\\[2mm]
&\geq\frac{4\beta}{\lambda\bar{C}}\|U(t,\tau)f\|_{L^1(\Omega,m)}^{-\frac{4}{\lambda}}\geq\frac{4\beta}{\lambda\bar{C}}\|f\|_{L^1(\Omega,m)}^{-\frac{4}{\lambda}}.
\end{split}
\end{equation}
Then, integrating \eqref{ultracont2} between $\tau$ and $t$, we get 
\begin{equation}\notag
\|U(t,\tau)f\|_{L^2(\Omega,m)}^{-\frac{4}{\lambda}}\geq\frac{4\beta}{\lambda\bar{C}}\|f\|_{L^1(\Omega,m)}^{-\frac{4}{\lambda}}(t-\tau),
\end{equation}
which in turn implies that
\begin{equation}\label{ultracont L1-L2}
\|U(t,\tau)\|_{\Lm(L^1(\Omega,m)\to L^2(\Omega,m))}\leq\left(\frac{\lambda\bar{C}}{4\beta}\right)^\frac{\lambda}{4}(t-\tau)^{-\frac{\lambda}{4}}.
\end{equation}
In order to complete the proof, we need to prove an analogous bound by considering $U(t,\tau)$ as an operator from $L^2(\Omega,m)$ to $L^\infty(\Omega,m)$. We point out that, since $E(t,v,u)=E(t,u,v)$ for every $t\in [0,T]$ and $u,v\in H^s(\Omega)$, the evolution operators associated with the two forms coincide with $U(t,\tau)$. Then, from (2.22) in \cite{Daners}, we have that for $p\geq 1$ the adjoint operator $(U_p(t,\tau))^\prime$ is equal to $U_{p^\prime}(T-\tau,T-t)$ for every $0\leq\tau\leq t\leq T$.


\noindent We now compute
\begin{equation}\label{ultracont L2-Linf}
\begin{split}
&\|U_2(t,\tau)\|_{\Lm(L^2(\Omega,m)\to L^\infty(\Omega,m))}=\|(U_2(T-\tau,T-t))^\prime\|_{\Lm(L^2(\Omega,m)\to L^\infty(\Omega,m))}\\
&=\|U_2(T-\tau,T-t)\|_{\Lm(L^1(\Omega,m)\to L^2(\Omega,m))}=\|U(T-\tau,T-t)\|_{\Lm(L^1(\Omega,m)\to L^2(\Omega,m))}\\
&\leq\left(\frac{\lambda\bar{C}}{4\beta}\right)^\frac{\lambda}{4}(t-\tau)^{-\frac{\lambda}{4}},
\end{split}
\end{equation}
where the last inequality follows from \eqref{ultracont L1-L2}.

Now, from 2) in Theorem \ref{theorem1}, combining \eqref{ultracont L1-L2} and \eqref{ultracont L2-Linf}, it finally holds that
\begin{equation}\label{ultracont L1-Linf}
\begin{split}
&\|U_1(t,\tau)\|_{\Lm(L^1(\Omega,m)\to L^\infty(\Omega,m))}=\left\|U_1\left(t,\frac{t+\tau}{2}\right)U_1\left(\frac{t+\tau}{2},\tau\right)\right\|_{\Lm(L^1(\Omega,m)\to L^\infty(\Omega,m))}\\[2mm]
&\leq\left\|U\left(t,\frac{t+\tau}{2}\right)\right\|_{\Lm(L^2(\Omega,m)\to L^\infty(\Omega,m))}\left\|U_1\left(\frac{t+\tau}{2},\tau\right)\right\|_{\Lm(L^1(\Omega,m)\to L^2(\Omega,m))}\\[2mm]
&\leq\left(\frac{\lambda\bar{C}}{2\beta}\right)^\frac{\lambda}{2}(t-\tau)^{-\frac{\lambda}{2}}.
\end{split}
\end{equation}

\end{proof}


\bigskip

\begin{theo}\label{theorem2}
Under the hypotheses of Theorem \ref{theorem1}, the evolution operator $U(t,\tau)$ associated with the family $A(t)$ satisfies the following properties,
\begin{enumerate}
  \item[1)] for every $\theta$ such that $0\leq\theta<\eta+\frac{1}{2}$ and $0\leq\tau<t\leq T$,
	$$\mathcal{R}(U(t,\tau))\subset D(A(t)^\theta);$$
  \item[2)] for $0\leq \tau<t\leq T$, $$\left\|A(t)^\theta U(t,\tau)\right\|_{\Lm(L^{2}(\Omega,m))}\leq C_\theta (t-\tau)^{-\theta};$$
  \item[3)] for $0<\xi<\gamma<\eta+\frac{1}{2}$,
	$$\left\|A(t)^{\gamma} U(t,\tau) A(\tau)^{-\xi}\right\|_{\Lm(L^{2}(\Omega,m))} \leq C_\gamma (t-\tau)^{\xi-\gamma};$$
	\item[4)] for $\tau>0$ and $0<\xi<1$, $$\left\|[U(t+\tau,t)-U(t,t)] A(t)^{-\xi}\right\|_{\Lm(L^{2}(\Omega,m))}\leq C\tau^\xi.$$
\end{enumerate}
\end{theo}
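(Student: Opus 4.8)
The plan is to derive all four estimates from the basic properties of the evolution family collected in Theorem \ref{theorem1} — in particular the fundamental bound $\|A(t)U(t,\tau)\|_{\Lm(L^2(\Omega,m))}\le C(t-\tau)^{-1}$ from \eqref{estimate AU}, the contractivity \eqref{estimatesU}, and the semigroup law 2) — together with the H\"older continuity of $A(t)^{-1}$ in the sense of Yagi, i.e. \eqref{HolderA(t)-1}, and the identification of fractional power domains $D(A(t)^{1/2})=H^s(\Omega)$ with uniform equivalence of norms from Proposition \ref{propertiesAstructutural}. This is exactly the abstract situation treated in \cite[Section 3.6 and Chapter 3]{Yagi}, so the strategy is to transcribe those arguments into our notation.

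\textbf{Step 1 (claims 1) and 2)).} For $\theta\in[0,1]$ one has $A(t)^\theta U(t,\tau)=\big(A(t)U(t,\tau)\big)^\theta\cdot$(correction terms), but cleanly: write $A(t)^\theta U(t,\tau)=A(t)^\theta U\!\left(t,\tfrac{t+\tau}{2}\right)U\!\left(\tfrac{t+\tau}{2},\tau\right)$ and use moment-type inequalities for fractional powers together with \eqref{estimate AU} to bound $\big\|A(t)^\theta U(t,\tfrac{t+\tau}{2})\big\|\le C_\theta(t-\tau)^{-\theta}$; for $\theta\in(1,\eta+\tfrac12)$ one iterates, splitting $(\tau,t)$ into finitely many subintervals and using $A(t)^\theta=A(t)^{\theta-1}A(t)$ on each piece, controlling the commutator-type errors $A(t)^{\theta-1}-A(\sigma)^{\theta-1}$ via \eqref{HolderA(t)-1}. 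The range inclusion $\mathcal R(U(t,\tau))\subset D(A(t)^\theta)$ follows at once from the resulting finiteness of $\|A(t)^\theta U(t,\tau)\|$. The threshold $\eta+\tfrac12$ is precisely what the H\"older exponent $\eta$ in \eqref{HolderA(t)-1} allows when one balances the singularity $(t-\tau)^{-1}$ coming from differentiating $U$ against the gain $|t-\sigma|^\eta$ from the H\"older continuity of $A(\cdot)^{-1}$.

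\textbf{Step 2 (claim 3)).} Here one writes $A(t)^\gamma U(t,\tau)A(\tau)^{-\xi} = A(t)^\gamma U(t,\tau)\big(A(\tau)^{-\xi}-A(t)^{-\xi}\big) + A(t)^{\gamma-\xi}U(t,\tau) + A(t)^\gamma U(t,\tau)\big(A(t)^{-\xi}-\text{(lower order)}\big)$; more efficiently, use $A(\tau)^{-\xi}=A(t)^{-\xi}+\big(A(\tau)^{-\xi}-A(t)^{-\xi}\big)$, estimate the first resulting term by $\|A(t)^{\gamma-\xi}U(t,\tau)\|\le C(t-\tau)^{\xi-\gamma}$ from claim 2), and for the second term combine $\|A(t)^\gamma U(t,\tau)\|\le C(t-\tau)^{-\gamma}$ with the H\"older estimate $\|A(\tau)^{-\xi}-A(t)^{-\xi}\|_{\Lm(L^2(\Omega,m))}\le C(t-\tau)^{\xi\eta}$ or the sharper $\le C(t-\tau)^{\min\{\xi,\eta\}}$ deduced from \eqref{HolderA(t)-1} by interpolation; the hypothesis $\xi<\gamma<\eta+\tfrac12$ guarantees the exponents add up to something $\ge \xi-\gamma$.

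\textbf{Step 3 (claim 4)).} For the time-increment estimate, use the fundamental theorem of calculus in the form $U(t+\tau,t)-U(t,t)=\int_t^{t+\tau}\frac{\partial}{\partial\sigma}U(\sigma,t)\,\de\sigma=\int_t^{t+\tau}A(\sigma)U(\sigma,t)\,\de\sigma$ from 4) of Theorem \ref{theorem1}, hence $[U(t+\tau,t)-U(t,t)]A(t)^{-\xi}=\int_t^{t+\tau}A(\sigma)U(\sigma,t)A(t)^{-\xi}\,\de\sigma$, and apply claim 3) with $\gamma=1$ (legitimate since $1<\eta+\tfrac12$) to get the integrand bounded by $C(\sigma-t)^{\xi-1}$; integrating $\int_t^{t+\tau}(\sigma-t)^{\xi-1}\,\de\sigma=\tau^\xi/\xi$ yields the claim. \textbf{The main obstacle} I anticipate is not any single estimate but handling claims 1)–2) for $\theta>1$ cleanly: the naive decomposition produces terms involving $A(t)^{\theta-1}$ acting on $U$ over a full subinterval, and one must carefully track how the Yagi-type H\"older continuity \eqref{HolderA(t)-1} propagates to fractional powers $A(t)^{\theta-1}-A(\sigma)^{\theta-1}$ (via a Dunford integral / resolvent representation) so that the exponent bookkeeping closes exactly at the threshold $\eta+\tfrac12$ rather than at $1$ or at $\eta$. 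All of this is carried out in \cite[Chapter 3]{Yagi} in the abstract, so the write-up amounts to checking that our hypotheses \textrm{1)–3)} of Proposition \ref{propertiesAstructutural} match the hypotheses there and citing the corresponding lemmas.
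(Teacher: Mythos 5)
Your overall strategy coincides with the paper's: for properties 1)--3) the paper gives no independent argument at all, simply citing Section 8.1 of \cite{Yagi} (exactly the deferral you make at the end of your Step 1), and for property 4) the paper uses the same device as your Step 3, namely writing the increment as $\int A(\sigma)U(\sigma,t)A(t)^{-\xi}\,\de\sigma$ and bounding the integrand by $C(\sigma-t)^{\xi-1}$ via property 3) with $\gamma=1$, then integrating. Two caveats. First, in Step 3 the paper integrates over $[t+\epsilon,t+\tau]$ and only then lets $\epsilon\to 0^+$ using the strong continuity of $U$, because differentiability of $\sigma\mapsto U(\sigma,t)$ is guaranteed only for $\sigma>t$; your direct application of the fundamental theorem of calculus starting at $\sigma=t$ needs this (harmless but necessary) truncation, the singularity $(\sigma-t)^{\xi-1}$ being integrable.

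Second, and more importantly, the ``more efficient'' derivation you sketch for claim 3) in Step 2 does not work and should not be presented as an alternative proof. Writing $A(\tau)^{-\xi}=A(t)^{-\xi}+(A(\tau)^{-\xi}-A(t)^{-\xi})$, you estimate the first contribution by $\|A(t)^{\gamma-\xi}U(t,\tau)\|$, which tacitly commutes $A(t)^{-\xi}$ past $U(t,\tau)$; in the non-autonomous setting $U(t,\tau)$ does not commute with $A(t)^{-\xi}$, so $A(t)^{\gamma}U(t,\tau)A(t)^{-\xi}\neq A(t)^{\gamma-\xi}U(t,\tau)$. Moreover the exponent bookkeeping for the second contribution fails: any H\"older bound of the type $\|A(\tau)^{-\xi}-A(t)^{-\xi}\|\leq C|t-\tau|^{\delta}$ with $\delta=\xi\eta$ or $\delta=\min\{\xi,\eta\}$ gives $(t-\tau)^{-\gamma+\delta}$, and since $\eta<1$ one has $\delta<\xi$ whenever $\xi>\eta$, so this is strictly worse than the claimed $(t-\tau)^{\xi-\gamma}$ as $t-\tau\to0$. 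The genuine proof of 3) (as in \cite{Yagi}) goes through the construction of the evolution family itself (parametrix/integral-equation representation), not through such a splitting; since both you and the paper ultimately rest 1)--3) on that citation, the flaw does not sink the proposal, but the sketch as written would not survive as a standalone argument.
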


\begin{proof} For the proof of properties 1) to 3) we refer to Section 8.1 in \cite{Yagi}. We prove 4). From Theorem \ref{theorem1}, for every $\epsilon>0$ we have
\begin{equation}\notag
\begin{split}
&\left\|[U(t+\tau,t)-U(t+\epsilon,t)] A(t)^{-\xi}\right\|_{\Lm(L^{2}(\Omega,m))}=\left\|\,\,\int_{t+\epsilon}^{t+\tau} \frac{\partial U(\sigma,t)}{\partial\sigma} A(t)^{-\xi}\,\de\sigma\right\|_{\Lm(L^{2}(\Omega,m))}\\[2mm]
&=\left\|\,\,\int_{t+\epsilon}^{t+\tau} A(\sigma)U(\sigma,t)A(t)^{-\xi}\,\de\sigma\right\|_{\Lm(L^{2}(\Omega,m))}
  \leq\int_{t+\epsilon}^{t+\tau}\left\|A(\sigma)U(\sigma,t)A(t)^{-\xi}\right\|_{\Lm(L^{2}(\Omega,m))}\,\de\sigma\\[2mm]
	&\leq C\int_{t+\epsilon}^{t+\tau}|\sigma-t|^{\xi-1}\,\de\sigma=\frac{C}{\xi}(\tau^\xi-\epsilon^\xi),
\end{split}
\end{equation}
where the last inequality follows by property 3). The conclusion then follows by passing to the limit as $\epsilon\to 0^+$ and taking into account that $U(t,\tau)$ is strongly continuous.
\end{proof}

\begin{remark}
The above properties still hold for the family of evolution operators extended to $L^p(\Omega,m)$.
\end{remark}

\section{The semilinear problem}\label{sec5}
\setcounter{equation}{0}

We recall the properties of the abstract inhomogeneous Cauchy problem
\begin{equation}\label{nonhomogeneous cauchyproblem for U}
\begin{cases}
\frac{\partial u(t)}{\partial t}=A(t)u(t)+f(t)\quad\text{for $t\in(0,T]$},\\
u(0)=\phi,
\end{cases}
\end{equation}
where $A(t)$ satisfies Theorem \ref{propertiesAstructutural}, $\phi\in L^2(\Omega,m)$ and  $f\in C^{0,\vartheta}([0,T],L^2(\Omega,m))$.

\begin{theo}\label{TheoremesistenzayagiF}
For every $\phi\in L^2(\Omega,m)$ and $f\in C^{0,\vartheta}([0,T], L^2(\Omega,m))$ there exists a unique $u(t)\in C([0,T]; L^2(\Omega,m))\cap C^1((0,T]; L^2(\Omega,m))$, with $A(t)u\in C((0,T]; L^2(\Omega,m))$, which satisfies \eqref{nonhomogeneous cauchyproblem for U}. Moreover, for $0<t\leq T$, one has
$$\|u(t)\|_{L^2(\Omega,m)}+t\left\|\frac{\partial u(t)}{\partial t}\right\|_{L^2(\Omega,m)} + t \|A(t)u(t)\|_{L^2(\Omega,m)}\leq C(\|\phi\|_{L^2(\Omega,m)}+\|f\|_{C^{0,\vartheta}([0,T],L^2(\Omega,m))}),$$
where $C$ is a positive constant depending on the constant in \eqref{estimate AU}.
Finally,  $$u(t)=U(t,0)\phi+ \int_{0}^{t}U(t,\sigma)f(\sigma)\,\de\sigma.$$
\end{theo}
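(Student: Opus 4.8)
\noindent The plan is to prove that the variation-of-constants formula $u(t)=U(t,0)\phi+\int_{0}^{t}U(t,\sigma)f(\sigma)\,\de\sigma$, with $U(t,\tau)$ the evolution family of Theorem~\ref{theorem1}, defines a function enjoying the asserted regularity, that it solves \eqref{nonhomogeneous cauchyproblem for U}, and that it is the only such function; this is the classical Tanabe--Sobolevskii--Yagi result for non-autonomous parabolic problems (Theorem~3.9 in \cite{Yagi}), and all its structural hypotheses are already available here: uniform sectoriality, the square-root property $D(A(t)^{1/2})=H^s(\Omega)$, and the Yagi-type H\"older continuity of $A(t)^{-1}$ are contained in Proposition~\ref{propertiesAstructutural}, while the evolution family together with the bounds $\|U(t,\tau)\|_{\Lm(L^2(\Omega,m))}\le1$, $\|A(t)U(t,\tau)\|_{\Lm(L^2(\Omega,m))}\le C(t-\tau)^{-1}$ from \eqref{estimate AU}, and the refinements of Theorem~\ref{theorem2}, are in hand. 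I would write $u=v_1+v_2$ with $v_1(t):=U(t,0)\phi$ and $v_2(t):=\int_0^t U(t,\sigma)f(\sigma)\,\de\sigma$ and analyse the two pieces separately.

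\noindent The homogeneous part $v_1$ needs nothing new: by item~3) of Theorem~\ref{theorem1} one has $\|v_1(t)\|_{L^2(\Omega,m)}\le\|\phi\|_{L^2(\Omega,m)}$ and strong continuity on $[0,T]$ with $v_1(0)=\phi$, and by items~4)--5) one has $v_1\in C^1((0,T];L^2(\Omega,m))$ with $v_1'(t)=A(t)U(t,0)\phi$, $A(t)v_1\in C((0,T];L^2(\Omega,m))$ and $t\,\|A(t)U(t,0)\phi\|_{L^2(\Omega,m)}\le C\|\phi\|_{L^2(\Omega,m)}$ by \eqref{estimate AU}. Hence $v_1$ already solves the homogeneous version of \eqref{nonhomogeneous cauchyproblem for U} and accounts for the $\|\phi\|$-part of the a priori bound.

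\noindent The inhomogeneous part $v_2$ carries the real work, and it is the expected main obstacle. It is a well-defined Bochner integral since $\sigma\mapsto U(t,\sigma)f(\sigma)$ is continuous on $[0,t]$ with $\|U(t,\sigma)f(\sigma)\|_{L^2(\Omega,m)}\le\|f\|_\infty:=\sup_{[0,T]}\|f(\cdot)\|_{L^2(\Omega,m)}$, so $v_2\in C([0,T];L^2(\Omega,m))$, $v_2(0)=0$ and $\|v_2(t)\|_{L^2(\Omega,m)}\le t\|f\|_\infty$. To show $v_2\in C^1((0,T];L^2(\Omega,m))$, $A(t)v_2\in C((0,T];L^2(\Omega,m))$ and $v_2'(t)=A(t)v_2(t)+f(t)$, I would split $f(\sigma)=\bigl(f(\sigma)-f(t)\bigr)+f(t)$ inside the integral. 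For the first summand the regularizing estimate $\|A(t)U(t,\sigma)\|_{\Lm(L^2(\Omega,m))}\le C(t-\sigma)^{-1}$ together with the H\"older bound $\|f(\sigma)-f(t)\|_{L^2(\Omega,m)}\le[f]_{\vartheta}\,|t-\sigma|^{\vartheta}$ (with $[f]_\vartheta$ the H\"older seminorm of $f$) produces the integrable singularity $C(t-\sigma)^{\vartheta-1}$, so $A(t)\int_0^t U(t,\sigma)\bigl(f(\sigma)-f(t)\bigr)\,\de\sigma$ is absolutely convergent, depends continuously on $t$, and is bounded in norm by $C\,[f]_{\vartheta}\,t^{\vartheta}$. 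The second summand $\int_0^t A(t)U(t,\sigma)f(t)\,\de\sigma$ is the delicate one, since $\partial_\sigma U(t,\sigma)=-U(t,\sigma)A(\sigma)$ rather than $-A(t)U(t,\sigma)$, so it does not telescope directly; here I would follow \cite{Yagi} and replace $U(t,\sigma)$ by the frozen semigroup $e^{(t-\sigma)A(t)}$, for which $\int_0^t A(t)e^{(t-\sigma)A(t)}g\,\de\sigma=\bigl(e^{tA(t)}-\mathrm{Id}\bigr)g$, controlling the remainder $U(t,\sigma)-e^{(t-\sigma)A(t)}$ through the H\"older continuity of $A(t)^{-1}$ from item~3) of Proposition~\ref{propertiesAstructutural} and the estimates of Theorem~\ref{theorem2} --- again only integrable singularities in $\sigma$ appear. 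Differentiating under the integral sign, justified by dominated convergence, yields $v_2'(t)=f(t)+\int_0^t A(t)U(t,\sigma)f(\sigma)\,\de\sigma=A(t)v_2(t)+f(t)$ and the bounds $\|A(t)v_2(t)\|_{L^2(\Omega,m)}\le C\|f\|_{C^{0,\vartheta}([0,T],L^2(\Omega,m))}$, hence $t\,\|v_2'(t)\|_{L^2(\Omega,m)}\le C\|f\|_{C^{0,\vartheta}([0,T],L^2(\Omega,m))}$.

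\noindent Setting $u:=v_1+v_2$ and adding the two sets of estimates gives a function in $C([0,T];L^2(\Omega,m))\cap C^1((0,T];L^2(\Omega,m))$ with $A(t)u\in C((0,T];L^2(\Omega,m))$ that solves \eqref{nonhomogeneous cauchyproblem for U} and satisfies the claimed inequality. Uniqueness is immediate: if $w$ is the difference of two solutions, then $w(0)=0$, $w\in C^1((0,T];L^2(\Omega,m))$ and $w(t)\in D(A(t))\subset H^s(\Omega)$ for $t>0$, so by \cite[Proposition III.1.2]{showalter} and \eqref{corr} one has $\frac{\de}{\de t}\|w(t)\|^2_{L^2(\Omega,m)}=2\bigl(A(t)w(t),w(t)\bigr)_{L^2(\Omega,m)}=-2E[t,w(t)]\le0$; thus $t\mapsto\|w(t)\|^2_{L^2(\Omega,m)}$ is non-increasing on $(0,T]$, continuous on $[0,T]$ and vanishes at $t=0$, whence $w\equiv0$. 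The main difficulty is thus entirely concentrated in $v_2$, and specifically in substituting the frozen semigroup $e^{(t-\sigma)A(t)}$ for $U(t,\sigma)$ while keeping quantitative control of the difference; everything else is read off from Theorems~\ref{theorem1} and~\ref{theorem2}.
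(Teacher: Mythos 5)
Your proposal is correct in outline, but note how it relates to the paper: the paper offers no proof of its own for this statement --- it simply invokes Theorem 3.9 of \cite{Yagi}, whose structural hypotheses are supplied by Proposition \ref{propertiesAstructutural} and Theorem \ref{theorem1}. What you have written is essentially a reconstruction of the proof of that cited theorem: the splitting $u=U(t,0)\phi+\int_0^t U(t,\sigma)f(\sigma)\,\de\sigma$, the decomposition $f(\sigma)=(f(\sigma)-f(t))+f(t)$, the smoothing bound \eqref{estimate AU} giving the integrable singularity $(t-\sigma)^{\vartheta-1}$, and the comparison of $U(t,\sigma)$ with the frozen semigroup $e^{(t-\sigma)A(t)}$ controlled through the H\"older condition \eqref{HolderA(t)-1} are precisely the ingredients of the Tanabe--Sobolevskii--Yagi construction. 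The one step you leave genuinely schematic is the remainder estimate for $A(t)\bigl[U(t,\sigma)-e^{(t-\sigma)A(t)}\bigr]$, where the exponent $\eta\in(\tfrac12,1]$ of \eqref{HolderA(t)-1} and the estimates of Theorem \ref{theorem2} must be used quantitatively to produce a bound of order $(t-\sigma)^{\eta-1}$; in the paper's economy this is exactly the part delegated to \cite{Yagi}, so as a blind sketch it is acceptable, but it would need to be written out for a self-contained proof. On the other hand, your uniqueness argument is a genuinely different and more elementary route than the reference: using symmetry of the form, \eqref{corr} and \cite[Proposition III.1.2]{showalter} to get $\frac{\de}{\de t}\|w(t)\|^2_{L^2(\Omega,m)}=-2E[t,w(t)]\le 0$ is self-contained and mirrors the energy technique the paper itself employs in Theorem \ref{ultracontr}. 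Finally, your appeal to $\partial_\sigma U(t,\sigma)=-U(t,\sigma)A(\sigma)$ serves only as motivation and nothing in the argument rests on it, which is just as well since that identity requires its own justification in the non-autonomous setting.
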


\noindent For the proof, see Theorem 3.9 in \cite{Yagi}.

\subsection{Local existence}

We now consider the abstract semilinear Cauchy problem
\begin{equation}\label{eq:5.1}
(P)\begin{cases}
\frac{\partial u(t)}{\partial t}=A(t)u(t)+J(u(t))\quad\text{for $t\in[0,T]$},\\
u(0)=\phi,
\end{cases}
\end{equation}
where $A(t)\colon D(A(t))\subset L^2(\Omega,m)\to L^2(\Omega,m)$ is the family of operators associated to the energy form $E[t,u]$ introduced in \eqref{frattale} and $\phi$ is a given function in $L^2(\Omega,m)$. We assume that for every $t\in [0,T]$ $J$ is a mapping from $L^{2p}(\Omega,m)$ to $L^2(\Omega,m)$ for $p>1$ which is locally Lipschitz, i.e., it is Lipschitz on bounded sets in $L^{2p}(\Omega,m)$,
\begin{equation}\label{LIPJ}
\|J(u)-J(v)\|_{L^2(\Omega,m)}\leq\sl{l(r)}\|u-v\|_{L^{2p}(\Omega,m)}
\end{equation}
whenever $\|u\|_{L^{2p}(\Omega,m)}\leq r,\|v\|_{L^{2p}(\Omega,m)}\leq r$, where $\sl{l(r)}$ denotes the Lipschitz constant of $J$.
We also assume that $J(0)=0$. This assumption is not necessary in all that follows, but it simplifies the calculations (see \cite{weiss1}).

In order to prove the local existence theorem, we make the following assumption on the growth of $\sl{l(r)}$ when $r\to+\infty$,
\begin{equation}\label{crescita l}
\mbox{Let}\; a:=\frac{\lambda}{4}\left(1-\frac{1}{p}\right); \quad \mbox{there exists}\,\,0<b<a\,:\,\sl{l(r)}=
{\mathcal{O}}(r^\frac{1-a}{b}),\,r\to+\infty,
\end{equation}
where $\lambda$ is defined in Proposition \ref{Nash}. We note that $0<a<1$ for $N-2s\leq\frac{d}{2}$ and $p>1$. 

Let $p>1$. Following the approach in Theorem 2 in \cite{weiss1} and adapting the proof of Theorem 5.1 in \cite{La-Ve3}, we have the following result.

\begin{theorem}\label{theoesloc}
Let  condition \eqref{crescita l} hold. Let $\kappa>0$ be sufficiently small, $\phi\in L^2(\Omega,m)$ and
\begin{equation}\label{cnidato}
\limsup_{t\to 0^+}\|t^b U(t,0)\phi\|_{L^{2p}(\Omega,m)}<\kappa.
\end{equation}
Then there exists a $\overline{T}>0$ and a unique mild solution
\begin{equation}\notag
u\in C([0,\overline{T}],L^2(\Omega,m))\cap C((0,\overline{T}],L^{2p}(\Omega,m)), 
\end{equation}
with $u(0)=\phi$ and $\|t^b u(t)\|_{L^{2p}(\Omega,m)}<2\kappa$, satisfying, for every $t\in [0,\overline{T}]$,
\begin{equation}\label{rappint1}
u(t)= U(t,0) \phi +\int_0^t U(t,\tau) J(u(\tau))\,\de\tau,
\end{equation}
with the integral being both an $L^2$-valued and an $L^{2p}$-valued Bochner integral.
\end{theorem}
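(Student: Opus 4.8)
The plan is to rewrite \eqref{rappint1} as a fixed point equation $u=\Phi(u)$ with
$$\Phi(u)(t):=U(t,0)\phi+\int_0^t U(t,\tau)J(u(\tau))\,\de\tau,$$
and to apply the Banach contraction principle in a suitable complete metric space; the whole difficulty is concentrated in the smoothing estimates for the evolution family that make the Duhamel term a strict contraction for small data on a short time interval.

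First I would promote the ultracontractivity of Theorem \ref{ultracontr} to an $L^2$--$L^{2p}$ smoothing bound. By \eqref{Stima Contrazione U} we have $\|U(t,\tau)\|_{\Lm(L^2(\Omega,m))}\leq 1$, and \eqref{ultracont L2-Linf} gives $\|U(t,\tau)\|_{\Lm(L^2(\Omega,m)\to L^\infty(\Omega,m))}\leq C(t-\tau)^{-\lambda/4}$; interpolating these two bounds by the Riesz--Thorin theorem with $\theta=1-\frac1p$ yields, for every $(t,\tau)\in\Xi$,
$$\|U(t,\tau)\|_{\Lm(L^2(\Omega,m)\to L^{2p}(\Omega,m))}\leq C\,(t-\tau)^{-a},\qquad a=\tfrac{\lambda}{4}\Bigl(1-\tfrac1p\Bigr)\in(0,1).$$
This, together with the $L^2$-contractivity, is precisely what is needed to estimate $\Phi$.

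Next, with $\overline{T}>0$ small (to be fixed at the end), I would set
$$d(u,v):=\sup_{0<t\leq\overline{T}}\|t^b(u(t)-v(t))\|_{L^{2p}(\Omega,m)}+\sup_{0\leq t\leq\overline{T}}\|u(t)-v(t)\|_{L^2(\Omega,m)},$$
and take $\mathcal{Y}$ to be the set of $u\in C([0,\overline{T}],L^2(\Omega,m))\cap C((0,\overline{T}],L^{2p}(\Omega,m))$ with $u(0)=\phi$ and $\sup_{0<t\leq\overline{T}}\|t^b u(t)\|_{L^{2p}(\Omega,m)}\leq 2\kappa$; this is a closed subset of a Banach space, hence complete for $d$, and it is nonempty (it contains $t\mapsto U(t,0)\phi$ by \eqref{cnidato}). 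On $\mathcal{Y}$, using \eqref{LIPJ} together with $J(0)=0$, the weighted bound $\|u(\tau)\|_{L^{2p}(\Omega,m)}\leq 2\kappa\tau^{-b}$ and the growth assumption \eqref{crescita l} (with $l$ taken nondecreasing, the contribution of bounded arguments being harmless), I obtain
$$\|J(u(\tau))\|_{L^2(\Omega,m)}\leq C(2\kappa)^{1+\frac{1-a}{b}}\tau^{-(1-a+b)},\qquad\|J(u(\tau))-J(v(\tau))\|_{L^2(\Omega,m)}\leq C(2\kappa)^{\frac{1-a}{b}}\tau^{-(1-a+b)}d(u,v).$$
Inserting these and the smoothing estimate into $\Phi$, and using that $\int_0^t(t-\tau)^{-a}\tau^{-(1-a+b)}\,\de\tau=t^{-b}B(1-a,a-b)$ is finite exactly because $a<1$ and $b<a$, I get
$$\Bigl\|t^b\!\int_0^t U(t,\tau)J(u(\tau))\,\de\tau\Bigr\|_{L^{2p}(\Omega,m)}\leq C(2\kappa)^{1+\frac{1-a}{b}}B(1-a,a-b),$$
the matching contraction estimate $C(2\kappa)^{\frac{1-a}{b}}B(1-a,a-b)\,d(u,v)$ for the $L^{2p}$-part of $\Phi(u)-\Phi(v)$, and --- integrating $\tau^{-(1-a+b)}$, again integrable on $(0,\overline{T})$ since $b<a$ --- the bounds $C(2\kappa)^{1+\frac{1-a}{b}}\overline{T}^{\,a-b}$ and $C(2\kappa)^{\frac{1-a}{b}}\overline{T}^{\,a-b}d(u,v)$ for the corresponding $L^2$-norms. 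Since $\frac{1-a}{b}>0$, choosing $\kappa$ small makes the constant $C(2\kappa)^{(1-a)/b}B(1-a,a-b)$ strictly less than $\tfrac12$; combined with $\|t^bU(t,0)\phi\|_{L^{2p}(\Omega,m)}<\kappa$ for $\overline{T}$ small (from \eqref{cnidato}) and $\|U(t,0)\phi\|_{L^2(\Omega,m)}\leq\|\phi\|_{L^2(\Omega,m)}$, one checks that $\Phi$ maps $\mathcal{Y}$ into itself (because $\kappa+C(2\kappa)^{1+(1-a)/b}B<2\kappa$) and is a $d$-contraction, shrinking $\overline{T}$ if needed for the $L^2$-terms; along the way one verifies that the integral in \eqref{rappint1} is an honest $L^2$- and $L^{2p}$-valued Bochner integral (measurability from strong continuity of $U(t,\cdot)$ and continuity of $\tau\mapsto J(u(\tau))$, integrable majorant from the estimates above) and that $\Phi(u)\in C([0,\overline{T}],L^2(\Omega,m))\cap C((0,\overline{T}],L^{2p}(\Omega,m))$ with $\Phi(u)(0)=\phi$, which follows from strong continuity of $U$ and dominated convergence as in \cite{weiss1,La-Ve3}.

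The contraction principle then gives a unique $u\in\mathcal{Y}$ with $\Phi(u)=u$, i.e. the mild solution satisfying \eqref{rappint1}, $u(0)=\phi$ and $\|t^bu(t)\|_{L^{2p}(\Omega,m)}<2\kappa$; uniqueness within the class stated in the theorem reduces, by restriction to a common short interval, to uniqueness in $\mathcal{Y}$. I expect the main obstacle to be the first step together with the attendant exponent bookkeeping: one must extract from Theorem \ref{ultracontr} the sharp $L^2\to L^{2p}$ smoothing rate $-a$, and then see that the three conditions $a<1$, $b<a$ and $\frac{1-a}{b}>0$ furnished by \eqref{crescita l} are exactly what makes the Beta integral converge, the multiplicative constant beatable by small $\kappa$, and the self-mapping and contraction properties hold simultaneously; once these exponents line up, the fixed point argument is routine.
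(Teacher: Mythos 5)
Your proposal is correct and follows essentially the same route as the paper: the paper's proof is exactly a Banach fixed point argument for $\mathcal{F}w(t)=U(t,0)\phi+\int_0^t U(t,\tau)J(w(\tau))\,\de\tau$ on the same weighted space $Y$ with the same metric, with the continuity and smallness estimates delegated to \cite{weiss1,weiss3,La-Ve3}, and the $L^2\to L^{2p}$ smoothing bound $\|U(t,\tau)\|_{\Lm(L^2(\Omega,m)\to L^{2p}(\Omega,m))}\leq C(t-\tau)^{-a}$ you derive by Riesz--Thorin is precisely the paper's estimate \eqref{RT}. You have merely written out explicitly the Beta-function exponent bookkeeping (needing $a<1$, $b<a$, $\tfrac{1-a}{b}>0$) that the paper leaves to the cited references, so no substantive difference or gap.
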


\begin{proof}
The proof is based on a contraction mapping argument on suitable spaces of continuous functions with values in Banach spaces. We adapt the proof of Theorem 5.1 in \cite{La-Ve3} to this functional setting; for the reader's convenience, we sketch it.\\
Let $Y$ be the complete metric space defined by
\begin{equation}\label{spazioY}
\begin{split}
Y=&\left\{u\in C([0,\overline{T}],L^2(\Omega,m))\cap C((0,\overline{T}], L^{2p}(\Omega,m))\,:\,u(0)=\phi,\right.\\[2mm]
&\left.\|t^b u(t)\|_{L^{2p}(\Omega,m)}<2\kappa \mbox{ for every} \;t\in [0,\overline{T}]\right\},
\end{split}
\end{equation}
equipped with the metric $$d(u,v)=\max\left\{\|u-v\|_{C([0,\overline{T}],L^2(\Omega,m))},\,\sup_{(0,\overline{T}]}t^b \|u(t)-v(t)\|_{L^{2p}(\Omega,m)}\right\}.$$
For $w\in Y$, let $\mathcal{F}w(t)=U(t,0)\phi +\int_0^t U(t,\tau) J(w(\tau))\,\de\tau$. Then obviously $\mathcal{F}w(0)=\phi$ and, by using arguments similar to those used in \cite[proof of Lemma 2.1]{weiss3}, we can prove that, for $w\in Y$, $\mathcal{F}w\in C([0,\overline{T}],L^2(\Omega,m))\cap C((0,\overline{T}],L^{2p}(\Omega,m))$. Then, by proceeding as in the proof of Theorem 5.1 of \cite{La-Ve3}, we prove that
\begin{equation}\label{tbFu}
\limsup_{t\rightarrow 0^+}\|t^b\mathcal{F}w(t)\|_{L^{2p}(\Omega,m)}<2\kappa \;\mbox{ for every}\; t\in [0,\overline{T}].
\end{equation}
Hence, $\mathcal{F}\colon Y\to Y$ and, by choosing suitably $\overline{T}$ and $\kappa$, we prove that it is a strict contraction.
\end{proof}

\begin{remark}\label{remteoesloc}
If $J(u)= |u|^{p-1} u$, then $\sl{l(r)}= {\mathcal{O}}(r^{p-1})$ when $r\rightarrow+\infty$. Thus condition \eqref{crescita l} is satisfied for $b=\frac{1}{p-1}-\frac{\lambda}{4p}$ with $p>1+\frac{4}{\lambda}$.
\end{remark}

We recall that, from Theorem \ref{theorem2}, we have that $\mathcal{R}(U(t,\tau))\subset D(A(t))$ for every $0<\tau\leq t$ and we can prove that the following regularity result holds (see also \cite[Theorem 5.3]{La-Ve3}).

\begin{theorem}\label{theoregloc}
Let the assumptions of Theorem \ref{theoesloc} hold.
\begin{itemize}
\item[a)] Let also condition \eqref{crescita l} hold. Then, the solution $u(t)$ can be continuously extended to a maximal interval $(0,T_\phi)$  as a solution of \eqref{rappint1}, until $\|u(t)\|_{L^{2p}(\Omega,m)}<\infty$;
\item[b)] one has that $$u\in C([0,T_\phi),L^2(\Omega,m))\cap  C((0,T_\phi), L^{2p}(\Omega,m))\cap C^1((0,T_\phi),L^2(\Omega,m)),$$
$$ Au(t)\in\;C((0,T_\phi);\;L^2(\Omega,m))$$
and $u$ satisfies
\begin{equation*}
\frac{\partial u(t)}{\partial t}=A(t)u(t)+J(u(t)) \quad\mbox{for every}\;\; t \in (0,T_\phi)
\end{equation*}
and $u(0)=\phi$ (that is, $u$ is a classical solution).
\end{itemize}
\end{theorem}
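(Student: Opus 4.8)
The plan is to bootstrap from the local mild solution of Theorem~\ref{theoesloc} to the regularity and maximal-interval statements by a standard continuation/smoothing argument for non-autonomous semilinear equations, using the ultracontractivity and the analytic-type estimates of Theorem~\ref{theorem2} as the workhorses. First I would treat part~(a): given the local solution on $[0,\overline T]$, one restarts the fixed-point scheme of Theorem~\ref{theoesloc} from any later time $t_0\in(0,\overline T]$ with new initial datum $u(t_0)\in L^{2p}(\Omega,m)$ (note $u(t_0)\in L^{2p}$ for $t_0>0$, so the smallness condition \eqref{cnidato} is automatic since $t^bU(t,t_0)u(t_0)\to 0$ in $L^{2p}$ as $t\to t_0^+$), obtaining an extension. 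A Zorn-type / supremum argument over all such extensions yields a maximal existence time $T_\phi$, and the usual blow-up alternative shows the solution continues as long as $\|u(t)\|_{L^{2p}(\Omega,m)}$ stays finite; if $T_\phi<\infty$ and $\limsup_{t\to T_\phi^-}\|u(t)\|_{L^{2p}(\Omega,m)}<\infty$ one could restart past $T_\phi$, a contradiction.

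Next I would prove the smoothing in part~(b), i.e.\ that on $(0,T_\phi)$ the mild solution gains the regularity of a classical solution. The key observation is that for $t$ in any compact subinterval $[\delta,T']\subset(0,T_\phi)$ one has $u(t)\in L^{2p}(\Omega,m)$ with $\sup_{[\delta,T']}\|u(t)\|_{L^{2p}}\le r$ for some $r$, hence by the local Lipschitz bound \eqref{LIPJ} and $J(0)=0$ the map $\tau\mapsto J(u(\tau))$ is bounded in $L^2(\Omega,m)$ on $[\delta,T']$. To upgrade to H\"older continuity of $J(u(\cdot))$ in $L^2$: one shows $u$ is H\"older continuous in $L^{2p}$ on compact subintervals of $(0,T_\phi)$. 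This follows from the integral representation \eqref{rappint1} by splitting $u(t+h)-u(t)$ into the three standard pieces --- $[U(t+h,\tau_0)-U(t,\tau_0)]u(\tau_0)$ for a fixed $\tau_0\in(0,t)$, the tail integral $\int_t^{t+h}U(t+h,\tau)J(u(\tau))\,\de\tau$, and $\int_{\tau_0}^t[U(t+h,\tau)-U(t,\tau)]J(u(\tau))\,\de\tau$ --- and estimating each using properties 3) and 4) of Theorem~\ref{theorem2} together with the ultracontractive/analytic gain from $L^2$ to $L^{2p}$ implicit in the embeddings of Section~\ref{sobolev embed} (interpolating the $L^1\to L^\infty$ bound \eqref{stima ultracontr} with $L^2$-contractivity gives $\|U(t,\tau)\|_{\Lm(L^2\to L^{2p})}\le C(t-\tau)^{-\gamma}$ for an appropriate $\gamma<1$). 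Once $\tau\mapsto J(u(\tau))$ is in $C^{0,\vartheta}([\delta,T'],L^2(\Omega,m))$ for some $\vartheta>0$, Theorem~\ref{TheoremesistenzayagiF} applied on $[\delta,T']$ with datum $u(\delta)$ identifies $u$ with the unique strict solution there: $u\in C^1((\delta,T'],L^2(\Omega,m))$, $A(t)u(t)\in C((\delta,T'],L^2(\Omega,m))$, and $\partial_t u=A(t)u+J(u)$ pointwise. Letting $\delta\downarrow 0$ and $T'\uparrow T_\phi$ (by uniqueness the local pieces agree) gives the assertion on all of $(0,T_\phi)$, while continuity at $0$ in $L^2(\Omega,m)$ with $u(0)=\phi$ is already part of the conclusion of Theorem~\ref{theoesloc}.

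The main obstacle I expect is the H\"older-in-time estimate for $J(u(\cdot))$, specifically controlling $\|u(t+h)-u(t)\|_{L^{2p}(\Omega,m)}$: the singular factors $(t-\tau)^{-\gamma}$ coming from the $L^2\to L^{2p}$ smoothing and the weight $\tau^{-b}$ near $\tau=0$ must be integrated against the H\"older modulus from Theorem~\ref{theorem2}, and one needs the exponents to combine so that all the resulting Beta-type integrals converge --- this is where the hypothesis \eqref{crescita l} (and the relation $a=\tfrac{\lambda}{4}(1-\tfrac1p)$) is used to guarantee the exponent budget closes. A secondary technical point is justifying that the Bochner integral in \eqref{rappint1} may be differentiated under the integral sign and that $A(t)$ may be applied to it, which is exactly the content of Theorem~\ref{TheoremesistenzayagiF} and so is quotable once the H\"older continuity of the nonlinearity is in hand; thus I would organize the proof so that essentially all the real work is concentrated in establishing local-in-time H\"older regularity of $t\mapsto u(t)$ in $L^{2p}(\Omega,m)$, after which parts~(a) and~(b) follow by now-standard non-autonomous semilinear machinery as in \cite{La-Ve3} and \cite{Yagi}.
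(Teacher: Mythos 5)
Your proposal is correct and follows essentially the same route as the paper: part (a) via the Weissler-type continuation/blow-up alternative, and part (b) by proving H\"older continuity of $u$ in $L^{2p}(\Omega,m)$ away from $t=0$ through a splitting of the Duhamel integral, estimated with properties 2)--4) of Theorem \ref{theorem2} and the $L^2\to L^{2p}$ smoothing bound \eqref{RT} obtained from ultracontractivity and Riesz--Thorin, then invoking Theorem \ref{TheoremesistenzayagiF} to upgrade to a classical solution. The only cosmetic difference is that the paper shifts the time origin (working with $w(t)=u(t+\epsilon)$) rather than working directly on compact subintervals, and it chooses the H\"older exponent $\gamma<1-a$ exactly where you note the exponent budget must close.
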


\begin{proof}
For the proof of condition a), we follow \cite[Theorem 2]{weiss1}. From the proof of Theorem \ref{theoesloc}, it turns out that the minimum existence time for the solution to the integral equation is as long as $\|t^b U(t,\tau)\phi\|_{L^{2p}(\Omega,m)}\leq\kappa$ (see also Corollary 2.1. in \cite{weiss1}).

\noindent To prove that the mild solution is classical, we use the classical regularity results for linear equations (see Theorem \ref{TheoremesistenzayagiF}) by proving that $J(u)\in C^{0,\vartheta}((0,T],L^2(\Omega,m))$ for any fixed $T<T_\phi$.

Taking into account the local Lipschitz continuity of $J(u)$, it is enough to show that $u(t)$ is H\"older continuous from $(\epsilon,
T)$ into $L^{2p}(\Omega,m)$ for every $\epsilon>0$. Let $\psi= u(\epsilon)$ and we set $w(t)= U(t,0)\psi +\int_0^t U(t,\tau) J(w(\tau))\,\de\tau$. If we prove that
$$w(t)\in C^0([0,T];\;L^{2p}(\Omega,m))\cap C^1([0,T]), L^2(\Omega,m))$$ and
$$A(t)w\;\in C([0,T];\;L^2(\Omega,m)),$$ then, as $u(t+\epsilon)= w(t)$ due to the uniqueness of the solution of \eqref{rappint1}, we deduce that $$u(t) \in C^1([\epsilon,T+\epsilon);\;L^2(\Omega,m))\cap C([\epsilon,T+\epsilon), L^{2p}(\Omega,m))$$ and
$$A(t)u(t)\in\;C([\epsilon,T+\epsilon);\;L^2(\Omega,m))$$ for every $\epsilon>0$, hence $u(t)$ is a classical solution (see claim b)).\\
Let $\sup_{t\in (0,T)}\|w\|_{L^{2p}(\Omega,m)}\leq r$. Since $U(t,0)$ is differentiable in $(\epsilon, T)$, then it is H\"older continuous for any exponent $\gamma\in (0,1)$.
We now prove that $$v(t)=\int_0^t U(t,\tau) J(w(\tau))\,\de\tau$$ is H\"older continuous too.
Let $0\leq t\leq t+\sigma\leq T$; then
\begin{equation}\notag
\begin{split}
&v(t+\sigma)-v(t)=\int_{0}^{t+\sigma}U(t+\sigma,\tau)J(w(\tau))\,\de\tau-\int_{0}^{t} U(t,\tau)J(w(\tau))\,\de\tau\\[2mm]
&=\int_{0}^{t}(U(t+\sigma,\tau)-U(t,\tau))J(w(\tau))\,\de\tau+\int_{t}^{t+\sigma}U(t+\sigma,\tau)J(w(\tau))\,\de\tau=:v_1(t)+v_2(t).
\end{split}
\end{equation}

For the function $v_1$, for $0<\gamma<1$ it holds that
\begin{equation}\notag
\begin{split}
&\|v_1(t)\|_{L^{2p}(\Omega,m)}\leq\int_{0}^{t}\|(U(t+\sigma,t)U(t,\tau)-U(t,\tau)) J(w(\tau))\|_{L^{2p}(\Omega,m)}\,\de\tau\\[2mm]
&=\int_{0}^{t}\|(U(t+\sigma,t)-\textrm{Id})A(t)^{-\gamma}A(t)^{\gamma}U(t,\tau) J(w(\tau))\|_{L^{2p}(\Omega,m)}\,\de\tau\\[2mm]
&=\int_{0}^{t}\left\|\left(\int_{t}^{t+\sigma}A(\xi)U(\xi,t)A(t)^{-\gamma}\,\de\xi\right) A(t)^{\gamma}U(t,\tau) J(w(\tau))\right\|_{L^{2p}(\Omega,m)}\,\de\tau\\[2mm]
&\leq\int_{0}^{t}\left(\int_{t}^{t+\sigma}\left\|A(\xi)U(\xi,t)A(t)^{-\gamma}\right\|_{\Lm(L^{2p}(\Omega,m))}\,\de\xi\right)\cdot\\[2mm]
&\cdot\left\|A(t)^{\gamma}U\left(t,\frac{\tau+t}{2}\right)U\left(\frac{\tau+t}{2},\tau\right)J(w(\tau))\right\|_{L^{2p}(\Omega,m)}\,\de\tau.
\end{split}
\end{equation}

From the ultracontractivity of $U(t,\tau)$ and the Riesz-Thorin theorem, one has
\begin{equation}\label{RT}
\|U(t,\tau)\|_{\Lm(L^2(\Omega,m)\to L^{2p}(\Omega,m))}\leq C\left((t-\tau)^{-\frac{\lambda}{4}}\right)^{1-\frac{1}{p}},
\end{equation}
where we recall that $\lambda=\frac{2d}{d-N+2s}$ and $C$ is a positive constant depending on $N$, $s$, $d$, $p$ and $\Omega$.\\
Then, taking into account \eqref{RT} and parts 2) and 3) of Theorem \ref{theorem2}, we have
\begin{equation}\notag
\begin{split}
&\|v_1(t)\|_{L^{2p}(\Omega,m)}\leq C\int_{0}^{t}\left(\int_{t}^{t+\sigma}|\xi-t|^{\gamma-1}\,\de\xi\right)\left\|A^\gamma(t)U\left(t,\frac{\tau+t}{2}\right)\right\|_{\Lm(L^{2p}(\Omega,m))}\cdot\\[2mm]
&\cdot\left\|U\left(\frac{\tau+t}{2},\tau\right)J(w(\tau))\right\|_{L^{2p}(\Omega,m)}\de\tau\\[2mm]
&\leq \tilde{C}\int_{0}^{t} \frac{\sigma^\gamma}{\gamma}\left(\frac{t-\tau}{2}\right)^{-\gamma} \left(\frac{t-\tau}{2}\right)^{-\frac{\lambda}{4}\left(1-\frac{1}{p}\right)} \|J(w(\tau))\|_{L^2(\Omega,m)}\,\de\tau\\[2mm]
&\leq \tilde{C}\int_{0}^{t} \frac{\sigma^\gamma}{\gamma} \left(\frac{t-\tau}{2}\right)^{-\gamma}\left(\frac{t-\tau}{2}\right)^{-a} \sl{l(r)} r\,\de\tau,
\end{split}
\end{equation}
where $\tilde{C}$ is a positive constant depending on the constant in \eqref{RT} and $\gamma$. If we choose $\gamma<1-a$, we obtain $\|v_1(t)\|_{L^{2p}(\Omega,m)}\leq C\sigma^\gamma$, for a suitable positive constant $C$ depending also on $T$ and $r$.

As to the function $v_2$, using again \eqref{RT} we have
\begin{equation}\notag
\begin{split}
&\|v_2(t)\|_{L^{2p}(\Omega,m)}\leq \int_t^{t+\sigma}\|U(t+\sigma,\tau)J(w(\tau))\|_{L^{2p}(\Omega,m)}\,\de\tau\\[2mm]
&=\int_{t}^{t+\sigma}\|U(t+\sigma,\tau)\|_{\Lm(L^2(\Omega,m)\to L^{2p}(\Omega,m))}\|J(w(\tau))\|_{L^2(\Omega,m)}\,\de\tau\leq \tilde{C}\frac{\sigma^{1-a}}{1-a} \sl{l(r)} r \leq C\sigma^{1-a},
\end{split}
\end{equation}
for a suitable positive constant $C$ depending on the constant in \eqref{RT}, $r$ and $a$. Therefore, if $\gamma<1-a$, $v(t)$ is H\"older continuous on $[0,T]$ with exponent $\gamma$.
\end{proof}


\subsection{Global existence}

We now give a sufficient condition on the initial datum for obtaining a global solution, by adapting Theorem 3 (b) in \cite{weiss2}.

\begin{theorem}\label{theoesglob}
Let condition \eqref{crescita l} hold. Let $q:=\frac{2\lambda p}{\lambda+4pb}$, $\phi\in L^q(\Omega,m)$ and $\|\phi\|_{L^q(\Omega,m)}$ be sufficiently small. Then there exists $u\in C([0,\infty), L^q(\Omega,m))$ which is a global solution of \eqref{rappint1}.
\end{theorem}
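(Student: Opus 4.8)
The plan is to adapt Weissler's global existence argument (Theorem~3(b) in \cite{weiss2}; see also \cite{weiss1,La-Ve3}) and run a contraction mapping argument for the Duhamel map $\mathcal{F}w(t)=U(t,0)\phi+\int_0^t U(t,\tau)J(w(\tau))\,\de\tau$ on the \emph{whole} half-line $[0,\infty)$, the smallness of $\|\phi\|_{L^q(\Omega,m)}$ now playing the role that the smallness of the time interval played in Theorem~\ref{theoesloc}. The engine is the family of smoothing estimates coming from ultracontractivity: combining Theorem~\ref{ultracontr} (equivalently \eqref{stima ultracontr}) with the contractivity \eqref{Stima Contrazione U} and the Riesz--Thorin theorem, one obtains, for $1\le r\le r'\le\infty$ and $(t,\tau)\in\Xi$,
$$\|U(t,\tau)\|_{\Lm(L^r(\Omega,m)\to L^{r'}(\Omega,m))}\le C\,(t-\tau)^{-\frac{\lambda}{2}\left(\frac1r-\frac1{r'}\right)}.$$
The exponent $q=\frac{2\lambda p}{\lambda+4pb}$ is exactly the one making $\frac{\lambda}{2}\big(\frac1q-\frac1{2p}\big)=b$, so that $\|U(t,0)\phi\|_{L^{2p}(\Omega,m)}\le C\,t^{-b}\|\phi\|_{L^q(\Omega,m)}$; moreover $2\le q<2p$ (the first inequality because $b<a$), $\frac{\lambda}{2}\big(\frac12-\frac1{2p}\big)=a$ and $\frac{\lambda}{2}\big(\frac12-\frac1q\big)=a-b\in(0,1)$, whence $\|U(t,\tau)\|_{\Lm(L^2\to L^{2p})}\le C(t-\tau)^{-a}$ and $\|U(t,\tau)\|_{\Lm(L^2\to L^q)}\le C(t-\tau)^{-(a-b)}$.

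I would then work in the complete metric space $Z$ of those $u\in C([0,\infty),L^q(\Omega,m))\cap C((0,\infty),L^{2p}(\Omega,m))$ with $u(0)=\phi$, $\sup_{t>0}\|u(t)\|_{L^q(\Omega,m)}\le M$, $\sup_{t>0}t^b\|u(t)\|_{L^{2p}(\Omega,m)}\le\rho$ and $\lim_{t\to0^+}t^b\|u(t)\|_{L^{2p}(\Omega,m)}=0$, equipped with $d(u,v)=\sup_{t>0}\|u(t)-v(t)\|_{L^q}+\sup_{t>0}t^b\|u(t)-v(t)\|_{L^{2p}}$, for small parameters $M,\rho>0$ to be fixed. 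For $w\in Z$, \eqref{crescita l} and $J(0)=0$ give $\|J(w(\tau))\|_{L^2}\le C\|w(\tau)\|_{L^{2p}}^{\beta}$ with $\beta:=1+\frac{1-a}{b}>1$ (for $J(u)=|u|^{p-1}u$ this holds with equality and $\beta=p$, cf.\ Remark~\ref{remteoesloc}), hence $\|J(w(\tau))\|_{L^2}\le C\rho^\beta\tau^{-b\beta}$ with $b\beta=b+1-a=1-(a-b)$. Inserting this and the smoothing estimates into $\mathcal{F}w$ and using $\int_0^t(t-\tau)^{-\sigma_1}\tau^{-\sigma_2}\,\de\tau=c(\sigma_1,\sigma_2)\,t^{1-\sigma_1-\sigma_2}$ (finite whenever $\sigma_1,\sigma_2<1$), the decisive point is that the two relevant integrals scale correctly: $\int_0^t(t-\tau)^{-(a-b)}\tau^{-(1-(a-b))}\,\de\tau$ is constant in $t$ (controlling $\|\mathcal{F}w(t)\|_{L^q}$) and $t^b\int_0^t(t-\tau)^{-a}\tau^{-(1-(a-b))}\,\de\tau$ is also constant in $t$ (controlling $t^b\|\mathcal{F}w(t)\|_{L^{2p}}$). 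This yields $\sup_t\|\mathcal{F}w(t)\|_{L^q}\le\|\phi\|_{L^q}+C\rho^\beta$ and $\sup_t t^b\|\mathcal{F}w(t)\|_{L^{2p}}\le C\|\phi\|_{L^q}+C\rho^\beta$, so $\mathcal{F}(Z)\subset Z$ once $\rho$ is small enough that $C\rho^\beta\le\rho/2$ and then $\|\phi\|_{L^q}$ is small enough (fixing $M$ accordingly). For the contraction, the local Lipschitz bound gives $\|J(w(\tau))-J(v(\tau))\|_{L^2}\le C\rho^{(1-a)/b}\tau^{-(1-(a-b))}\,d(w,v)$, and the same two integrals produce $d(\mathcal{F}w,\mathcal{F}v)\le C\rho^{(1-a)/b}\,d(w,v)$, a strict contraction for $\rho$ small since $\frac{1-a}{b}>0$. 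Banach's fixed point theorem then furnishes a unique $u\in Z$ satisfying \eqref{rappint1} for every $t\ge0$, which is the sought global solution in $C([0,\infty),L^q(\Omega,m))$; by the uniqueness in Theorem~\ref{theoesloc} it coincides with the local solution on bounded intervals, so $T_\phi=\infty$, and Theorem~\ref{theoregloc} applies on every $[0,T]$.

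Two auxiliary points must be taken care of. First, membership of $\mathcal{F}w$ in $Z$ requires continuity at $t=0$ with value $\phi$: one uses that $L^{2p}(\Omega,m)$ is dense in $L^q(\Omega,m)$ (as $\Omega$ is bounded and $q<2p$), which gives $\lim_{t\to0^+}t^b\|U(t,0)\phi\|_{L^{2p}}=0$ for every $\phi\in L^q$ by writing $\phi=\psi+(\phi-\psi)$ with $\psi\in L^{2p}$ and applying contractivity to $\psi$; together with the fact that the Duhamel part of $\mathcal{F}w(t)$ is $o(1)$ in $L^q$ and $o(t^{-b})$ in $L^{2p}$ as $t\to0^+$ (again by the above integral estimate, now carrying the vanishing factor $\big(\sup_{\tau\le t}\tau^b\|w(\tau)\|_{L^{2p}}\big)^\beta$), this shows $\mathcal{F}w(t)\to\phi$ in $L^q$ and $\mathcal{F}w\in Z$; in particular \eqref{cnidato} of Theorem~\ref{theoesloc} holds automatically for every $\phi\in L^q$ and every $\kappa>0$. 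Second, the routine verification---as in \cite{weiss1,weiss3,La-Ve3}---that $\mathcal{F}w$ is continuous on $(0,\infty)$ into $L^{2p}$ and that the integral in \eqref{rappint1} is a Bochner integral in both spaces.

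The main obstacle is the exponent bookkeeping: one must check that the choice $q=\frac{2\lambda p}{\lambda+4pb}$, together with $0<b<a$, $0<a<1$ (valid under $N-2s\le\frac d2$ and $p>1$, see the discussion after \eqref{crescita l}) and $0<a-b<1$, makes \emph{every} time integral entering the $L^q$- and weighted-$L^{2p}$-estimates either bounded uniformly in $t$ or of the exact order $t^{-b}$; without this precise balance the a priori estimates cannot be iterated over the whole half-line and only a local-in-time statement survives. A secondary, purely technical difficulty is the continuity of $u$ in $L^q(\Omega,m)$ up to $t=0$, which, unlike in the local theory, cannot exploit any smoothing of $U(t,0)$ as $t\to0^+$ and rests instead on the density of $L^{2p}(\Omega,m)$ in $L^q(\Omega,m)$.
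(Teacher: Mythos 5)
Your proposal is correct in its exponent bookkeeping (indeed $\tfrac{\lambda}{2}(\tfrac1q-\tfrac1{2p})=b$, $\tfrac{\lambda}{2}(\tfrac12-\tfrac1{2p})=a$, $\tfrac{\lambda}{2}(\tfrac12-\tfrac1q)=a-b\in(0,1)$, and both Beta-type integrals scale as $t^0$), but it takes a genuinely different route from the paper. The paper does \emph{not} run a global fixed point argument: it first verifies hypothesis \eqref{cnidato} via the single smoothing estimate $\|U(t,\tau)\|_{\Lm(L^q\to L^{2p})}\leq M(t-\tau)^{-b}$, invokes the local existence Theorem \ref{theoesloc} to get $u$ with $\|t^bu(t)\|_{L^{2p}(\Omega,m)}\leq 2M\|\phi\|_{L^q(\Omega,m)}$, and then closes the argument by an a priori bound plus the continuation criterion of Theorem \ref{theoregloc}~a): setting $f(T)=\sup_{t\in[0,T]}\|t^bu(t)\|_{L^{2p}(\Omega,m)}$, the Duhamel formula yields $f(T)\leq M\|\phi\|_{L^q}+(2M\|\phi\|_{L^q})^{\frac{1-a}{b}}\Lambda BMf(T)$, and a connectedness argument ($f$ continuous, nondecreasing, $f(0)=0$, and $f(T)=2\epsilon$ leads to a contradiction for $\epsilon$ small) shows $f$ stays bounded, hence $T_\phi=\infty$. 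Your direct contraction on the space $Z$ over all of $[0,\infty)$ reaches the same conclusion with the same two kernel estimates and the same smallness mechanism; what it buys is that global existence and uniqueness in $Z$ come out of a single Banach fixed point, at the cost of having to verify membership of $\mathcal{F}w$ in $Z$ globally (in particular the $L^q$-bound of the Duhamel term, which the paper never needs, and the behaviour at $t=0^+$, for which your density argument $L^{2p}\hookrightarrow L^q$ dense is correct but again not needed by the paper, since \eqref{cnidato} only asks for $\limsup<\kappa$). What the paper's route buys is that it reuses the already-proved local theory and its blow-up alternative, so only the single scalar inequality for $f(T)$ has to be established. One shared caveat: both you and the paper apply the bound $l(r)\leq\Lambda r^{\frac{1-a}{b}}$, guaranteed by \eqref{crescita l} only for $r\geq 1$, to all values of $\|u(\tau)\|_{L^{2p}(\Omega,m)}$; this is the standard convention in Weissler's framework and is exact for the model nonlinearity $J(u)=|u|^{p-1}u$.
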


\begin{proof}
Since $q<2p$, as in \eqref{RT} from the ultracontractivity of $U(t,\tau)$ and the Riesz-Thorin theorem it follows that $U(t,\tau)$ is a bounded operator from $L^q(\Omega,m)$ to $L^{2p}(\Omega,m)$ with
$$\|U(t,\tau)\|_{\Lm(L^q(\Omega,m)\to L^{2p}(\Omega,m))}\leq M (t-\tau)^{-\frac{\lambda}{2}\left(\frac{1}{q}-\frac{1}{2p}\right)}\equiv M (t-\tau)^{-b},$$ where $M$ is a positive constant depending $N$, $s$, $d$, $p$, $b$ and $\Omega$. Hence, we have that
$$ \|t^b U(t,0)\phi\|_{L^{2p}(\Omega,m)}\leq M \|\phi\|_{L^q(\Omega,m)};$$ by choosing $\|\phi\|_{L^q(\Omega,m)}$ sufficiently small, from Theorem \ref{theoesloc} we have that there exists a local solution of \eqref{rappint1} $u \in C([0,T], L^q(\Omega,m))$. Furthermore, from Theorem  \ref{theoesloc} we also have that $u \in C((0,T],L^{2p}(\Omega,m))$ and $ \|t^b u(t) \|_{L^{2p}(\Omega,m)}\leq 2M\|\phi\|_{L^q(\Omega,m)}$.

From Theorem \ref{theoregloc} a), if we prove that $\|u(t)\|_{L^{2p}(\Omega,m)}$ is bounded for every $t>0$, then $u(t)$ is a global solution. We will prove that $\|t^b u(t)\|_{L^{2p}(\Omega,m)}$ is bounded for every $t>0$, and we will use the notations of the proof of Theorem \ref{theoesloc}.\\
We choose $\Lambda>0$ such that $l(r)\leq\Lambda r^{\frac{1-a}{b}}$ for $r\geq 1$. Then
\begin{equation}\notag
\begin{split}
&\|t^b u(t)\|_{L^{2p}(\Omega,m)}\leq M\|\phi\|_{L^q(\Omega,m)}+
 t^b \int_0^t \|U(t,\tau)\|_{\Lm(L^2(\Omega,m)\to L^{2p}(\Omega,m))}\|J(u(\tau))\|_{L^2(\Omega,m)}\,\de\tau\\[2mm]
&\leq M\|\phi\|_{L^q(\Omega,m)}+ M\Lambda\left(2M \|\phi\|_{L^q(\Omega,m)}\right)^{\frac{1-a}{b}} t^b \int_0^t (t-\tau)^{-a} \tau^{a-1-b}\|\tau^b u(\tau)\|_{L^{2p}(\Omega,m)}\,\de\tau\\[2mm]
&\leq M\|\phi\|_{L^q(\Omega,m)} +M\Lambda\left(2M \|\phi\|_{L^q(\Omega,m)}\right)^{\frac{1-a}{b}}\sup_{t\in [0,T]} \|t^b u(t)\|_{L^{2p}(\Omega,m)} \int_0^1 (1-\tau)^{-a} \tau^{a-1-b}\,\de\tau.
\end{split}
\end{equation}
We point out that the integral on the right-hand side of the above inequality is finite.
Let now $f(T)=\sup_{t\in [0,T]} \|t^b u(t)\|_{L^{2p}(\Omega,m)}$. Then $f(T)$ is a continuous nondecreasing function with $f(0)=0$ which satisfies
$$f(T)\leq M\|\phi\|_{L^q(\Omega,m)}+\left(2M\|\phi\|_{L^q(\Omega,m)}\right)^{\frac{1-a}{b}} \Lambda BM f(T),$$
where $B:=\int_0^1 (1-\tau)^{-a} \tau^{a-1-b}\,\de\tau>0$. If $M\|\phi\|_{L^q(\Omega,m)}\leq \epsilon$ and $2^{\frac{1-a+b}{b}}\Lambda BM\epsilon^{\frac{1-a}{b}}<1$, then $f(T)$ can never be equal to $2\epsilon$. If it could, we would have $2\epsilon\leq\epsilon+(2\epsilon)^{\frac{1-a+b}{b}}\Lambda BM$, i.e., $\epsilon\leq(2\epsilon)^{\frac{1-a+b}{b}} \Lambda BM$, which is false if $\epsilon>0$ is small enough.

This proves that, for $\|\phi\|_{L^q(\Omega,m)}$ sufficiently small, $\|t^b u(t)\|_{L^{2p}(\Omega,m)}$ must remain bounded and the claim follows.
\end{proof}


\subsection{The strong formulation}

We now give a strong formulation of the abstract Cauchy problem $(P)$ in \eqref{eq:5.1}.

\begin{theorem}\label{esistfrattale} Let $\alpha$ be as defined in \eqref{definizione alpha} and $s\in (0,1)$ be such that $N-d<2s<N$. Let $u$ be the unique solution of problem $(P)$. Then for every fixed $t\in (0,T]$, one has
\begin{equation}\label{pbforte}
\begin{cases}
\frac{\partial u}{\partial t}(t,x)+\B u(t,x)=J(u(t,x)) &\text{for a.e. $x\in\Omega$,}\\[2mm]
\frac{\partial u}{\partial t}+C_s\Ncal u+bu+\Theta^t_\alpha (u)=J(u)\quad &\text{in $(B^{2,2}_\alpha(\partial\Omega))'$},\\[2mm]
u(0,x)=\phi(x) &\text{in $L^2(\Omega,m)$}.
\end{cases}
\end{equation}
\end{theorem}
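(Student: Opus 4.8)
The plan is to unfold the abstract identity $\frac{\partial u}{\partial t}=A(t)u+J(u)$ by means of the correspondence \eqref{corr} between $A(t)$ and the bilinear form $E(t,\cdot,\cdot)$ in \eqref{frattalebilineare}, together with the generalized fractional Green formula of Theorem \ref{greenf}, and to read off the bulk equation and the dynamic boundary condition by testing against two different classes of functions. Throughout I would identify $L^2(\Omega,m)$ with $L^2(\Omega,\La_N)\oplus L^2(\partial\Omega,\mu)$, writing each $w\in L^2(\Omega,m)$ as the pair of its bulk part $w|_\Omega$ and its boundary part $w|_{\partial\Omega}$ (for $w\in H^s(\Omega)$ the latter being the trace of Proposition \ref{teotraccia}), and I would use the continuous embedding $L^2(\partial\Omega,\mu)\hookrightarrow(B^{2,2}_\alpha(\partial\Omega))'$ coming from $B^{2,2}_\alpha(\partial\Omega)\subset L^2(\partial\Omega,\mu)$. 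The starting point is Theorem \ref{theoregloc}, by which $u$ is a classical solution: for every fixed $t\in(0,T]$ one has $u(t)\in D(A(t))\subset H^s(\Omega)$, $A(t)u\in C((0,T];L^2(\Omega,m))$, $u\in C^1((0,T];L^2(\Omega,m))$ and $\frac{\partial u}{\partial t}(t)=A(t)u(t)+J(u(t))$ in $L^2(\Omega,m)$; it remains to identify $A(t)u$ and to split this identity into its two components, the bulk part of $J(u(t))\in L^2(\Omega,m)$ being what figures in the first line of \eqref{pbforte} and its boundary part what figures in the second.

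For the bulk equation I would fix $t\in(0,T]$ and test \eqref{corr} with $v\in\D(\Omega)$, so that $\vv=0$ and, by \eqref{frattalebilineare}, $E(t,u,v)=(u,v)_{s,K}$, while $(-A(t)u,v)_{L^2(\Omega,m)}=\int_\Omega(-A(t)u)\,v\,\de\La_N$. A symmetrization/Fubini computation, using the symmetry of $K(t,\cdot,\cdot)$ and the standard justification of the principal value in \eqref{fracreglap} for $u\in H^s(\Omega)$ and $v\in\D(\Omega)$, identifies $(u,v)_{s,K}$ with the action $\langle\B u,v\rangle_{\D'(\Omega),\D(\Omega)}$ of the distribution $\B u$. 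Hence $\B u=-A(t)u$ in $\D'(\Omega)$; since $A(t)u\in L^2(\Omega,m)$ its bulk part belongs to $L^2(\Omega)$, so $\B u\in L^2(\Omega)$, i.e. $u(t)\in V(\B,\Omega)$ with $\B u=-(A(t)u)|_\Omega$ a.e.\ in $\Omega$. Restricting the classical equation to $\Omega$ then yields $\frac{\partial u}{\partial t}+\B u=J(u)$ for a.e.\ $x\in\Omega$.

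For the dynamic boundary condition I would take $v\in H^s(\Omega)$ arbitrary and combine \eqref{corr}, \eqref{frattalebilineare} and the Green formula \eqref{fracgreen} (now legitimate, since $u(t)\in V(\B,\Omega)$), using $\B u=-(A(t)u)|_\Omega$ to cancel all bulk contributions; what remains is
\[
\int_{\partial\Omega}(-A(t)u)|_{\partial\Omega}\,\vv\,\de\mu=C_s\langle\Ncal u,\vv\rangle+\int_{\partial\Omega}b(t,P)\,\uu\,\vv\,\de\mu+\langle\Theta^t_\alpha(\uu),\vv\rangle
\]
for every $v\in H^s(\Omega)$. Since $\gamma_0\colon H^s(\Omega)\to B^{2,2}_\alpha(\partial\Omega)$ is onto by Proposition \ref{teotraccia}(ii), $\vv$ exhausts $B^{2,2}_\alpha(\partial\Omega)$, whence $(-A(t)u)|_{\partial\Omega}=C_s\Ncal u+b\,\uu+\Theta^t_\alpha(\uu)$ in $(B^{2,2}_\alpha(\partial\Omega))'$. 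Taking the boundary component of the classical equation gives $(-A(t)u)|_{\partial\Omega}=J(u)|_{\partial\Omega}-\frac{\partial u}{\partial t}\big|_{\partial\Omega}$, and combining the two identities produces the second line of \eqref{pbforte}; the initial condition $u(0)=\phi$ in $L^2(\Omega,m)$ is part of the definition of the mild solution ($u\in C([0,T];L^2(\Omega,m))$, see Theorems \ref{theoesloc} and \ref{theoregloc}).

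The step I expect to be the main obstacle is the bulk identification: one must verify carefully that membership in $D(A(t))$ forces $u(t)\in V(\B,\Omega)$ with $\B u=-(A(t)u)|_\Omega$, which rests on the density argument over $\D(\Omega)$, on the symmetrization identity relating the form $(\cdot,\cdot)_{s,K}$ to the distributional regional fractional Laplacian, and on keeping precise track of the decomposition $L^2(\Omega,m)=L^2(\Omega,\La_N)\oplus L^2(\partial\Omega,\mu)$, so that the cancellation of the bulk terms in the boundary step is exact. Once this is secured, passing from the weak boundary identity to an identity in $(B^{2,2}_\alpha(\partial\Omega))'$ is merely surjectivity of the trace, and the time regularity needed to make sense of $\frac{\partial u}{\partial t}$ for each fixed $t$ is already furnished by Theorems \ref{theoregloc} and \ref{TheoremesistenzayagiF}.
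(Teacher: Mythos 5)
Your proposal is correct and follows essentially the same route as the paper: test the abstract equation against $\D(\Omega)$ via \eqref{corr} to identify the bulk equation and deduce $u(t)\in V(\B,\Omega)$, then test against $H^s(\Omega)$ and apply the generalized Green formula \eqref{fracgreen} together with the bulk identity to cancel the interior terms and read off the boundary condition in $(B^{2,2}_\alpha(\partial\Omega))'$. The only cosmetic differences are that you phrase the $L^2$-membership of $\B u$ through $(A(t)u)|_\Omega$ rather than through $J(u)-\partial_t u$ and you state explicitly the surjectivity of the trace, both of which the paper uses implicitly.
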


\begin{proof} For every $t\in (0,T]$, we multiply the first equation of problem $(P)$ by a test function $\varphi\in\D(\Omega)$ and then we integrate on $\Omega$. Then from \eqref{corr} we obtain
\begin{equation}\notag
\begin{split}
\int_\Omega \frac{\partial u}{\partial t}(t,x)\,\varphi(x)\,\de\La_N&=\int_{\Omega} A(t)u(t,x)\,\varphi(x)\,\de\La_N+\int_{\Omega} J(u(t,x))\,\varphi(x)\,\de\La_N\\
&=-E(t,u,\varphi)+\int_{\Omega} J(u(t,x))\,\varphi(x)\,\de\La_N.
\end{split}
\end{equation}
Since $\varphi$ has compact support in $\Omega$, after integrating by parts we get
\begin{equation}\label{forte}
\frac{\partial u}{\partial t}+\B u=J(u)\quad\text{in $(\D(\Omega))'$}.
\end{equation}
By density, equation \eqref{forte} holds in $L^2(\Omega)$, so it holds for a.e. $x\in\Omega$. We remark that, since $J(u(t,\cdot))\in L^2(\Omega,m)$, it also follows that, for each fixed $t\in (0,T]$, $u\in V(\B,\Omega)$. Hence, we can apply Green formula \eqref{fracgreen}.

We now take the scalar product in $L^2(\Omega,m)$ between the first equation of problem $(P)$ and $\varphi\in H^s(\Omega)$. Hence we get
\begin{equation}\label{formvariaz}
\left(\frac{\partial u}{\partial t},\varphi\right)_{L^2(\Omega,m)}=(A(t)u,\varphi)_{L^2(\Omega,m)}+(J(u),\varphi)_{L^2(\Omega,m)}.
\end{equation}
By using again \eqref{corr}, we have that
\begin{equation}\notag
\begin{split}
&\int_\Omega \frac{\partial u}{\partial t}(t,x)\,\varphi\,\de\La_N+\int_{\partial\Omega} \frac{\partial u}{\partial t}(t,x)\,\varphi(x)\,\de\mu\\[2mm]
&=-\frac{C_{N,s}}{2}\iint_{\Omega\times\Omega} K(t,x,y)\frac{(u(t,x)-u(t,y))(\varphi(x)-\varphi(y))}{|x-y|^{N+2s}}\,\de\La_N(x)\de\La_N(y)
\\[2mm]
&-\int_{\partial\Omega} b(t,x)\,u(t,x)\,\varphi(x)\,\de\mu-\langle\Theta^t_\alpha(u),\varphi\rangle+\int_{\Omega}J(u(t,x))\,\varphi(x)\,\de\La_N+\int_{\partial\Omega} J(u(t,x))\,\varphi(x)\,\de\mu.
\end{split}
\end{equation}
Using \eqref{fracgreen} and \eqref{forte}, we obtain for every $\varphi\in H^s(\Omega)$ and for each $t\in (0,T]$
\begin{equation}\label{bordo}
\begin{split}
\int_{\partial\Omega} \frac{\partial u}{\partial t}(t,x)\,\varphi(x)\,\de\mu=&-\left\langle C_s\Ncal u,\varphi\right\rangle-\int_{\partial\Omega} b(t,x)\,u(t,x)\,\varphi(x)\,\de\mu\\[2mm]
&-\langle\Theta^t_\alpha(u),\varphi\rangle+\int_{\partial\Omega} J(u(t,x))\,\varphi(x)\,\de\mu.
\end{split}
\end{equation}
Hence the boundary condition holds in $(B^{2,2}_\alpha(\partial\Omega))'$.
\end{proof}

\vspace{1cm}

\noindent {\bf Acknowledgements.} The authors have been supported by the Gruppo Nazionale per l'Analisi Matematica, la Probabilit\`a e le loro
Applicazioni (GNAMPA) of the Istituto Nazionale di Alta Matematica (INdAM). The authors report there are no competing interests to declare.

\end{document}